\definecolor{DAcolor}{rgb}{0.1,0.1,1}
\definecolor{DAHcolor}{rgb}{0.9,0.1,0.1}
\newcommand{\Tk}{\mathsf{TS}_k}
\newcommand{\Tw}{\mathsf{TS}_2}
\newcommand{\Tkp}{\mathsf{TS}_{k+1}}
\theoremstyle{plain}
\newtheorem{theorem}{Theorem}
\newtheorem{corollary}[theorem]{Corollary}
\newtheorem{lemma}[theorem]{Lemma}
\newtheorem{conjecture}[theorem]{Conjecture}
\newtheorem{proposition}[theorem]{Proposition}
\theoremstyle{definition}
\newtheorem{definition}[theorem]{Definition}
\newtheorem{problem}[theorem]{Problem}
\title{\textbf{A Note On Acyclic Token Sliding Reconfiguration Graphs of Independent Sets}}
\author{David~Avis$^1$ \and Duc~A.~Hoang$^2$}
\date{
	$^1$ Graduate School of Informatics, Kyoto University, Japan\\
	School of Computer Science, McGill University, Canada\\
	\texttt{avis@cs.mcgill.ca}\\
	$^2$ Graduate School of Informatics, Kyoto University, Japan\\ 
	\texttt{hoang.duc.8r@kyoto-u.ac.jp}
}
\begin{document}
\maketitle

\begin{abstract}
We continue the study of token sliding reconfiguration graphs of independent sets initiated
by the authors in an earlier paper (arXiv:2203.16861). Two of the topics in that paper were to
study which graphs $G$ are token sliding graphs and which properties of a graph are
inherited by a token sliding graph. 
In this paper we continue this study specializing on the case of when $G$
and/or its token sliding graph $\mathsf{TS}_k(G)$ is a tree or forest,
where $k$ is the size of the independent sets considered. We consider two problems.
The first is to find necessary and sufficient conditions on $G$ for $\mathsf{TS}_k(G)$ to be a forest.
The second is to find necessary and sufficient conditions for a tree or forest to
be a token sliding graph. For the first problem we give a forbidden
subgraph characterization for the cases of $k=2,3$. For the second problem
we show that for every $k$-ary tree $T$ there is a graph $G$ for which $\mathsf{TS}_{k+1}(G)$
is isomorphic to $T$. A number of other results are given along with a join operation
that aids in the construction of $\mathsf{TS}_k(G)$-graphs.
\end{abstract}

\section{Introduction}
\label{sec:intro}

In a \emph{reconfiguration variant} of a computational problem (e.g., \textsc{Satisfiability}, \textsc{Independent Set}, \textsc{Vertex-Coloring}, etc.), a \emph{transformation rule} that describes an \emph{adjacency relation} between \emph{feasible solutions} (e.g., satisfying truth assignments, independent sets, proper vertex-colorings, etc.) of the problem is given.
One of the main goals is to decide whether there is a sequence of adjacent feasible solutions that ``reconfigures'' one given solution into another.
Another way of looking at these reconfiguration problems is via the so-called \emph{reconfiguration graph}---a graph whose nodes are feasible solutions and two nodes are adjacent if one can be obtained from the other by applying the given rule exactly once.
The mentioned question now becomes deciding whether there is a path between two given nodes in the reconfiguration graph.
Recently, \emph{reconfiguration problems} have been intensively studied from different perspectives~\cite{Heuvel13,MynhardtN19,Nishimura18,BousquetMNS22}.

One of the most well-studied reconfiguration variants of \textsc{Independent Set} is the so-called \textsc{Token Sliding} problem, which was first introduced by Hearn and Demaine~\cite{HearnD05} in 2005.
We refer readers to~\cite{Heuvel13,Nishimura18,BousquetMNS22} and the references therein for more details.
Surprisingly, though \textsc{Token Sliding} has been well-investigated, the realizability and structural properties of its corresponding reconfiguration graph---the one which we will refer to as the \emph{$\mathsf{TS}_k$-graph} (which stands for \emph{Token Sliding (Reconfiguration) graph})---have not been studied until recently~\cite{AvisH22}.
On the other hand, when considering either general vertex subsets, dominating sets, or proper vertex-colorings of a graph as the ``input feasible solutions'', their corresponding reconfiguration graphs have been very well-characterized~\cite{MonroyFHHUW12,MynhardtN19}.

For any graph-theoretic terminology and notation not defined here, we refer readers to~\cite{Diestel2017}.
Given a graph $G = (V, E)$ and an integer $k \geq 2$.
For two sets $X, Y$, we sometimes use $X + Y$ and $X - Y$ to indicate $X \cup Y$ and $X \setminus Y$.
We abbreviate $X \cup \{u\}$ (resp., $X \setminus \{u\}$)
by $X + u$ (resp., $X - u$).
We use $N_G(u)$, or simply just $N(u)$ when the graph $G$ is clear from the context, to denote the \emph{(open) neighbors} of $u$, i.e., set of all vertices in $G$ that are adjacent to $u$.
The \emph{closed neighbors} of $u$, denoted by $N_G[u]$ or simply $N[u]$, is the set $N_G(u) + u$.
The \emph{degree} of $u$, denoted by $\deg_G(u)$, is nothing but the size of $N_G(u)$.
An \emph{independent set} (or \emph{stable set}) of $G$ is a vertex subset $I$ such that for every $u, v \in I$ we have $uv \notin E(G)$.
The \emph{$\mathsf{TS}_k$-graph} of $G$, denoted by $\mathsf{TS}_k(G)$, takes all size-$k$ independent sets of $G$ as its nodes and two nodes $I, J$ are \emph{adjacent (under Token Sliding ($\mathsf{TS}$))} if there exist two vertices $u, v \in V(G)$ such that $I - J = \{u\}$, $J - I = \{v\}$, and $uv \in E(G)$.
Two graphs $G$ and $H$ are \emph{isomorphic}, denoted by $G \simeq H$, if there exists a bijective mapping $f: V(G) \to V(H)$ such that $uv \in E(G)$ if and only if $f(u)f(v) \in E(H)$.
A graph $G$ is called a \emph{$\mathsf{TS}_k$-graph} if there exists a graph $H$ such that $G \simeq \mathsf{TS}_k(H)$.
A \emph{forest} is a graph having no cycles (i.e., it is \emph{acyclic}) and a connected forest is a \emph{tree}.
A \emph{$\mathsf{TS}_k$-tree/forest} is a $\mathsf{TS}_k$-graph which is also a tree/forest.
\figurename~\ref{D132} illustrates a $\mathsf{TS}_2$-tree on six vertices (right).
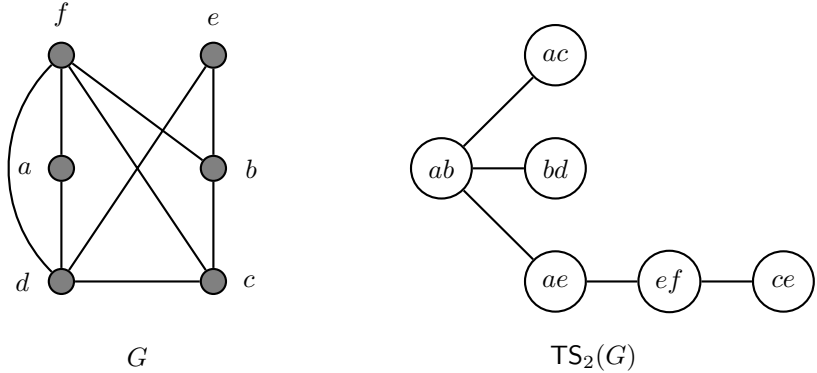
\begin{figure}[!ht]
	\centering
	\begin{tikzpicture}[every node/.style={circle, draw, thick, minimum size=0.8cm}]
		\begin{scope}[shift={(5,0)}]
			\node (v1) at (0,0) {$ab$};
			\node (v2) at (1.5,1.5) {$ac$};
			\node (v3) at (1.5,0) {$bd$};
			\node (v4) at (1.5,-1.5) {$ae$};
			\node (v5) at (3,-1.5) {$ef$};
			\node (v6) at (4.5,-1.5) {$ce$};
			
			\draw[thick] (v1) -- (v2) (v1) -- (v3) (v1) -- (v4) -- (v5) -- (v6);
			
			\node[draw=none] (T) at (2,-2.5) {$ \mathsf{TS}_2(G)$};
		\end{scope}
		\begin{scope}[every node/.style={circle, draw, thick, fill=gray, minimum size=0.3cm}]
			\node[label=left:$a$] (a) at (0,0) {};
			\node[label=right:$b$] (b) at (2,0) {};
			\node[label=right:$c$] (c) at (2,-1.5) {};
			\node[label=left:$d$] (d) at (0,-1.5) {};
			\node[label=above:$e$] (e) at (2,1.5) {};
			\node[label=above:$f$] (f) at (0,1.5) {};
			
			\draw[thick] (b) -- (c) (a) -- (d) (d) -- (c) (e) -- (b) (e) -- (d) (a) -- (f) (b) -- (f) (c) -- (f) (d) edge[bend left=45] (f);
			
			\node[draw=none, fill=none] (G) at (1,-2.5) {$G$};
		\end{scope}
	\end{tikzpicture}
	\caption{A graph $G$ with $\mathsf{TS}_2(G) = D_{1,3,2}$. Each node $ab$ represents a size-$2$ stable set of $G$.}
	\label{D132}
\end{figure}
In~\cite{AvisH22}, the authors studied various properties of the family of $\mathsf{TS}_k$-graphs. For a graph $G$, two of the questions studied were:
\begin{itemize}
	\item[(Q1)] What are necessary and sufficient conditions for $G$ so that $\mathsf{TS}_k(G)$ is a forest?
	\item[(Q2)] What are necessary and sufficient conditions for $G$ to be a $\mathsf{TS}_k$-graph?
\end{itemize}
In this paper, we study these two questions for the case when $G$ is a tree or a forest.

The \emph{union} $G \cup H$ of two (labelled) graphs $G$ and $H$ is the graph with $V(G \cup H) = V(G) \cup V(H)$ and $E(G \cup H) = E(G) \cup E(H)$.
When vertices and edges of $G$ and $H$ are considered distinct regardless of their labels, we say that $G \cup H$ is the \emph{disjoint union} of $G$ and $H$, and write $G + H$ instead of $G \cup H$ to distinguish from their union.
We respectively denote by $K_n$, $P_n$, and $C_n$ the \emph{complete graph}, \emph{path}, and \emph{cycle} on $n$ vertices. 
$K_{m,n}$ ($m \leq n$) is the \emph{complete bipartite graph} whose two partite sets are of sizes $m$ and $n$ respectively.
$K_{1,n}$ is also called a \emph{star}---a tree obtained by attaching $n$ leaves to a central vertex.
A family of graphs that we will use in the sequel generalizes 
stars and paths. 
For fix integers $n,r,s \ge 1$, let $D_{r,n,s}$ be the tree obtained from $P_n$
by appending $r$ leaves at one end and $s$ leaves at the other.
Note that $D_{1,1,s}$ is the star $K_{1,s+1}$ and
$D_{1,n,1}$ is the path $P_{n+2}$. 
\figurename~\ref{D132} illustrates $D_{1,3,2}$ (right).
An \emph{$n$-ary tree} is a rooted tree in which each node has at most $n$ children.
Any tree with maximum degree at most $n+1$ can be rooted at a vertex
with degree at most $n$ (e.g., a leaf) to produce a $n$-ary tree.
In particular, a $2$-ary tree is nothing but the well-known binary tree.

In the next section, we begin by partially answering (Q1) when $G$ is a tree/forest and $k \in \{2,3\}$ and conclude the section by conjecturing for $k \geq 4$.
Then, before addressing (Q2) for some trees/forests, in particular $k$-ary trees and $D_{r,n,s}$, we define an important graph operation which, under certain conditions, can be used for combining two $\mathsf{TS}_k$-graphs by taking their union to obtain a new one.
The final section of the paper gives some concluding remarks.

\section{Results on (Q1)}

\begin{figure}[htpb]
	\centering
	\begin{tikzpicture}
		\matrix (M) [matrix of nodes, 
		nodes in empty cells, 
		ampersand replacement=\&, 
		row sep=-\pgflinewidth, 
		column sep=-\pgflinewidth, 
		every row/.style={nodes={text width=10em, align=center}}, 
		column 1/.style={nodes={draw, rectangle, thick, minimum width=4cm}},
		column 2/.style={nodes={draw, rectangle, thick, minimum width=5.2cm}},
		column 3/.style={nodes={draw, rectangle, thick, minimum width=4.7cm}},
		row 1/.style={nodes={text height=0.3cm}},
		row 6/.style={nodes={text height=2.1cm}},
		row 8/.style={nodes={text height=2.1cm}},
		row 9/.style={nodes={text height=2.8cm}},
		row 10/.style={nodes={text height=2.1cm}},
		row 11/.style={nodes={text height=2.1cm}}
		] {
			$G$\vphantom{$\mathsf{TS}_2(G)$} 
			\& $\mathsf{TS}_2(G)$\\
			\tikz[every node/.style={draw, circle, thick, fill=white, minimum width=1em}]{
				\foreach \i/\x/\y in {0/0/0,1/1/0,2/0/-1,3/1/-1} {
					\node (\i) at (\x, \y) {};
				}
				\draw[thick] (0) -- (2) (1) -- (3);
			} 
			\& \tikz[every node/.style={draw, circle, thick, fill=gray, minimum width=1em}]{
				\foreach \i/\x/\y in {01/0/0,12/0/-1,23/1/-1,03/1/0} {
					\node (\i) at (\x, \y) {};
				}
				\draw[thick] (01) -- (12) -- (23) -- (03) -- (01);
			}\\
			\tikz[every node/.style={draw, circle, thick, fill=white, minimum width=1em}]{
				\foreach \i/\x/\y in {0/0/0,1/1/0,2/0/-1,3/1/-1} {
					\node (\i) at (\x, \y) {};
				}
				\draw[thick] (0) -- (2) -- (3) -- (0);
			} 
			\& \tikz[every node/.style={draw, circle, thick, fill=gray, minimum width=1em}]{
				\foreach \i/\x/\y in {01/0/0,12/0/-1,13/1/-1} {
					\node (\i) at (\x, \y) {};
				}
				\draw[thick] (01) -- (12) -- (13) -- (01);
			}\\
			\tikz[every node/.style={draw, circle, thick, fill=white, minimum width=1em}]{
				\foreach \i/\x/\y in {0/0/0,1/1/-1,2/2/-0.5,3/1/0,4/0/-1} {
					\node (\i) at (\x, \y) {};
				}
				\draw[thick] (0) -- (3) -- (1) -- (4) -- (0);
			}
			\& \tikz[every node/.style={draw, circle, thick, fill=gray, minimum width=1em}]{
				\foreach \i/\x/\y in {02/0/0,24/1/0,12/1/-1,23/0/-1,01/2/0,34/2/-1} {
					\node (\i) at (\x, \y) {};
				}
				\draw[thick] (02) -- (24) -- (12) -- (23) -- (02);
			}\\
			\tikz[every node/.style={draw, circle, thick, fill=white, minimum width=1em}]{
				\foreach \i/\x/\y in {0/0/0,1/0/-1,2/3/0,3/3/-1,4/1/-0.5,5/2/-0.5} {
					\node (\i) at (\x, \y) {};
				}
				\draw[thick] (4) -- (0) (4) -- (1) (4) -- (5) (5) -- (2) (5) -- (3);
			}
			\& \tikz[every node/.style={draw, circle, thick, fill=gray, minimum width=1em}]{
				\foreach \i/\x/\y in {02/0/0,24/1/0,12/2/0,15/3/0,13/3/-1,34/2/-1,03/1/-1,05/0/-1,01/4/0,23/4/-1} {
					\node (\i) at (\x, \y) {};
				}
				\draw[thick] (02) -- (24) -- (12) -- (15) -- (13) -- (34) -- (03) -- (05) -- (02);
			}\\
			\tikz[baseline=-1.5cm,every node/.style={draw, circle, thick, fill=white, minimum width=1em}]{
				\foreach \i/\x/\y in {0/0/-0.5,5/1/0,4/1/-1,3/-1/-0.5,2/2/0,1/2/-1} {
					\node (\i) at (\x, \y) {};
				}
				\draw[thick] (0) -- (5) -- (4) -- (0) (0) -- (3) (5) -- (2) (4) -- (1);
			}
			\& \tikz[baseline=-1.5cm,every node/.style={draw, circle, thick, fill=gray, minimum width=1em}]{
				\foreach \i/\x/\y in {01/0/0,13/1/0,34/2/0,35/3/0,23/4/-0.5,02/3/-1,24/2/-1,12/1/-1,15/0/-1} {
					\node (\i) at (\x, \y) {};
				}
				\draw[thick] (01) -- (13) -- (34) -- (35) -- (23) -- (02) -- (24) -- (12) -- (15) -- (01);
			}\\
			\tikz[every node/.style={draw, circle, thick, fill=white, minimum width=1em}]{
				\foreach \i/\x/\y in {0/0/0,4/1/0,2/2/0,1/2/-1,5/1/-1,3/0/-1} {
					\node (\i) at (\x, \y) {};
				}
				\draw[thick] (1) -- (5) -- (3) -- (0) -- (4) -- (2) (4) -- (5) (2) -- (5) (1) -- (4);
			}
			\& \tikz[every node/.style={draw, circle, thick, fill=gray, minimum width=1em}]{
				\foreach \i/\x/\y in {01/0/0,05/1/0,02/2/0,23/2/-1,34/1/-1,13/0/-1,12/3/-0.5} {
					\node (\i) at (\x, \y) {};
				}
				\draw[thick] (01) -- (05) -- (02) -- (23) -- (34) -- (13) -- (01) ;
			}\\
			\tikz[every node/.style={draw, circle, thick, fill=white, minimum width=1em}]{
				\foreach \i/\x/\y in {0/0/0,3/1/0,1/2/0,5/2/-1,2/1/-1.7,4/0/-1} {
					\node (\i) at (\x, \y) {};
				}
				\draw[thick] (0) -- (3) -- (1) -- (5) -- (2) -- (4) -- (0) (3) -- (4) -- (5) -- (3);
			}
			\& \tikz[baseline=-1.5cm,every node/.style={draw, circle, thick, fill=gray, minimum width=1em}]{
				\foreach \i/\x/\y in {01/0/0,05/1/0,02/2/0,23/2/-1,12/1/-1,14/0/-1} {
					\node (\i) at (\x, \y) {};
				}
				\draw[thick] (01) -- (05) -- (02) -- (23) -- (12) -- (14) -- (01);
			}\\
			\tikz[baseline=-0.7cm,every node/.style={draw, circle, thick, fill=white, minimum width=1em}]{
				\foreach \i/\x/\y in {0/0/0,4/1/0,6/1/1,5/0/1,1/0.5/1.7,3/2/1,2/-1/1} {
					\node (\i) at (\x, \y) {};
				}
				\draw[thick] (0) -- (5) -- (1) -- (6) -- (4) -- (0) (2) -- (5) -- (6) -- (3);
			}
			\& \tikz[every node/.style={draw, circle, thick, fill=gray, minimum width=1em}]{
				\foreach \i/\x/\y in {01/0/0,14/1/0,45/2/0,24/3/0,26/4/-0.5,23/3/-1,35/2/-1,03/1/-1,06/0/-1,02/3/0.7,12/4/0.3,13/2/-1.7,34/1/-1.7} {
					\node (\i) at (\x, \y) {};
				}
				\draw[thick] (01) -- (14) -- (45) -- (24) -- (26) -- (23) -- (35) -- (03) -- (06) -- (01) (24) -- (02) (26) -- (12) (35) -- (13) (03) -- (34);
			}\\
			\tikz[every node/.style={draw, circle, thick, fill=white, minimum width=1em}]{
				\foreach \i/\x/\y in {5/0/0,1/1/0,4/2/0.5,0/1/1,3/0.5/1.7,6/0/1,2/-1/0.5} {
					\node (\i) at (\x, \y) {};
				}
				\draw[thick] (5) -- (1) -- (4) -- (0) -- (3) -- (6) -- (2) -- (5) (0) -- (5) (0) -- (6) (1) -- (6);
			}
			\& \tikz[baseline=-0.6cm,every node/.style={draw, circle, thick, fill=gray, minimum width=1em}]{
				\foreach \i/\x/\y in {13/0/0,34/1/0,46/2/0,24/3/0.5,02/2/1,23/1/1,35/0/1,01/-1/0,12/3/1.2,45/3/-0.2,56/-1/1} {
					\node (\i) at (\x, \y) {};
				}
				\draw[thick] (13) -- (34) -- (46) -- (24) -- (02) -- (23) -- (35) -- (13) (13) -- (01) (24) -- (12) (24) -- (45) (35) -- (56);
			}\\
			\tikz[every node/.style={draw, circle, thick, fill=white, minimum width=1em}]{
				\foreach \i/\x/\y in {4/0/0,0/1/0,3/2/0.5,5/1/1,2/0.5/1.7,6/0/1,1/-1/0.5} {
					\node (\i) at (\x, \y) {};
				}
				\draw[thick] (0) -- (3) -- (5) -- (2) -- (6) -- (1) -- (4) -- (0) (0) -- (5) (1) -- (5) (3) -- (5) (3) -- (6) (5) -- (6) (4) -- (6);
			}
			\& \tikz[baseline=-0.6cm,every node/.style={draw, circle, thick, fill=gray, minimum width=1em}]{
				\foreach \i/\x/\y in {02/0/0,06/1/0,01/2/0,13/3/0.5,34/2/1,45/1/1,24/0/1,23/-1/0,12/-1/1} {
					\node (\i) at (\x, \y) {};
				}
				\draw[thick] (02) -- (06) -- (01) -- (13) -- (34) -- (45) -- (24) -- (02) (02) -- (23) (24) -- (12);
			}\\
		};
	\end{tikzpicture}
	\caption{A list $\mathcal{G}$ of $n$-vertex graphs $G$ ($4 \leq n \leq 7$) excluding $\overline{C_n}$ ($n \geq 5$) such that if $\mathsf{TS}_2(G^\prime)$ has no cycle then $G^\prime$ does not contain any member $G$ of $\mathcal{G}$ as an induced subgraph.}%
	\label{fig:acyclicForbiddenSubgraph}
\end{figure}

In this section, we prove the necessary and sufficient conditions on a tree/forest $G$ such that $\mathsf{TS}_k(G)$ is acyclic for $k \in \{2,3\}$, partially answering (Q1).

We begin with some definitions and observations.
The \emph{complement} $\overline{G}$ of a graph $G$ is the graph with $V(\overline{G}) = V(G)$ and $E(\overline{G}) = \{uv: uv \notin E(G)\}$. 
The \emph{size-$m$ matching}, denoted by $mK_2$, is the graph obtained by taking the disjoint union of $m$ copies of $K_2$. Observe that $\mathsf{TS}_2(2K_2) \simeq C_4$.
We label vertices in a $D_{r,n,s}$ ($r, s \geq 1$) as follows: 
Vertices of $P_n$ are labelled $p_1, \dots, p_n$.
The $r$ leaves attached to $p_1$ are $u_1, \dots, u_r$ and the $s$ leaves attached to $p_n$ are $v_1, \dots, v_n$.
$D_{2,2,2}$ is shaped like an \textbf{H} and $\mathsf{TS}_2(D_{2,2,2})$ contains a cycle
$C_8$ whose vertex-set is $\{u_1v_1, u_1p_2, u_1v_2, p_1v_2, u_2v_2, u_2p_2, u_2v_1, p_1v_1\}$.
Indeed, respectively from Lemma~1 of \cite{AvisH22} and~\figurename~\ref{fig:acyclicForbiddenSubgraph}, if a $n$-vertex graph $G$ is either $\overline{C_n}$ ($n \geq 5$) or a graph in the list $\mathcal{G}$ described in~\figurename~\ref{fig:acyclicForbiddenSubgraph} (which includes $2K_2$ and $D_{2,2,2}$), the graph $\mathsf{TS}_2(G)$ contains a cycle.
Additionally, we have:

\begin{lemma}
\label{db}

\begin{itemize}
\item[(a)] 
For $k \ge 2$, $\mathsf{TS}_k(2K_2+nK_1)$ contains a cycle $C_4$ if $n \ge k-2$ 
otherwise it is acyclic.
\item[(b)] 
	For $k \in \{2,3\}$, $s \ge 1$, $\mathsf{TS}_k(D_{1,n,s})$ contains a cycle $C_{4}$ if $n \ge 2k-1$
otherwise it is acyclic.
\item[(c)] 
For $k \in \{2,3\}$ and $r,s \ge 2$, $\mathsf{TS}_k(D_{r,n,s})$ contains a cycle $C_8$ if $n \ge 2k-2$
otherwise it is acyclic.
\end{itemize}
\end{lemma}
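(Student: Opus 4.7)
The plan is to prove each of the three parts by a two-sided argument: exhibit an explicit cycle when the stated threshold on $n$ is attained, and prove acyclicity when it is not. In every case the cycle construction follows the same template: find one or two slide sequences in $G$ that commute as token movements, so that the corresponding sets form the desired cycle in $\mathsf{TS}_k$; any remaining tokens are parked on ``safe'' positions that obstruct none of those slides.

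For (a), if $n \geq k-2$ I would pad the four vertices of $\mathsf{TS}_2(2K_2) \simeq C_4$ by any fixed $(k-2)$-subset $T$ of the isolated vertices, obtaining a $C_4$ in $\mathsf{TS}_k(2K_2+nK_1)$. If $n \leq k-3$, the maximum independent set of $2K_2+nK_1$ has size at most $n+2 \leq k-1 < k$, so $\mathsf{TS}_k$ is empty and trivially acyclic.

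For (b), when $n \geq 2k-1$ I would exhibit a $C_4$ by sliding one token along the edge $u_1 p_1$ and another along the edge $p_n v_1$. For $k=2$ this is $\{u_1, p_n\} - \{p_1, p_n\} - \{p_1, v_1\} - \{u_1, v_1\} - \{u_1, p_n\}$; independence of $\{p_1, p_n\}$ needs $n \geq 3$. For $k=3$ I would park a third token at $p_3$ and run the analogous $4$-cycle on the two moving tokens, noting that $p_3 p_n$ is a non-edge precisely when $n \geq 5 = 2k-1$. For (c) with $k=2$ and $n \geq 2$, the $C_8$ template is the one already given for $D_{2,2,2}$ in the paragraph preceding the lemma, with $p_2$ replaced by $p_n$ in the four sets that contain $p_2$; each of the eight slides still goes along an edge of $D_{r,n,s}$. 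For $k=3$ and $n \geq 2k-2=4$, the same $8$-cycle template works once one can park a third token on a path vertex that is non-adjacent to all of $u_1,u_2,v_1,v_2,p_1,p_n$; this is immediate for $n \geq 5$ (take $p_3$), and at the boundary $n=4$ a small variant (with the third token sitting on a suitable path vertex of the augmented construction) is required.

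The acyclicity direction is where the real work lies. Since only finitely many subcritical values of $n$ need to be treated in parts (b) and (c), the plan is to enumerate, exploiting the left-right symmetry of $D_{r,n,s}$, all size-$k$ independent sets and verify case by case that every component of $\mathsf{TS}_k$ is a tree. I expect this bookkeeping, especially for $k=3$ in part (c), to be the main obstacle, since the number of types of size-$3$ independent sets grows and many adjacencies must be checked. The structural principle guiding the verification is that, once $n$ is below the threshold, the central path is too short to admit two disjoint token slides that do not obstruct one another: any attempted closed sequence of slides must retrace an edge, so each connected component of $\mathsf{TS}_k$ reduces to a tree (or isolated vertex).
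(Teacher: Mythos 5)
Your overall strategy (explicit cycles at the threshold, case analysis below it) is the same as the paper's, and parts (a), (b), and the $k=2$ case of (c) are essentially identical to what the paper does: the paper realizes the $C_4$ of (b) by exhibiting an induced $2K_2+(k-2)K_1$ inside $D_{1,n,s}$ and invoking (a), which produces exactly the four sets you write down. However, there is a genuine gap at the boundary case of part (c) with $k=3$ and $n=4$, which you explicitly defer to ``a small variant.'' No such variant of your template exists: in $D_{2,4,2}$ every internal path vertex is adjacent to $p_1$ or to $p_4$, so a third token can never be parked while the other two execute the eight slides, and in fact $\mathsf{TS}_3(D_{2,4,2})$ contains \emph{no} $C_8$ at all. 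A full enumeration of its $22$ size-$3$ independent sets shows it decomposes into two $5$-vertex trees (the components of $\{u_1,u_2,\cdot\}$ and $\{\cdot,v_1,v_2\}$) together with a single induced $C_{12}$, namely
\[
\{u_1,p_2,v_1\},\{u_1,p_2,p_4\},\{u_1,p_2,v_2\},\{u_1,p_3,v_2\},\{p_1,p_3,v_2\},\{u_2,p_3,v_2\},
\]
\[
\{u_2,p_2,v_2\},\{u_2,p_2,p_4\},\{u_2,p_2,v_1\},\{u_2,p_3,v_1\},\{p_1,p_3,v_1\},\{u_1,p_3,v_1\},
\]
every node of which has degree exactly two. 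So at $n=2k-2$ with $k=3$ all three tokens must move, and the cycle you need to exhibit is this $C_{12}$, not a $C_8$. (This also means the lemma's literal claim of a $C_8$, and the paper's own construction --- which for $n=4$ would have to place $I=\{p_2\}$ or $\{p_3\}$ into sets such as $I+p_1+v_1$ or $I+u_1+p_n$ that are then not independent --- is off at this boundary; the acyclic/cyclic dichotomy survives, but only via the longer cycle.)

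Two smaller points. First, your $k=3$ parking argument for (b) and for (c) with $n\ge 5$ is fine as written ($p_3$ works), and matches the paper's choice of $I$ from the interior of the path. Second, for the acyclicity direction your plan to ``enumerate all size-$k$ independent sets'' over the finitely many subcritical $n$ is not literally a finite check, because $r$ and $s$ are unbounded; you need the paper's organizing principle of casing on how many tokens sit on the central path $u_1p_1\dots p_n$ and treating the pendant leaves at each end as interchangeable, after which each component is seen to be an isolated vertex, a path, or a star. Both your sketch and the paper leave this verification at the same level of detail.
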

\begin{proof}
\begin{itemize}
	\item[(a)] If $n < k - 2$, there is no size-$k$ independent set in $2K_2 + nK_1$, thus its $\mathsf{TS}_k$-graph is obviously acyclic.
	Otherwise, let $I \subseteq V(nK_1)$ be an arbitrary independent set of size $k-2$, and let $E(2K_2) = \{ab, cd\}$. 
	Then, $\{I+a+c, I+a+d, I+b+c, I+b+d\}$ induce a $C_4$ in $\mathsf{TS}_k(2K_2 + nK_1)$.
	
	\item[(b)] Observe that if $n \geq 2k - 1$, $D_{1,n,s}$ contains an induced $2K_2 + (k-2)K_1$, which can be obtained by taking $u_1p_1$ and $p_nv_1$ as edges of $2K_2$ and the remaining $k-2$ independent vertices from the path $D_{1,n,s} - \{u_1, p_1, p_2, p_{n-1}, p_n, v_1, \dots, v_s\}$ on $n - 4$ vertices.
	(Since $n \geq 2k - 1$, this path has an independent set of size at least $\lceil (n-4)/2 \rceil \geq \lceil (2k - 5)/2 \rceil = k - 2$.)
	Then, using a similar argument as in (a) we have $\mathsf{TS}_k(D_{1,n,s})$ contains a $C_4$.  
	
	On the other hand, if $n \leq 2k - 2$ for $k \in \{2,3\}$, since $D_{1,n-1,s}$ is always an induced subgraph of $D_{1,n,s}$ for $n \geq 2$, it follows that if $\mathsf{TS}_2(D_{1,n-1,s})$ has a cycle then so is $\mathsf{TS}_2(D_{1,n,s})$.
	Therefore, it suffices to show that $\mathsf{TS}_k(D_{1,2k-2,s})$ is acyclic for $k \in \{2,3\}$.
	Indeed, based on the number of tokens placed on the path $u_1p_1\dots p_n$ (which is at most three), one can verify that each component of $\mathsf{TS}_k(D_{1,2k-2,s})$ is either an isolated vertex, a path, or a star.
	
	\item[(c)] Observe that if $n \geq 2k - 2$, $D_{r,n,s}$ contains the independent sets $I + u_1 + v_1$, $I + u_1 + p_n$, $I + u_1 + v_s$, $I + p_1 + v_1$, $I + p_1 + v_s$, $I + u_r + v_1$, $I + u_r + p_n$, and $I + u_r + v_s$, where $I = \emptyset$ when $n = 2$ and otherwise $I$ is an independent set of the path $p_2\dots p_{n-1}$ of size $k - 2$.
	(Note that $p_2\dots p_{n-1}$ has an independent set of size at most $\lceil (n-2)/2 \rceil \geq k - 2$.)
	They indeed induce a $C_8$ in $\mathsf{TS}_k(D_{r,n,s})$.
	
	On the other hand, if $n \leq 2k - 3$ for $k \in \{2,3\}$, using a similar case-analysis as in (b), one can verify that each component of $\mathsf{TS}_k(D_{r,n,s})$ is either an isolated vertex, a path, or a star, and therefore it is acyclic.
\end{itemize}
\end{proof}

We are now ready to show the necessary and sufficient conditions for a tree/forest $G$ such that $\mathsf{TS}_k(G)$ is acyclic, where $k \in \{2,3\}$.

\begin{proposition}
\label{keq2}
        Let $T$ be a tree.
        Then $\mathsf{TS}_2(T)$ is acyclic if and only if $T$ is $\{2K_2, D_{2,2,2}\}$-free.
\end{proposition}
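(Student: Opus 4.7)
My plan is to prove both implications separately, with the forward direction being an immediate consequence of induced-subgraph monotonicity and the backward direction reducing to a short classification of $2K_2$-free trees.

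For the necessity direction, I would first record the routine fact that $\Tk$ behaves monotonically under induced subgraphs: if $H$ is an induced subgraph of $G$, then $\Tk(H)$ is an induced subgraph of $\Tk(G)$, since a size-$k$ independent set of $H$ is also one of $G$, and any TS-slide between two such sets uses an edge whose endpoints lie in $V(H)$ and hence (by inducedness) in $E(H)$. Combined with the introductory observation that $\Tw(2K_2) \simeq C_4$ and the explicit $C_8$ exhibited for $\Tw(D_{2,2,2})$ earlier in the section, this at once shows that if $T$ contains $2K_2$ or $D_{2,2,2}$ as an induced subgraph, then $\Tw(T)$ contains a cycle.

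For sufficiency, the key structural step is a diameter bound on $2K_2$-free trees. I would argue that such a tree $T$ has diameter at most $3$: if there were a shortest path $v_0v_1v_2v_3v_4$ of length $4$, then since $T$ is acyclic no chord exists among the $v_i$'s (any chord $v_iv_j$ with $|i-j|\geq 2$ would close a cycle), so the four vertices $\{v_0,v_1,v_3,v_4\}$ induce exactly the two disjoint edges $v_0v_1$ and $v_3v_4$, giving an induced $2K_2$. Trees of diameter at most $3$ are therefore either (i) a star $K_{1,n}$ (including the degenerate cases $K_1$, $K_2$) or (ii) a double star $D_{r,2,s}$ with $r,s \geq 1$. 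In case (ii), excluding $D_{2,2,2}$ forces $\min(r,s) = 1$, leaving $T \simeq D_{1,2,s}$ for some $s \geq 1$ (up to the obvious symmetry $D_{r,2,s} \simeq D_{s,2,r}$).

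It will then remain to verify that $\Tw(T)$ is acyclic in each of these two families. For a star $K_{1,n}$, the only size-$2$ independent sets are pairs of leaves, and the unique neighbour of any leaf is the center, which is adjacent to every other leaf; hence no TS-slide is available and $\Tw(K_{1,n})$ is edgeless. For $T \simeq D_{1,2,s}$, Lemma~\ref{db}(b) applied with $k=2$ and $n = 2 < 2k-1 = 3$ gives acyclicity of $\Tw(D_{1,2,s})$ directly. The only place where real work occurs is the diameter bound, and since that step is short and standard, I do not anticipate any genuine obstacle.
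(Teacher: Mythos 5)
Your proof is correct, but the sufficiency direction takes a genuinely different route from the paper's. The paper proves the contrapositive by an incremental ``growing'' argument: starting from the closed neighbourhood of a maximum-degree vertex $a$, it repeatedly adds edges of $T$, invoking Lemma~\ref{db} to certify that the partial tree still has an acyclic $\Tw$-graph, until a forbidden induced $2K_2$ or $D_{2,2,2}$ is forced. You instead classify the $\{2K_2,D_{2,2,2}\}$-free trees outright: $2K_2$-freeness bounds the diameter by $3$ (your chord argument on an induced $P_5$ is valid, since in a tree any chord of a path closes a cycle), so $T$ is a star or a double star $D_{r,2,s}$, and excluding $D_{2,2,2}$ forces $\min(r,s)=1$; acyclicity of $\Tw(K_{1,n})$ and $\Tw(D_{1,2,s})$ is then checked directly (the latter via Lemma~\ref{db}(b), exactly as the paper would). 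Your necessity direction is essentially the paper's, with the useful addition of an explicit statement and proof of induced-subgraph monotonicity of $\Tk$, which the paper leaves implicit. What your approach buys is a shorter, cleaner argument that delivers the paper's subsequent corollary (the explicit list $K_{1,s}$, $D_{1,2,s}$) as an immediate byproduct rather than by re-reading the proof as an algorithm; what the paper's approach buys is a template that transfers with only local modifications to the $k=3$ case (Proposition~\ref{keq3}), where a one-line diameter bound no longer suffices to classify the forbidden-subgraph-free trees.
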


\begin{proof}

\begin{itemize}
            \item[($\Rightarrow$)]
 Suppose to the contrary that either $2K_2$ or $D_{2,2,2}$ is an induced subgraph of $T$.
 In the first case it follows from the discussion above that $\mathsf{TS}_2(T)$
contains a $C_4$ and in the second case that it
contains a $C_8$.

                \item[($\Leftarrow$)]
We assume that $\mathsf{TS}_2(T)$ contains a cycle and show that it must contain
one of the two forbidden subgraphs.
Firstly, suppose that $T$ is a path $P_n$.
Since $\mathsf{TS}_2(T)$ contains a cycle,
it follows from Lemma~\ref{db}(b) that $n \ge 5$ and so $T$ contains an induced $2K_2$.

We now assume $T$ has a vertex of at least degree 3.
We will construct a copy $T^\prime$ of $T$ by initially choosing a vertex $a$ of 
maximum degree in $T$ and letting $T^\prime = N[a]$. Note that $\mathsf{TS}_2(T^\prime)$ is acyclic.
We add edges from $T$ to $T^\prime$ and show after each addition
that either $T^\prime$ contains a forbidden
subgraph, so we are done, or that $\mathsf{TS}_2(T^\prime)$ remains acyclic so that $T \neq T^\prime$.

Let $b$ be a child of $a$ of highest degree,
$c$ be a child of next highest degree,
and $d$ be any other child. 
Since $\mathsf{TS}_2(T^\prime)$ is acyclic $T \neq T^\prime$ and
$b$ must have $r \ge 1$ children. 
Let $e$ be a child of $b$ with maximum degree. We add $N[b]$ to $T^\prime$
obtaining a 
copy of $D_{r,2,s}$, where $ s=\deg_T(a)-1 \ge 2$.
If $r \ge 2$, we have the required forbidden induced subgraph.
If $r = 1$ then 
by Lemma~\ref{db}(b) 
$\mathsf{TS}_2(T^\prime)$ is acyclic, so there must be extra edges to add to $T^\prime$.
If $c$ has a child $y$ then $\{b,c,e,y\}$ induce a $2K_2$.
Otherwise, $e$ must have at least one child $g$. 
Adding $eg$ to $T^\prime$ we obtain $2K_2$ as an induced subgraph on $\{a,d,e,g\}$.
This completes the proof.
\end{itemize}
\begin{figure}[htpb]
	\centering
	\includegraphics[width=0.6\textwidth]{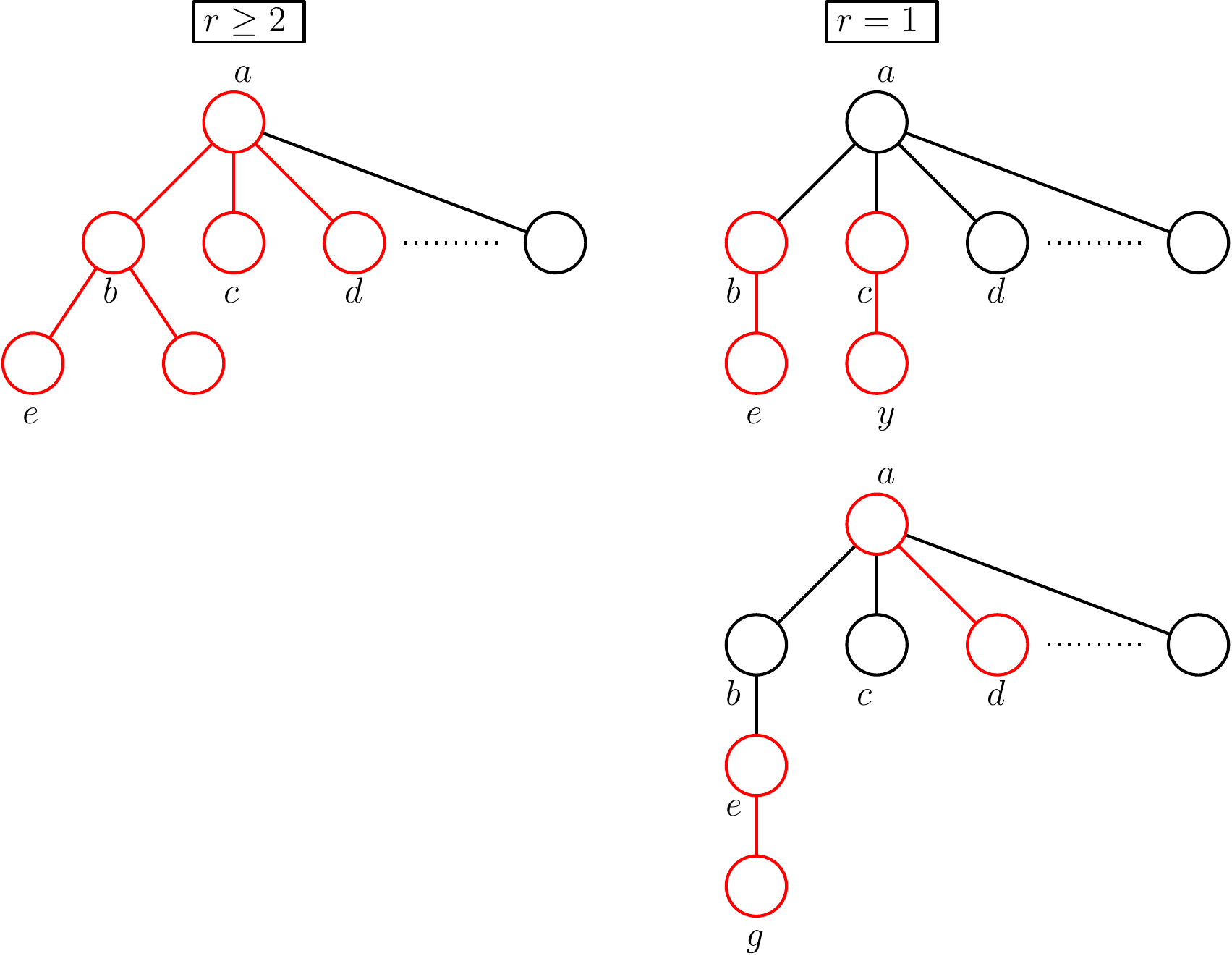}
	\caption{Illustration for Proposition~\ref{keq2}: Some trees $T^\prime$ containing $N[b]$ whose $\mathsf{TS}_2$-graphs have a cycle. Here $r$ is the number of children of $b$. Copies of $2K_2$ and $D_{2,2,2}$ are marked by red color.}
\end{figure}
\end{proof}

\begin{corollary}
Let $T$ be a tree.
Then $\mathsf{TS}_2(T)$ is acyclic if and only if $T$ is either 
$K_{1,s}$
 or $D_{1,2,s}$ for some positive integer $s$. 
\end{corollary}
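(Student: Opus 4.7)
By Proposition~\ref{keq2}, $\mathsf{TS}_2(T)$ is acyclic if and only if $T$ is $\{2K_2, D_{2,2,2}\}$-free, so my plan is to show that a tree is $\{2K_2, D_{2,2,2}\}$-free precisely when it is either $K_{1,s}$ or $D_{1,2,s}$ for some positive integer $s$.

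The ``if'' direction is a short check. In $K_{1,s}$ every edge passes through the centre, so no induced $2K_2$ exists; and $K_{1,s}$ has only one non-leaf vertex, whereas $D_{2,2,2}$ requires two \emph{adjacent} vertices of degree $3$. For $T = D_{1,2,s}$ with $P_2$-vertices $p_1,p_2$, unique $p_1$-leaf $u_1$, and $p_2$-leaves $v_1,\ldots,v_s$, the only pairs of vertex-disjoint edges are $\{u_1p_1,\;p_2v_j\}$; but the subgraph induced on $\{u_1,p_1,p_2,v_j\}$ is a $P_4$ because of the edge $p_1p_2$, so no induced $2K_2$ occurs. An induced $D_{2,2,2}$ would again need two adjacent vertices of degree at least $3$, yet $p_1$ has degree $2$ in $D_{1,2,s}$, ruling this out as well.

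For the converse I would argue by cases on the diameter of $T$. If $\operatorname{diam}(T)\ge 4$, then $T$ contains an induced $P_5$ (the unique path between two diametral vertices is induced in any tree), and the four vertices $\{v_1,v_2,v_4,v_5\}$ of that $P_5$ induce a $2K_2$. If $\operatorname{diam}(T)\le 2$, then $T=K_{1,s}$ for some $s\ge 0$ (we may assume $s\ge 1$ for the statement to be non-trivial). It remains to handle the case $\operatorname{diam}(T)=3$: fix a longest path $x\,p_1\,p_2\,y$ and observe that every other vertex must be a leaf adjacent to $p_1$ or $p_2$, since otherwise we could extend the path and exceed diameter~$3$. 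Writing $r$ and $s$ for the numbers of leaves at $p_1$ and $p_2$ respectively, this gives $T=D_{r,2,s}$ with $r,s\ge 1$.

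Finally, $\{2K_2,D_{2,2,2}\}$-freeness forces $\min(r,s)=1$, since otherwise $p_1,p_2$ together with two leaves from each side induce $D_{2,2,2}$; hence $T=D_{1,2,s}$ up to swapping the two ends. The only mildly substantive step is the double-star classification of diameter-$3$ trees, which is standard, so I expect no serious obstacle.
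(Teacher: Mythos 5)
Your proof is correct, but it takes a genuinely different route from the paper. The paper disposes of this corollary in one sentence by observing that its proof of Proposition~\ref{keq2} is effectively an algorithm: one keeps growing $T^\prime$ from $N[a]$, and either a forbidden subgraph appears or the process halts with $T=T^\prime$ equal to one of the listed trees. You instead combine Proposition~\ref{keq2} with an independent, direct classification of $\{2K_2, D_{2,2,2}\}$-free trees by diameter: $\operatorname{diam}(T)\ge 4$ yields an induced $P_5$ and hence a $2K_2$; $\operatorname{diam}(T)\le 2$ gives a star; and $\operatorname{diam}(T)=3$ gives a double star $D_{r,2,s}$, where $D_{2,2,2}$-freeness forces $\min(r,s)=1$. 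All the steps check out (the diametral path in a tree is indeed induced, and the double-star classification of diameter-$3$ trees is standard). What your approach buys is a self-contained, verifiable argument that does not require the reader to re-open and reinterpret the proof of Proposition~\ref{keq2}; what the paper's approach buys is brevity and reuse of work already done. One shared loose end, not specific to your write-up: the one-vertex tree has an acyclic (empty) $\mathsf{TS}_2$-graph but is neither $K_{1,s}$ with $s\ge 1$ nor $D_{1,2,s}$, so the statement implicitly assumes $T$ has at least one edge; you flag this parenthetically, which is fine.
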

\begin{proof}
The proof of Proposition~\ref{keq2} can be viewed as an algorithm that takes a tree $T$
and either terminates with $T=T^\prime$ being one of the trees in the corollary or finds
a forbidden induced graph in $T$.
\end{proof}

\begin{corollary}
\label{F2}
Let $F$ be a forest.
Then $\mathsf{TS}_2(F)$ is a acyclic if and only if $F$ is $\{2K_2, D_{2,2,2}\}$-free.
\end{corollary}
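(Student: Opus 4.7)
The plan is to reduce the forest case to the tree case already handled in Proposition~\ref{keq2}, by isolating the (at most one) non-trivial component of $F$.

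The forward direction mirrors the one in Proposition~\ref{keq2}: as noted in the discussion preceding Lemma~\ref{db}, an induced copy of $2K_2$ (resp.\ $D_{2,2,2}$) inside any graph $F$ produces an induced $C_4$ (resp.\ $C_8$) inside $\mathsf{TS}_2(F)$, so $\mathsf{TS}_2(F)$ cannot be acyclic whenever $F$ contains either forbidden subgraph.

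For the converse, I would begin with a structural observation on $2K_2$-free forests: at most one component of $F$ can contain an edge, since two non-trivial components would immediately yield an induced $2K_2$ by picking one edge from each. Hence $F$ can be written as $F = T + nK_1$ for some tree $T$ (possibly a single vertex or empty) and some integer $n \geq 0$. As an induced subgraph of $F$, the tree $T$ is itself $\{2K_2, D_{2,2,2}\}$-free, so Proposition~\ref{keq2} gives that $\mathsf{TS}_2(T)$ is acyclic.

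It remains to show that the added isolated vertices do not create any cycles in $\mathsf{TS}_2(F)$. Since every edge of $F$ lies inside $T$, a token placed on an isolated vertex of $F$ can never move under a slide. I would then partition the size-$2$ independent sets of $F$ according to how many of the two tokens lie in $V(T)$, obtaining three groups with no $\mathsf{TS}$-edges between them: the pairs contained in $V(T)$ induce $\mathsf{TS}_2(T)$; for each isolated vertex $i$, the pairs $\{t,i\}$ with $t \in V(T)$ induce a copy of $T$ (the only permissible slide keeps $i$ fixed and moves the $T$-token along an edge of $T$); and the pairs of two isolated vertices are isolated nodes. Each piece is acyclic---$\mathsf{TS}_2(T)$ by the previous step, $T$ because it is a tree, and isolated nodes trivially---so $\mathsf{TS}_2(F)$ is acyclic as well. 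The only step that requires a little care, otherwise routine, is the cross-group verification, which reduces to the fact that a single slide changes exactly one element of the pair and that element must be incident to an edge of $F$, hence of $T$.
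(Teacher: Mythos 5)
Your proof is correct, but it is organized differently from the paper's. The paper argues the converse by contrapositive: it assumes $\mathsf{TS}_2(F)$ has a cycle, asserts that both vertices of each independent set on the cycle lie in a single component $T$, and then invokes Proposition~\ref{keq2} to find a forbidden subgraph in $T$. You instead argue the implication directly: you first use $2K_2$-freeness to pin down the global structure $F = T + nK_1$, apply Proposition~\ref{keq2} to the unique non-trivial component, and then explicitly decompose $\mathsf{TS}_2(T + nK_1)$ into $\mathsf{TS}_2(T)$, one copy of $T$ per isolated vertex, and isolated nodes, with no edges between the pieces. Your route is longer but buys extra robustness: the paper's key assertion, taken literally, is not true for arbitrary forests (in $\mathsf{TS}_2(2K_2)$ the $C_4$ consists entirely of independent sets straddling two components), so the paper's argument implicitly relies on handling the cross-component case separately, which is exactly what your preliminary reduction to $F = T + nK_1$ makes unnecessary. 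The decomposition of $\mathsf{TS}_2(T+nK_1)$ you give is a clean structural fact in its own right and your cross-group check (a slide moves exactly one token along an edge of $F$, hence of $T$) is the right justification. No gaps.
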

\begin{proof}
We prove that $\mathsf{TS}_2(F)$ contains a cycle if and only if $F$ contains one of the graphs in $\{2K_2, D_{2,2,2}\}$ as an induced subgraph.

Suppose that $\mathsf{TS}_2(F)$ contains a cycle. Since the independent sets have size two,
both vertices of each independent set must lie in the same connected component $T$ of $F$. By Proposition \ref{keq2}, the tree $T$ must have either $2K_2$ or $D_{2,2,2}$ as an induced subgraph.

Conversely if $F$ contains $2K_2$ or $D_{2,2,2}$ as an induced subgraph then $\mathsf{TS}_2(F)$
contains respectively a $C_4$ or a $C_8$.
\end{proof}

Moving to the case of stable sets of size three, the conditions for trees and forests differ slightly. We deal with the tree case first.
\begin{proposition}
\label{keq3}
	Let $T$ be a tree.
	Then $\mathsf{TS}_3(T)$ is acyclic if and only if $T$ is $\{2K_2+K_1, D_{2,4,2}\}$-free.
\end{proposition}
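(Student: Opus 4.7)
The plan is to adapt the algorithmic proof of Proposition \ref{keq2}, with the roles of $2K_2$ and $D_{2,2,2}$ now played by $2K_2+K_1$ and $D_{2,4,2}$. The $(\Rightarrow)$ direction is immediate from Lemma \ref{db}: an induced $2K_2+K_1$ in $T$ yields an induced $C_4$ in $\mathsf{TS}_3(T)$ via Lemma \ref{db}(a) with $k=3,\,n=1$, and an induced $D_{2,4,2}$ in $T$ yields an induced $C_8$ in $\mathsf{TS}_3(T)$ via Lemma \ref{db}(c) with $r=s=2,\,n=4,\,k=3$, using that any induced subgraph $H$ of $T$ gives $\mathsf{TS}_3(H)$ as an induced subgraph of $\mathsf{TS}_3(T)$ on the common size-$3$ independent sets.

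For $(\Leftarrow)$, I assume $\mathsf{TS}_3(T)$ has a cycle and aim to find one of the two forbidden induced subgraphs. If $T \simeq P_n$, then $T \simeq D_{1,n-2,1}$ and Lemma \ref{db}(b) forces $n \geq 7$; the vertices at positions $1,2,4,5,7$ on $P_7$ induce $2K_2+K_1$. Otherwise let $a$ be a vertex of maximum degree $\geq 3$ in $T$ and grow an induced subtree $T^\prime \subseteq T$ incrementally as in Proposition \ref{keq2}. Start with $T^\prime := N[a]$, whose $\mathsf{TS}_3$-graph is edgeless since any two size-$3$ leaf triples of a star differ in more than one vertex. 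Picking a child $b$ of $a$ of maximum degree with $r \geq 1$ children and setting $s := \deg(a) - 1 \geq 2$, adding $N[b]$ yields $T^\prime = D_{r,2,s}$; Lemma \ref{db}(b,c) still gives $\mathsf{TS}_3(T^\prime)$ acyclic since $n = 2 < 2k - 2 = 4$, so $T \neq T^\prime$ and more structure remains.

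The heart of the argument is a case analysis exploiting the fact that $s \geq 2$, and often $r \geq 2$, provides a spare leaf that can serve as the isolated $K_1$ in a $2K_2+K_1$. First, if some sibling $c$ of $b$ has a child $c^\prime$ in $T$, then $\{c,c^\prime,b,u_1,d\}$ induces $2K_2+K_1$, where $u_1$ is any child of $b$ and $d$ is another sibling of $b$ (which exists because $s \geq 2$). Second, if $r \geq 2$ and some child $e$ of $b$ has a child $f$ in $T$, then $\{e,f,a,c,u_2\}$ induces $2K_2+K_1$, where $u_2$ is another child of $b$ and $c$ is any sibling of $b$. The remaining case is $r=1$ with no sibling of $b$ having a child, so the extension must proceed through the unique child $e$ of $b$ and its descendants. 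If $e$ has $m \geq 2$ children in $T$ and any such child $e_i$ itself has a child $f$, then $\{e_i,f,a,c,e_j\}$ with $j \neq i$ again induces $2K_2+K_1$. Otherwise $e$ has a unique child $e_1$, and $T^\prime$ grows along a chain through the unique descendant at each stage, realizing $D_{1,n,s}$ for increasing $n$ (still acyclic by Lemma \ref{db}(b) for $n \leq 4$). If at the stage when the chain reaches internal spine length $4$ the endpoint has $\geq 2$ children, then $T^\prime$ contains $D_{l,4,s}$ with $l,\,s \geq 2$ and hence $D_{2,4,2}$; otherwise the chain continues to a $D_{l^\prime,5,s}$, which contains $D_{1,5,s}$ and hence the induced $2K_2+K_1$ on $\{u_1,p_1,p_3,p_4,v_1\}$.

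The main technical obstacle is this final chain sub-case: one must verify that each intermediate $D_{1,n,s}$ with $n \in \{2,3,4\}$ indeed keeps $\mathsf{TS}_3$ acyclic (so that the extension process does not terminate prematurely), and must correctly identify the forbidden subgraph either at the first moment of branching along the spine at length $4$ or once the spine reaches length $5$. This is where the proof genuinely differs from the $k=2$ case: the longer allowed chain (up to $D_{1,4,s}$) and the requirement of an additional isolated vertex compared to $2K_2$ together demand a more delicate extension argument than in Proposition \ref{keq2}.
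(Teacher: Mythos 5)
Your proposal is correct and follows essentially the same route as the paper's proof: grow an induced subtree $T^\prime$ from $N[a]$, certify acyclicity of each intermediate stage ($D_{r,2,s}$, $D_{m,3,s}$, $D_{1,4,s}$) via Lemma~\ref{db}, and exhibit the same witnesses for $2K_2+K_1$ (branching cases) and $D_{2,4,2}$ (the spine-length-$4$ branching case), with only a cosmetic reorganization of the case tree. Two trivially repairable slips: $\mathsf{TS}_3(N[a])$ is edgeless not because leaf triples differ in more than one vertex (they need not) but because the two differing leaves of a star are never adjacent, and the subcase where $e$ has $m\ge 2$ children none of which has a child should be dismissed explicitly (it forces $T=D_{m,3,s}$, contradicting the assumption that $\mathsf{TS}_3(T)$ has a cycle).
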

\begin{proof}
The structure of the proof is the same as for Proposition~\ref{keq2}. However, there
are more cases to consider.
\begin{itemize}
                \item[($\Rightarrow$)] 
                Suppose to the contrary that either $2K_2+K_1$ or $D_{2,4,2}$ is an induced subgraph of $T$.
                In the first case it follows that $\mathsf{TS}_3(T)$ contains a $C_4$ and in the second case that it
                contains a $C_8$. 

                \item[($\Leftarrow$)]
We assume that $\mathsf{TS}_3(T)$ contains a cycle and show that it must contain
one of the two forbidden subgraphs.
The first part of the proof is essentially the same as for Proposition~\ref{keq2}
with minor modifications.
Firstly suppose that $T$ is a path $P_n$.
Since $\mathsf{TS}_3(T)$ contains a cycle
it follows from Lemma~\ref{db}(b) that $n \ge 7$ and so $T$ contains an induced $2K_2+K_1$.

We now assume $T$ has a vertex of at least degree 3.
We will construct a copy $T^\prime$ of $T$ by initially choosing a vertex $a$ of
maximum degree in $T$ and letting $T^\prime = N[a]$. Note that $\mathsf{TS}_3(T^\prime)$ is acyclic.
We add edges from $T$ to $T^\prime$ showing after each addition
that either $T^\prime$ contains a forbidden
subgraph, so we are done, or that $\mathsf{TS}_3(T^\prime)$ remains acyclic so that $T \neq T^\prime$.

Let $b$ be a child of $a$ of highest degree,
$c$ be a child of next highest degree,
and $d$ be any other child. 
Since $\mathsf{TS}_3(T^\prime)$ is acyclic $T \neq T^\prime$ and
$b$ must have $r \ge 1$ children.           
Let $e$ be a child of $b$ with maximum degree. 
If $c$ has a child $y$ then $\{b,c,d,e,y\}$ induce a $2K_2+K_1$ and we are done.
Otherwise we add $N[b]$ to $T^\prime$
obtaining a
copy of $D_{r,2,s}$, where $s=\deg_T(a)-1 \ge 2$.
By Lemma~\ref{db}(c), $\mathsf{TS}_3(T^\prime)$ is acyclic and so $T \neq T^\prime$.
There are two cases:

\begin{itemize}
\item[($r \ge 2$)]
Let $f$ be a second child of $b$ and let $g$ be a child of $e$. 
Adding $eg$ to $T^\prime$ we obtain $2K_2+K_1$ as an induced subgraph on $\{a,d,e,f,g\}$.

\item[($r = 1$)]
Since $e$ is the only child of $b$ it must have children. Let $t \ge 1$ be the
number of children of $e$ and let $h$
be the child of $e$ of maximum degree. We add $N[e]$ to $T^\prime$ obtaining a copy
of $D_{t,3,s}$ and $\mathsf{TS}_3(T^\prime)$ is acyclic by  Lemma~\ref{db}(c).
There are two subcases:

\begin{itemize}
\item[($t \ge 2$)]
Let $i$ be any other child of $e$. 
Since $\mathsf{TS}_3(T^\prime)$ is acyclic $h$ must have at least one child $j$.
We have now constructed an induced $2K_2+K_1$ on
$\{a,d,h,i,j\}$.

\item[($t = 1$)]
If $h$ has a single child $k$ add $hk$ to $T^\prime$ which is a copy
of $D_{1,4,s}$ and again by Lemma~\ref{db}(c) $\mathsf{TS}_3(T^\prime)$ is acyclic.
So $k$ has a child $l$. Adding $kl$ to $T^\prime$ it contains an induced $P_7$ and we find the forbidden subgraph $2K_2+K_1$
on vertices $\{a,d,e,k,l\}$.
Otherwise, $h$ has at least two children including vertices $k$ and $m$. 
Adding edges $hk$ and $hm$ to $T^\prime$ we obtain
the forbidden subgraph $D_{2,4,2}$. This completes the proof.
\end{itemize}
\end{itemize}
\end{itemize}

\begin{figure}[ht]
	\centering
	\includegraphics[width=0.6\textwidth]{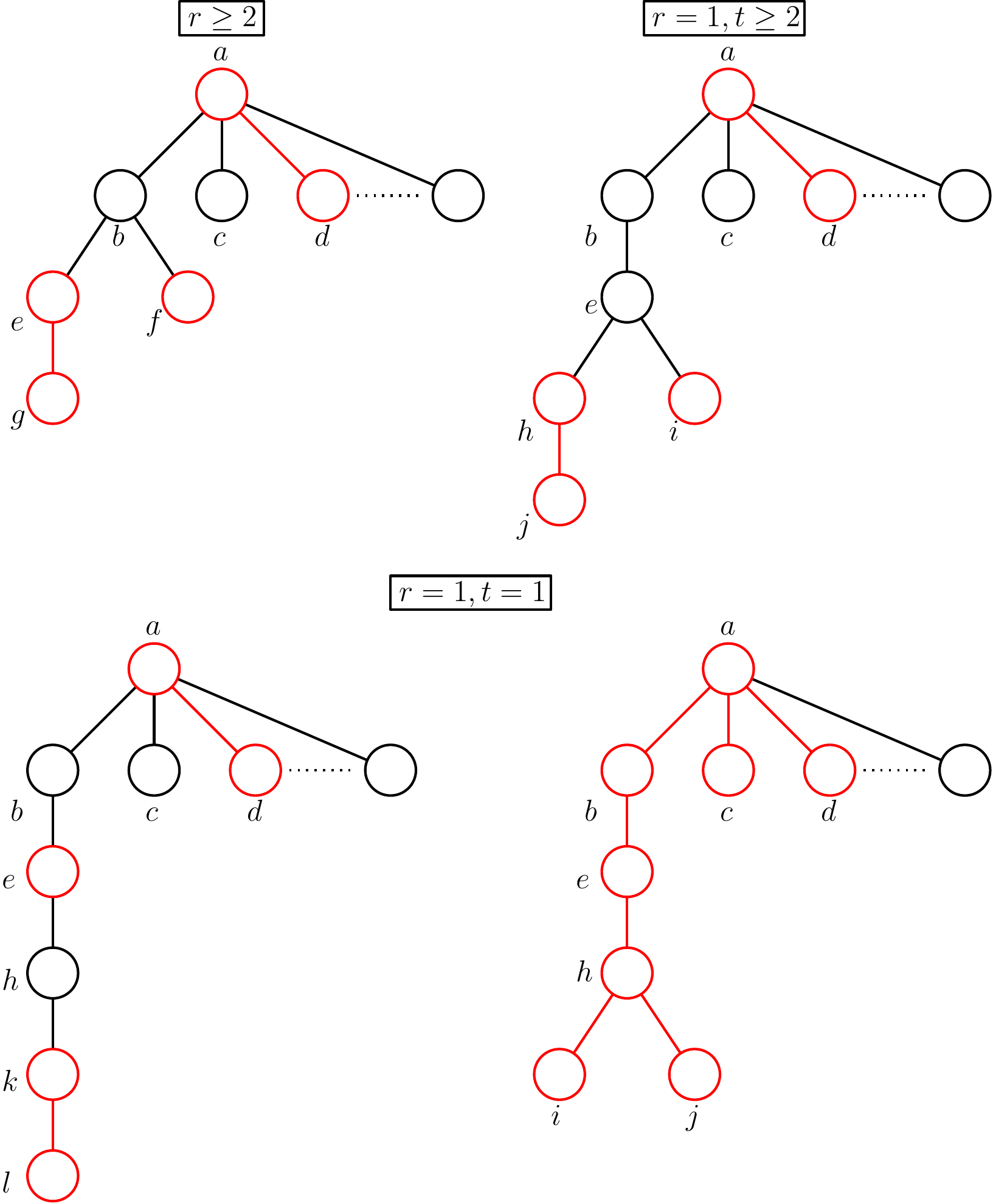}
	\caption{Illustration for Proposition~\ref{keq3}: Some trees $T^\prime$ containing $N[b]$ whose $\mathsf{TS}_3$-graphs have a cycle. Here $r$ and $t$ are respectively the number of children of $b$ and its child $e$. Copies of $2K_2 + K_1$ and $D_{2,4,2}$ are marked by red color.}
\end{figure}

\end{proof}
\begin{corollary}
Let $T$ be a tree.
Then $\mathsf{TS}_3(T)$ is a acyclic if and only if for some positive integer $s$, $T$ is either
$K_{1,s}$,
$D_{1,n,s}$ where $n \le 4$, or
$D_{r,n,s}$ where $r \ge 2$ and $n \le 3$.
\end{corollary}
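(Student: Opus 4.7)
The plan is to reinterpret the proof of Proposition~\ref{keq3} as an algorithm, exactly as was done in the corollary after Proposition~\ref{keq2}. Given a tree $T$, that proof either unearths an induced $2K_2 + K_1$ or $D_{2,4,2}$ in $T$ (forcing a cycle in $\mathsf{TS}_3(T)$), or it terminates successfully with $T$ equal to the current $T^\prime$. The corollary then follows by reading off every successful termination state and checking that these are precisely the trees in the three listed families.

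First I would dispose of the case in which $T$ has maximum degree at most $2$: then $T$ is a path $P_n = D_{1,n-2,1}$, and Lemma~\ref{db}(b) with $s=1$ gives acyclicity of $\mathsf{TS}_3(P_n)$ exactly for $n \leq 6$; the resulting paths $P_2, \ldots, P_6$ appear in the list as $K_{1,1}, K_{1,2}, D_{1,2,1}, D_{1,3,1}, D_{1,4,1}$. Otherwise $T$ has a vertex $a$ of degree at least $3$, and, writing $s = \deg_T(a) - 1 \geq 2$, I would walk through the case analysis of Proposition~\ref{keq3} collecting every state at which it returns $T = T^\prime$. These are: (i) $T^\prime = N[a] = K_{1,s+1}$, obtained when $a$ has no grandchildren; (ii) $T^\prime = D_{r,2,s}$ for any $r \geq 1$, obtained after the $N[b]$-extension; (iii) $T^\prime = D_{t,3,s}$ for any $t \geq 1$, obtained in the $r=1$ branch after the $N[e]$-extension; and (iv) $T^\prime = D_{1,4,s}$, obtained in the $r=t=1$ sub-branch after adding the edge $hk$. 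These shapes, together with the short paths, populate exactly the three families $K_{1,s}$, $D_{1,n,s}$ with $n \leq 4$, and $D_{r,n,s}$ with $r \geq 2$ and $n \leq 3$; moreover Lemma~\ref{db} confirms acyclicity of $\mathsf{TS}_3$ on each of them (using part (c) when both side-indices are at least $2$, and part (b) otherwise via the symmetry $D_{r,n,s} \simeq D_{s,n,r}$).

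The main obstacle, though one of bookkeeping rather than genuine depth, is to verify that every successful termination state of the algorithm from Proposition~\ref{keq3} lies in the stated families and, conversely, that every tree in those families is reachable as some $T^\prime$. Once this is carried out, the corollary is immediate: any tree not in the listed families must, by the forward direction of Proposition~\ref{keq3}, contain $2K_2 + K_1$ or $D_{2,4,2}$ as an induced subgraph, which forces $\mathsf{TS}_3(T)$ to contain a cycle.
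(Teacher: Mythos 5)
Your proposal is correct and follows essentially the same route as the paper, which likewise treats the proof of Proposition~\ref{keq3} as an algorithm whose successful termination states are read off as the listed families; you simply make explicit the enumeration of those states and the appeal to Lemma~\ref{db} for their acyclicity.
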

\begin{proof}
The proof of Proposition~\ref{keq3} can be viewed as an algorithm that takes a tree $T$
and either terminates with $T=T^\prime$ being one of the trees in the corollary or finds
a forbidden induced graph in $T$ showing that $\mathsf{TS}_3(T)$ has a cycle.
\end{proof}

\begin{corollary}
\label{F3}
	Let $F$ be a forest.
	Then $\mathsf{TS}_3(F)$ is a forest if and only if $F$ is $\{2K_2+K_1, D_{2,2,2}+K_1, D_{2,4,2}\}$-free.
\end{corollary}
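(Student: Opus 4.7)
The plan is to prove both directions of the biconditional, using the preceding results as building blocks. For the forward (easier) direction, I would verify that each of the three listed induced subgraphs produces a cycle in $\mathsf{TS}_3(F)$: the subgraph $2K_2+K_1$ yields a $C_4$ by Lemma~\ref{db}(a) with $k=3$, $n=1$; the subgraph $D_{2,4,2}$ yields a $C_8$ by Lemma~\ref{db}(c) with $k=3$, $n=4$; and $D_{2,2,2}+K_1$ yields a $C_8$ because the size-$3$ independent sets containing the isolated vertex correspond bijectively (and adjacency-preservingly) to size-$2$ independent sets of $D_{2,2,2}$, so the $C_8$ in $\mathsf{TS}_2(D_{2,2,2})$ noted before Lemma~\ref{db} lifts directly.

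The backward direction rests on one simple invariance: every slide in $F$ happens along an edge, which lies inside a single tree component, so the vector $(|I \cap V(T)|)_T$ over components $T$ of $F$ is constant along every walk in $\mathsf{TS}_3(F)$. A cycle in $\mathsf{TS}_3(F)$ therefore has a fixed distribution summing to $3$, and falls into one of three shapes: $(3)$, $(2,1)$, or $(1,1,1)$. In the $(3)$-case, the cycle lies in $\mathsf{TS}_3(T)$ for a single component $T$, and Proposition~\ref{keq3} yields an induced $2K_2+K_1$ or $D_{2,4,2}$ in $T \subseteq F$.

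In the $(2,1)$-case, letting $T_1$ carry two tokens and $T_2$ one, the induced subgraph of $\mathsf{TS}_3(F)$ on such independent sets is exactly the Cartesian product $\mathsf{TS}_2(T_1) \,\square\, T_2$. I would invoke the standard fact that $A \,\square\, B$ has a cycle iff $A$ or $B$ has a cycle, or both $A$ and $B$ contain an edge. Since $T_2$ is a tree, two subcases arise. If $\mathsf{TS}_2(T_1)$ has a cycle, then Corollary~\ref{F2} gives an induced $2K_2$ or $D_{2,2,2}$ in $T_1$; appending any vertex of $T_2$ produces the forbidden $2K_2+K_1$ or $D_{2,2,2}+K_1$. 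Otherwise $\mathsf{TS}_2(T_1)$ has an edge, say between $\{a,b\}$ and $\{a,c\}$; then $bc\in E(T_1)$ and $ab,ac\notin E(T_1)$, so $\{a,b,c\}$ induces $K_2+K_1$ in $T_1$, which combined with any edge of $T_2$ yields an induced $2K_2+K_1$ in $F$. In the $(1,1,1)$-case with components $T_1,T_2,T_3$, the relevant subgraph is $T_1 \,\square\, T_2 \,\square\, T_3$; since none of the $T_i$ contains a cycle, the same Cartesian-product fact (applied twice) forces at least two of them, say $T_1$ and $T_2$, to have an edge, and the nonempty $T_3$ supplies the isolated vertex, producing an induced $2K_2+K_1$ in $F$.

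The main obstacle is verifying cleanly that any cycle in $\mathsf{TS}_3(F)$ is captured by one of these three token-distribution cases and that the Cartesian-product identification is exact; once that is set up, the case analysis is mechanical and all the combinatorial content has been outsourced to Proposition~\ref{keq3}, Corollary~\ref{F2}, and Lemma~\ref{db}.
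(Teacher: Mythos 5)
Your proof is correct, and while it shares the paper's overall skeleton (fix the distribution of the three tokens over the components of $F$ and split into the shapes $(3)$, $(2,1)$, $(1,1,1)$), your handling of the mixed case is genuinely different and in fact more complete. The paper's proof of the $(2,1)$ case asserts that the cycle $C$ ``induces a cycle in $\mathsf{TS}_2(T)$'' and then applies Proposition~\ref{keq2}; but the projection of $C$ onto the two-token component is only a closed walk, which can be degenerate. For example, with $F = P_4 + K_2$ the graph $\mathsf{TS}_2(P_4) \simeq P_3$ is acyclic, yet $\mathsf{TS}_3(F)$ contains a $C_4$ coming from one slide in each component; here $C$ induces no cycle in $\mathsf{TS}_2(P_4)$, and the paper's argument does not literally apply (the conclusion survives only because $P_4 + K_2$ happens to contain an induced $2K_2+K_1$). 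Your identification of the induced subgraph on the $(2,1)$-class as the Cartesian product $\mathsf{TS}_2(T_1)\,\square\,T_2$, together with the standard criterion for when a Cartesian product is acyclic, exposes exactly this extra subcase (both factors acyclic but each containing an edge) and resolves it by extracting an induced $K_2+K_1$ from $T_1$ via an edge $\{a,b\}$--$\{a,c\}$ of $\mathsf{TS}_2(T_1)$ and pairing it with an edge of $T_2$ to get $2K_2+K_1$. The same product viewpoint makes the $(1,1,1)$ case immediate, where the paper's ``at least two components must be non-trivial'' is asserted with less justification. The forward direction is handled identically in both proofs. In short: same decomposition, but your Cartesian-product formulation buys a rigorous treatment of the mixed case and closes a small gap in the published argument; the paper's version is shorter but leans on a projection claim that is not true as stated.
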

\begin{proof}
We prove that $\mathsf{TS}_3(F)$ contains a cycle if and only if $F$ contains one of the graphs in $\{2K_2+K_1, D_{2,2,2}+K_1, D_{2,4,2}\}$ as an induced subgraph.

Suppose that $\mathsf{TS}_3(F)$ contains a cycle $C$. Since the independent sets have size three,
there are three cases to consider. Firstly, if the three 
vertices of each independent set in $C$ lie in the same connected component $T$ of $F$, by Proposition \ref{keq3}, the tree $T$ must have either $2K_2+K_1$ or $D_{2,4,2}$ as an induced subgraph. Secondly, suppose two of the vertices of each stable set lie in the same connected component $T$ of $F$, which must have at least two connected components. Thus, $C$ induces a cycle in $\mathsf{TS}_2(T)$.
So by Proposition \ref{keq2}, the tree $T$ must have either $2K_2$ or $D_{2,2,2}$ as an induced subgraph. Since $F$ has at least two components, $F$ contains $2K_2+K_1$ or $D_{2,2,2}+K_1$.
Finally, suppose each vertex of each stable set lies in a different component of $F$, which therefore
has at least three components. 
At least two of these
components must be non-trivial, i.e., contain an
edge. Therefore, $F$ contains an induced $2K_2+K_1$.

Conversely, suppose $F$ contains $2K_2+K_1$, $D_{2,2,2}+K_1$ or $D_{2,4,2}$ as an induced subgraph. Then $\mathsf{TS}_3(F)$
contains a $C_4$ in the first instance or a $C_8$ in the other two.
 \end{proof}
For $k \geq 4$, we have the following proposition.
\begin{proposition}\label{prop:acyclic-TS4}
	Let $F$ be a forest.
	For $k \geq 4$, if $F$ contains either $2K_2+(k-2)K_1$, or $D_{2,2,2}+(k-2)K_1$, or $D_{2,4,2}+(k-3)K_1$ as an induced subgraph, $\mathsf{TS}_k(F)$ has a cycle.
\end{proposition}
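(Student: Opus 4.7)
The plan is to reduce the proposition to three concrete cycle constructions by invoking a standard monotonicity property of the $\mathsf{TS}_k$ operator: whenever $H$ is an induced subgraph of $F$, the graph $\mathsf{TS}_k(H)$ is an induced subgraph of $\mathsf{TS}_k(F)$. This is immediate from the definitions: every size-$k$ independent set of $H$ is also one of $F$, and a TS-adjacency between two such sets in $F$ uses an edge of $F$ on two vertices of $V(H)$, which by the induced property is already an edge of $H$. Hence, it suffices to exhibit a cycle in $\mathsf{TS}_k(H)$ for each of the three candidates $H \in \{2K_2 + (k-2)K_1,\ D_{2,2,2} + (k-2)K_1,\ D_{2,4,2} + (k-3)K_1\}$.

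The case $H = 2K_2 + (k-2)K_1$ is immediate from Lemma~\ref{db}(a), which applied with $n = k-2$ directly yields a $C_4$ in $\mathsf{TS}_k(H)$. For the other two cases I would use a simple \emph{lifting} observation: if $G_0$ and $G_1$ are disjoint graphs (no shared vertices and no edges between them) and $I$ is a fixed independent set of $G_1$, then the map $S \mapsto S \cup I$ embeds $\mathsf{TS}_{j}(G_0)$, for $j = k - |I|$, as a subgraph of $\mathsf{TS}_k(G_0 + G_1)$, because the tokens on $I$ never move and every TS-move $S \to S'$ in $G_0$ lifts to $S \cup I \to S' \cup I$ using the same sliding edge. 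In particular, cycles are transported unchanged.

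Apply this with $G_0 = D_{2,2,2}$, $j = 2$, and $I$ equal to the vertex set of the $(k-2)K_1$ summand: the explicit $C_8$ on
$\{u_1v_1, u_1p_2, u_1v_2, p_1v_2, u_2v_2, u_2p_2, u_2v_1, p_1v_1\}$
in $\mathsf{TS}_2(D_{2,2,2})$ displayed just before Lemma~\ref{db} lifts to a $C_8$ in $\mathsf{TS}_k(D_{2,2,2} + (k-2)K_1)$. Similarly, applying the observation with $G_0 = D_{2,4,2}$, $j = 3$, and $I$ the vertex set of the $(k-3)K_1$ summand, the $C_8$ furnished by Lemma~\ref{db}(c) with $k = 3$ and $n = 4$ (using $2k - 2 = 4 \le n$) lifts to a $C_8$ in $\mathsf{TS}_k(D_{2,4,2} + (k-3)K_1)$.

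There is no real obstacle here: the proof is essentially a verification that taking induced subgraphs and padding by disjoint isolated vertices both behave well under $\mathsf{TS}_k$, and all the witness cycles are already constructed in Lemma~\ref{db} or the paragraph preceding it. The only point requiring a line of care is that the padded tokens must remain disjoint from the vertices used by the cycle in $G_0$; this is automatic because $G_1$ is a disjoint union of isolated vertices that share no vertex with $V(G_0)$.
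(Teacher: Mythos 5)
Your proposal is correct and follows essentially the same route as the paper's own (much terser) proof: verify the witness cycles in $\mathsf{TS}_2(2K_2)$, $\mathsf{TS}_2(D_{2,2,2})$, and $\mathsf{TS}_3(D_{2,4,2})$, lift them by padding with tokens on the isolated vertices, and conclude via the induced-subgraph monotonicity of $\mathsf{TS}_k$. You merely spell out explicitly the two transfer steps that the paper leaves implicit, which is a fair expansion rather than a different argument.
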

\begin{proof}
	One can verify that $\mathsf{TS}_2(2K_2)$ contains a $C_4$, and $\mathsf{TS}_2(D_{2,2,2})$ and $\mathsf{TS}_3(D_{2,4,2})$ both contain a $C_8$.
	As a result, so do $\mathsf{TS}_k(2K_2 + (k-2)K_1)$, $\mathsf{TS}_k(D_{2,2,2} + (k-2)K_1)$, and $\mathsf{TS}_k(D_{2,4,2} + (k-3)K_1)$, respectively.
	Consequently, $\mathsf{TS}_k(F)$ has a cycle, as desired.
\end{proof}

We conclude this section with the following conjecture for $k \geq 4$.
\begin{conjecture}
	Let $F$ be a forest.
	For $k \geq 4$, if $\mathsf{TS}_k(F)$ is a forest, $F$ is $\{2K_2+(k-2)K_1, D_{2,2,2}+(k-2)K_1, D_{2,4,2}+(k-3)K_1\}$-free.
\end{conjecture}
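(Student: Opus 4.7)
The plan is to reduce the assertion for arbitrary $k\ge 4$ to the three base cases already observed for small $k$, by padding independent sets with ``frozen'' tokens placed on the isolated vertices. The whole argument rests on two elementary preservation principles plus a one-line verification of each base cycle.

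First I would record the preservation principles. \emph{Padding principle.} If $H$ is any graph and $\mathsf{TS}_j(H)$ contains a cycle whose consecutive nodes are $I_1,I_2,\dots,I_\ell,I_1$, then for every $m\ge 0$ the graph $\mathsf{TS}_{j+m}(H+mK_1)$ also contains a cycle, namely the one on the nodes $I_1+W,I_2+W,\dots,I_\ell+W,I_1+W$, where $W$ is the vertex set of the $m$ added isolated vertices. This is because each $I_i+W$ is independent in $H+mK_1$ (the new vertices are non-adjacent to everything), each $|I_i+W|=j+m$, and a single valid token slide from $I_i$ to $I_{i+1}$ inside $H$ remains a valid token slide from $I_i+W$ to $I_{i+1}+W$ inside $H+mK_1$ since the two sets still differ by exactly one edge of $H$. \emph{Induced subgraph principle.} If $H$ is an induced subgraph of $F$, then every cycle of $\mathsf{TS}_k(H)$ is a cycle of $\mathsf{TS}_k(F)$, because independent sets of $H$ are independent sets of $F$ and the defining slide edge $uv\in E(H)$ is also an edge of $F$.

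Next I would recall the three base cycles, all of which are already noted earlier in the paper. The graph $\mathsf{TS}_2(2K_2)$ is isomorphic to $C_4$, so it contains a $C_4$. The graph $\mathsf{TS}_2(D_{2,2,2})$ contains the explicit $C_8$ $\{u_1v_1,u_1p_2,u_1v_2,p_1v_2,u_2v_2,u_2p_2,u_2v_1,p_1v_1\}$ exhibited just before Lemma~\ref{db}. The graph $\mathsf{TS}_3(D_{2,4,2})$ contains a $C_8$ by Lemma~\ref{db}(c) applied with $r=s=2$ and $n=4$ (since $n=4\ge 2k-2=4$).

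Finally I would combine these. Let $H$ be whichever of the three graphs $2K_2+(k-2)K_1$, $D_{2,2,2}+(k-2)K_1$, $D_{2,4,2}+(k-3)K_1$ appears as an induced subgraph of $F$. Applying the padding principle to the corresponding base graph (with $(j,m)=(2,k-2)$, $(2,k-2)$, or $(3,k-3)$ respectively) yields a $C_4$ or $C_8$ in $\mathsf{TS}_k(H)$. Applying the induced subgraph principle then transports this cycle into $\mathsf{TS}_k(F)$, so $\mathsf{TS}_k(F)$ is not acyclic.

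There is no real obstacle: the whole statement is a transfer-of-cycles argument. The only step that deserves a moment of care is the padding principle, where one must notice that even if adding the common set $W$ to both endpoints of every cycle edge creates extra edges among the $I_i+W$ (chords), this does not destroy the cycle we have exhibited; a graph containing the closed walk $I_1+W,\dots,I_\ell+W,I_1+W$ with distinct vertices and consecutive adjacencies still contains a cycle, which is all that is required.
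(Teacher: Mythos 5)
Your argument is correct and is essentially the paper's own: as literally written, this ``conjecture'' is just the contrapositive of Proposition~\ref{prop:acyclic-TS4}, whose one-paragraph proof likewise verifies the three base cycles ($C_4$ in $\mathsf{TS}_2(2K_2)$, $C_8$ in $\mathsf{TS}_2(D_{2,2,2})$ and in $\mathsf{TS}_3(D_{2,4,2})$) and then transfers them to $\mathsf{TS}_k(F)$ by padding with tokens on the added isolated vertices and passing from the induced subgraph to $F$ --- precisely the two principles you spell out. The only caveat is that the genuinely open content the authors presumably intend here is the converse implication (that $\{2K_2+(k-2)K_1,\, D_{2,2,2}+(k-2)K_1,\, D_{2,4,2}+(k-3)K_1\}$-freeness of a forest $F$ forces $\mathsf{TS}_k(F)$ to be a forest), which your cycle-transfer argument does not, and by its nature cannot, address.
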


\section{$H$-join and $H$-decomposition}
\label{sec:H-join}

Before considering (Q2), in this section, we describe an operation for combining $\mathsf{TS}_k$-graphs to produce new ones. 
We first define a family of \emph{base graphs} as follows. 
Let $V$ be a set of $k+1$ vertices including two labelled $u$ and $v$.
Then $B_k(V,uv)$ is the graph with vertex set $V$ and single edge $uv$.
We have $\mathsf{TS}_k(B_k(V,uv))=K_2$ whose two vertices are labelled by
the independent sets
$V - u$ and $V-v$. 
Next, we define the \emph{$H$-join} operation and its inverse.
\begin{definition}
	Vertex-labelled graphs $G_1$ and $G_2$ 
	are \emph{$H$-consistent} if the (possibly empty) intersection of their vertex sets
	define the same (possibly empty) common induced subgraph $H$. 
	The \emph{$H$-join} of $H$-consistent graphs $G_1$ and $G_2$ is 
	the graph $H(G_1,G_2)$ with $V(H(G_1,G_2)) = V(G_1) \cup V(G_2)$. The edges $E(H(G_1,G_2))$
	consist of $E(G_1) \cup E(G_2)$
	plus all edges $vw$ with $v \in V(G_1) \setminus V(H)$ and $w \in V(G_2) \setminus V(H)$.
\end{definition}

Recall that a \emph{(vertex) cut-set} in
a connected graph $G$ is a vertex set $W$ such that $G - W$ is disconnected.
We extend this definition to the case where $G$ is disconnected by allowing $W=\emptyset$.
We say that $W$ decomposes $G$ into two (not necessarily connected) induced subgraphs
$G_1$ and $G_2$ for which $V(G_1) \cap V(G_2) = W$
and  $V(G_1) \cup V(G_2) = V(G)$. If $G-W$ has more than two (connected) components, the decomposition
is not unique.
\begin{definition}
Let $G$ be a vertex-labelled graph.
Let $W \subset V(\overline{G}) = V(G)$ decompose the complement $\overline{G}$
into $\overline{G_1}$ and $\overline{G_2}$.
Let $H$ be the subgraph of $G$ induced by $W$.
We say that $G$ can be \emph{$H$-decomposed} into $G_1$ and $G_2$.
\end{definition}
It follows from the definitions that if $G=H(G_1,G_2)$  then
$G$ can be $H$-decomposed into $G_1$ and $G_2$, and vice versa.
It is easy to verify that the size-$k$ independent sets of $H(G_1,G_2)$ are the union of those
of $G_1$ and those of $G_2$.

As an example consider the two $4$-vertex graphs $G_1$ and $G_2$ that are paths with edge sets
$E(G_1)=\{ad, bc, cd\}$ and $E(G_2)=\{ad, ae, eb\}$. These share a common induced subgraph $H$
with $V(H)=\{a,b,d\}$ and $E(H)=\{ad\}$. We have $V(H(G_1,G_2))=\{a,b,c,d,e\}$ 
and $E(H(G_1,G_2))=\{ad,ae,bc,cd,ce,be\}$. Note that $\mathsf{TS}_2(G_1)$
is the path with edges $\{ac-ab,ab-bd\}$ and that $\mathsf{TS}_2(G_2)$
is the path with edges $\{ab-bd,bd-de\}$. It can be verified that $\mathsf{TS}_2(H(G_1,G_2))$
is the path with edges $\{ac-ab,ab-bd,bd-de\}$
which is the union of two paths $\mathsf{TS}_2(G_1)$ and $\mathsf{TS}_2(G_2)$.
(See \figurename~\ref{fig:G1G2}.)

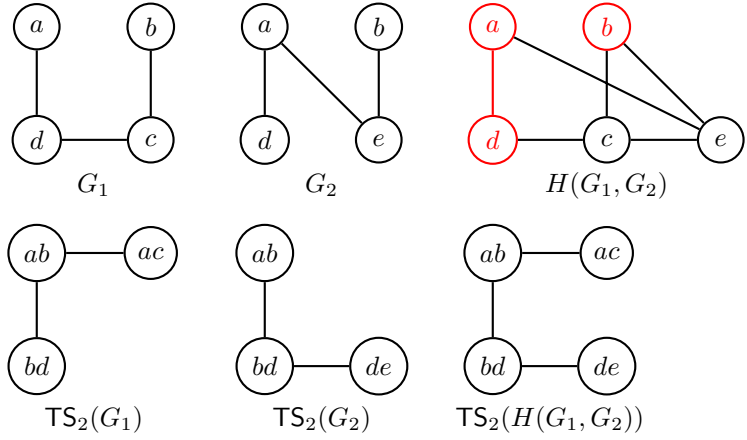
\begin{figure}[!ht]
	\centering
	\begin{adjustbox}{max width=\textwidth}
		\begin{tikzpicture}[every node/.style={circle, draw, thick, minimum size=6mm}]
			\begin{scope}
				\foreach \i/\x/\y in {a/0/0,b/1.5/0,c/1.5/-1.5,d/0/-1.5} {
					\node (\i) at (\x, \y) {$\i$};
				}
				\draw[thick] (a) -- (d) (d) -- (c) (c) -- (b);
				\node[rectangle, draw=none, fill=none] (G1) at (0.75,-2.1) {$G_1$};
			\end{scope}
			\begin{scope}[shift={(3,0)}]
				\foreach \i/\x/\y in {a/0/0,b/1.5/0,e/1.5/-1.5,d/0/-1.5} {
					\node (\i) at (\x, \y) {$\i$};
				}
				\draw[thick] (a) -- (d) (a) -- (e) (b) -- (e);
				\node[rectangle, draw=none, fill=none] (G2) at (0.75,-2.1) {$G_2$};
			\end{scope}
			\begin{scope}[shift={(6,0)}]
				\foreach \i/\x/\y in {c/1.5/-1.5,e/3/-1.5} {
					\node (\i) at (\x, \y) {$\i$};
				}
				\foreach \i/\x/\y in {a/0/0,b/1.5/0,d/0/-1.5} {
					\node[red] (\i) at (\x, \y) {$\i$};
				}
				\draw[thick] (a) edge[red] (d) (d) -- (c) (c) -- (b) (a) -- (e) (b) -- (e) (c) -- (e);
				\node[rectangle, draw=none, fill=none] (HG1G2) at (1.5,-2.1) {$H(G_1, G_2)$};
			\end{scope}
			\begin{scope}[shift={(0,-3)}]
				\foreach \i/\x/\y in {ab/0/0,ac/1.5/0,bd/0/-1.5} {
					\node (\i) at (\x, \y) {$\i$};
				}
				\draw[thick] (ab) -- (ac) (ab) -- (bd);
				\node[rectangle, draw=none, fill=none] (TS2G1) at (0.75,-2.2) {$\mathsf{TS}_2(G_1)$};
			\end{scope}
			\begin{scope}[shift={(3,-3)}]
				\foreach \i/\x/\y in {ab/0/0,de/1.5/-1.5,bd/0/-1.5} {
					\node (\i) at (\x, \y) {$\i$};
				}
				\draw[thick] (ab) -- (bd) (de) -- (bd);
				\node[rectangle, draw=none, fill=none] (TS2G2) at (0.75,-2.2) {$\mathsf{TS}_2(G_2)$};
			\end{scope}
			\begin{scope}[shift={(6,-3)}]
				\foreach \i/\x/\y in {ab/0/0,ac/1.5/0,de/1.5/-1.5,bd/0/-1.5} {
					\node (\i) at (\x, \y) {$\i$};
				}
				\draw[thick] (ab) -- (bd) (ab) -- (ac) (de) -- (bd);
				\node[rectangle, draw=none, fill=none] (TS2HG1G2) at (0.75,-2.2) {$\mathsf{TS}_2(H(G_1, G_2))$};
			\end{scope}
		\end{tikzpicture}
	\end{adjustbox}
	\caption{The graphs $G_1$, $G_2$, $H(G_1, G_2)$, and their corresponding $\mathsf{TS}_2$-graphs. Here $\mathsf{TS}_2(H(G_1, G_2)) = \mathsf{TS}_2(G_1) \cup \mathsf{TS}_2(G_2)$.}
	\label{fig:G1G2}
\end{figure}

Now consider the graph $G_3$ which is the path with edges $\{ad,cd,ce\}$.
$G_1$ and $G_3$ share a common induced subgraph $H$
with $V(H)=\{a,c,d\}$ and $E(H)=\{ad,cd\}$. We have 
$E(H(G_1,G_3))=\{ad,bc,be,cd,ce\}$. Note that 
$\mathsf{TS}_2(G_3)$
is the path with edges $\{ac-ae,ae-de\}$. In this case, $\mathsf{TS}_2(H(G_1,G_3))$
is %
the graph with edges $\{ab-ac,ac-ae,ae-de,de-bd,bd-ab,ab-ae\}$
which is the union of $\mathsf{TS}_2(G_1)$, $\mathsf{TS}_2(G_3)$, and the two additional
edges $de-bd, ab-ae$.
	(See \figurename~\ref{fig:G1G3}.)

\begin{figure}[!ht]
	\centering
	\begin{adjustbox}{max width=\textwidth}
		\begin{tikzpicture}[every node/.style={circle, draw, thick, minimum size=6mm}]
			\begin{scope}
				\foreach \i/\x/\y in {a/0/0,b/1.5/0,c/1.5/-1.5,d/0/-1.5} {
					\node (\i) at (\x, \y) {$\i$};
				}
				\draw[thick] (a) -- (d) (d) -- (c) (c) -- (b);
				\node[rectangle, draw=none, fill=none] (G1) at (0.75,-2.1) {$G_1$};
			\end{scope}
			\begin{scope}[shift={(3,0)}]
				\foreach \i/\x/\y in {a/0/0,e/1.5/0,c/1.5/-1.5,d/0/-1.5} {
					\node (\i) at (\x, \y) {$\i$};
				}
				\draw[thick] (a) -- (d) (d) -- (c) (c) -- (e);
				\node[rectangle, draw=none, fill=none] (G3) at (0.75,-2.1) {$G_3$};
			\end{scope}
			\begin{scope}[shift={(6,0)}]
				\foreach \i/\x/\y in {b/1.5/0,e/3/0} {
					\node (\i) at (\x, \y) {$\i$};
				}
				\foreach \i/\x/\y in {a/0/0,c/1.5/-1.5,d/0/-1.5} {
					\node[red] (\i) at (\x, \y) {$\i$};
				}
				\draw[thick] (a) edge[red] (d) (d) edge[red] (c) (c) -- (b) (c) -- (e) (b) -- (e) (c) -- (e);
				\node[rectangle, draw=none, fill=none] (HG1G3) at (1.5,-2.1) {$H(G_1, G_3)$};
			\end{scope}
			\begin{scope}[shift={(0,-3)}]
				\foreach \i/\x/\y in {ab/0/0,ac/1.5/0,bd/0/-1.5} {
					\node (\i) at (\x, \y) {$\i$};
				}
				\draw[thick] (ab) -- (ac) (ab) -- (bd);
				\node[rectangle, draw=none, fill=none] (TS2G1) at (0.75,-2.2) {$\mathsf{TS}_2(G_1)$};
			\end{scope}
			\begin{scope}[shift={(3,-3)}]
				\foreach \i/\x/\y in {ac/0/0,ae/1.5/0,ed/0/-1.5} {
					\node (\i) at (\x, \y) {$\i$};
				}
				\draw[thick] (ae) -- (ac) (ae) -- (ed);
				\node[rectangle, draw=none, fill=none] (TS2G3) at (0.75,-2.2) {$\mathsf{TS}_2(G_3)$};
			\end{scope}
			\begin{scope}[shift={(6,-3)}]
				\foreach \i/\x/\y in {ab/0/0,ac/1.5/0,ae/3/0,bd/0/-1.5,ed/1.5/-1.5} {
					\node (\i) at (\x, \y) {$\i$};
				}
				\draw[thick] (ab) -- (ac) (ab) -- (bd) (ae) -- (ac) (ae) -- (ed) (bd) -- (ed) (ab) edge[bend right=30] (ae);
				\node[rectangle, draw=none, fill=none] (TS2HG1G3) at (1.5,-2.2) {$\mathsf{TS}_2(H(G_1, G_3))$};
			\end{scope}
		\end{tikzpicture}
	\end{adjustbox}
	\caption{The graphs $G_1$, $G_3$, $H(G_1, G_3)$, and their corresponding $\mathsf{TS}_2$-graphs. Here $\mathsf{TS}_2(H(G_1, G_3)) \neq \mathsf{TS}_2(G_1) \cup \mathsf{TS}_2(G_3)$.}
	\label{fig:G1G3}
\end{figure}
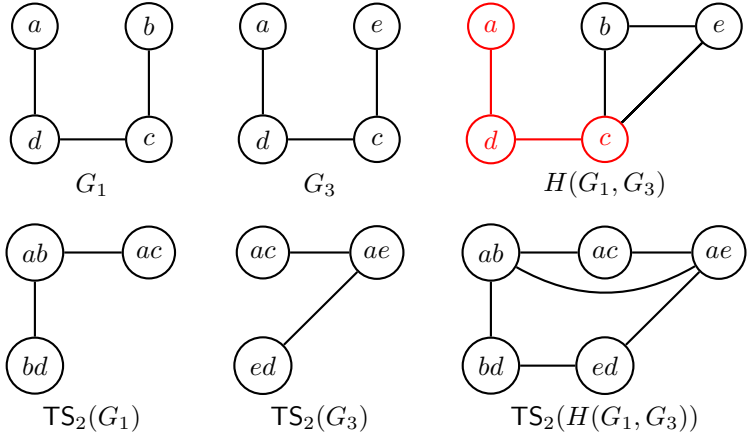

As the last example in this section, consider the graphs $G_4$ and $G_5$ as follows. $G_4$ is the cycle with edges $\{ae, eb, bc, cd, ad\}$ and $G_5$ is the graph with edges $\{ae, eb, bc, ag, eg, bg\}$.
$G_4$ and $G_5$ shares a common induced subgraph $H$ with $V(H) = \{a, e, b, c\}$ and $E(H) = \{ae, eb, bc\}$.
We have $E(H(G_4, G_5)) = \{ae, eb, bc, cd, ad, ag, eg, bg, dg\}$.
In this case, $\mathsf{TS}_2(H(G_4, G_5))$ is the (non-acyclic) graph with edges $\{ab-ac, ac-ce, ce-de, de-bd, ab-bd, ac-cg, ce-cg\}$ which is the union of $\mathsf{TS}_2(G_4)$ and $\mathsf{TS}_2(G_5)$.
(See \figurename~\ref{fig:G4G5}.)

\begin{figure}[!ht]
	\centering
	\begin{adjustbox}{max width=\textwidth}
	\begin{tikzpicture}[every node/.style={circle, draw, thick, minimum size=6mm}]
		\begin{scope}
		\foreach \i/\x/\y in {a/1/1,e/0/0,b/0/-1.3,c/2/-1.3,d/2/0} {
			\node (\i) at (\x, \y) {$\i$};
		}
		\draw[thick] (a) -- (e) -- (b) -- (c) -- (d) -- (a);
		\node[rectangle, draw=none, fill=none] (G4) at (1,-1.9) {$G_4$};
		\end{scope}
		\begin{scope}[shift={(3.5,0)}]
		\foreach \i/\x/\y in {a/1/1,e/0/0,b/0/-1.3,c/2/-1.3,g/2/0} {
			\node (\i) at (\x, \y) {$\i$};
		}
		\draw[thick] (a) -- (e) -- (b) -- (c) (a) -- (g) (e) -- (g) (b) -- (g);
		\node[rectangle, draw=none, fill=none] (G5) at (1,-1.9) {$G_5$};
		\end{scope}
		\begin{scope}[shift={(7,0)}]
		\foreach \i/\x/\y in {d/2/0,g/1/-0.3} {
			\node (\i) at (\x, \y) {$\i$};
		}
		\foreach \i/\x/\y in {a/1/1,e/0/0,b/0/-1.3,c/2/-1.3} {
			\node[red] (\i) at (\x, \y) {$\i$};
		}

		\draw[thick] (c) -- (d) -- (a) (a) -- (g) (e) -- (g) (b) -- (g) (d) -- (g);
		\draw[thick, red] (a) -- (e) -- (b) -- (c);
		\node[rectangle, draw=none, fill=none] (HG4G5) at (1,-1.9) {$H(G_4, G_5)$};
		\end{scope}
		\begin{scope}[shift={(0,-4)}]
		\foreach \i/\x/\y in {ab/1/1,ac/0/0,ce/0/-1.3,de/2/-1.3,bd/2/0} {
			\node (\i) at (\x, \y) {$\i$};
		}
		\draw[thick] (ab) -- (ac) -- (ce) -- (de) -- (bd) -- (ab);
		\node[rectangle, draw=none, fill=none] (TS2G4) at (1,-1.9) {$\mathsf{TS}_2(G_4)$};
		\end{scope}
		\begin{scope}[shift={(3.5,-4)}]
		\foreach \i/\x/\y in {ab/1/1,ac/0/0,ce/0/-1.3,cg/2/-1.3} {
			\node (\i) at (\x, \y) {$\i$};
		}
		\draw[thick] (ab) -- (ac) -- (ce) -- (cg) -- (ac);
		\node[rectangle, draw=none, fill=none] (TS2G5) at (1,-1.9) {$\mathsf{TS}_2(G_5)$};
		\end{scope}
		\begin{scope}[shift={(7,-4)}]
		\foreach \i/\x/\y in {ab/1/1,ac/0/0,ce/0/-1.3,de/2/-1.3,bd/2/0,cg/1/-0.3} {
			\node (\i) at (\x, \y) {$\i$};
		}
		\draw[thick] (ab) -- (ac) -- (ce) -- (de) -- (bd) -- (ab) (ce) -- (cg) -- (ac);
		\node[rectangle, draw=none, fill=none] (TS2G4G5) at (1,-1.9) {$\mathsf{TS}_2(H(G_4, G_5))$};
		\end{scope}

	\end{tikzpicture}
	\end{adjustbox}
	\caption{The graphs $G_4$, $G_5$, $H(G_4, G_5)$ and their corresponding (non-acyclic) $\mathsf{TS}_2$-graphs. Here $\mathsf{TS}_2(G_4, G_5) = \mathsf{TS}_2(G_4) \cup \mathsf{TS}_2(G_5)$.}
	\label{fig:G4G5}
\end{figure}
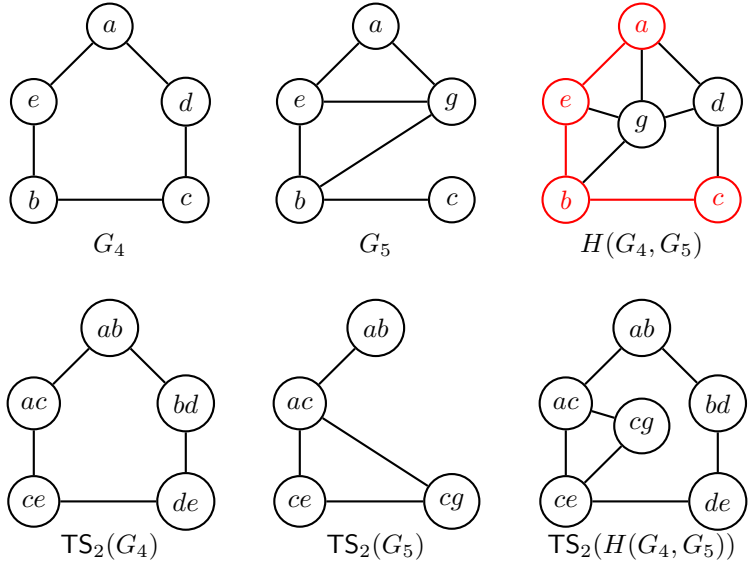

In the next proposition, we show how to compute the $\mathsf{TS}_k$-graph
of an $H$-join, generalizing the examples given above.
\begin{proposition}
	\label{overlap}
	Let $k \ge 2$ and let $G_1$ and $G_2$ be two $H$-consistent graphs.
	$\mathsf{TS}_k(H(G_1,G_2))$ is the union of $\mathsf{TS}_k(G_1)$,
 $\mathsf{TS}_k(G_2)$ and 
	for every pair of $k$-element independent sets $S_1$ in $G_1$ and $S_2$
	in $G_2$ satisfying
\begin{equation}
\label{star}
|S_1 \cap V(H) | =
     |S_2 \cap V(H) | = |S_1 \cap S_2| = k-1, 
\end{equation}
the edge between $S_1$ and $S_2$.
\end{proposition}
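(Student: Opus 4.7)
The plan is to first verify that both sides of the claimed equality have the same vertex set, and then perform a case analysis on edges according to which of $V(G_1),V(G_2)$ each endpoint independent set sits in.

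For the vertex sets, the key observation is that $H(G_1,G_2)$ contains every edge between $V(G_1)\setminus V(H)$ and $V(G_2)\setminus V(H)$, so no independent set of $H(G_1,G_2)$ can simultaneously meet both of these pieces; hence every size-$k$ independent set of $H(G_1,G_2)$ lies entirely in $V(G_1)$ or entirely in $V(G_2)$. Conversely, every size-$k$ independent set of $G_1$ or $G_2$ remains independent in $H(G_1,G_2)$; for this I check that $G_1$ and $G_2$ are induced subgraphs of $H(G_1,G_2)$, which uses $H$-consistency to handle the only potentially awkward case (edges of $E(G_2)$ that land inside $V(G_1)$ must have both endpoints in $V(H)$, where $G_1$ and $G_2$ agree). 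This matches the vertex sets on both sides, recovering the ``easy to verify'' claim mentioned just before the proposition.

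For the edges, I fix two adjacent independent sets $I,J$ of $H(G_1,G_2)$, write $I\setminus J=\{u\}$ and $J\setminus I=\{v\}$, and split into cases. If $I,J\subseteq V(G_1)$ (and symmetrically for $V(G_2)$), then $u,v\in V(G_1)$, so the edge $uv$ cannot be one of the new ``join'' edges and therefore lies in $E(G_1)$; this produces exactly an edge of $\mathsf{TS}_k(G_1)$. If instead $I,J$ cannot be placed on the same side, then the $k-1$ common vertices are forced into $V(G_1)\cap V(G_2)=V(H)$, so $u\in V(G_1)\setminus V(H)$ and $v\in V(G_2)\setminus V(H)$; the edge $uv$ is then guaranteed by the definition of the $H$-join, and the requirement $|I\cap J|=k-1$ translates directly into condition~\eqref{star}. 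The converse direction in each case is immediate from the same observations.

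I expect the main obstacle to be bookkeeping rather than any conceptual difficulty: the subtle point is making sure the case split covers all possibilities without double-counting. For instance, when $I,J\subseteq V(H)$, they sit in both $G_1$ and $G_2$, but in that situation \eqref{star} fails because $I\setminus V(H)=\emptyset$, so such an edge is contributed only to $\mathsf{TS}_k(G_1)$ and $\mathsf{TS}_k(G_2)$ (and taken once in their union) and not among the extra edges. Once this is verified and $G_1,G_2$ are shown to be induced subgraphs of $H(G_1,G_2)$, the rest is a direct comparison of the three edge types.
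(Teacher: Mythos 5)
Your proposal is correct and follows essentially the same route as the paper's proof: first match the vertex sets (the paper defers this to the "easy to verify" remark that you instead spell out via $H$-consistency), then classify edges by whether both endpoint independent sets lie on one side or are forced across the join, with condition~(\ref{star}) emerging exactly in the cross case. No gaps; the extra care you take with the $I,J\subseteq V(H)$ overlap and with showing $G_1,G_2$ are induced subgraphs of $H(G_1,G_2)$ only makes explicit what the paper leaves implicit.
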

\begin{proof}
		
		As remarked, the $k$-element independent sets of $H(G_1,G_2)$ are the same as the union of those of $G_1$
		and $G_2$. Therefore, $V(\mathsf{TS}_k(H(G_1,G_2))) = V(\mathsf{TS}_k(G_1)) \cup V(\mathsf{TS}_k(G_2))$.
		Next, consider an edge in $E(\mathsf{TS}_k(G_1))$ (respectively, $E(\mathsf{TS}_k(G_2))$). 
		It is a token-slide between two independent
		sets $S_1$ and $S_2$ in $G_1$ (respectively, $G_2$). This remains as a token-slide in $H(G_1,G_2)$.
		Therefore, $E(\mathsf{TS}_k(G_1)) \cup E(\mathsf{TS}_k(G_2)) \subseteq E(\mathsf{TS}_k(H(G_1,G_2)))$.
		Now, consider an edge in $E(\mathsf{TS}_k(H(G_1,G_2)))$ between two independent
		sets $S_1$ and $S_2$. If both of these are independent sets are in $G_1$ (respectively, $G_2$)
		then the edge is also present in $E(\mathsf{TS}_k(G_1))$ (respectively, $E(\mathsf{TS}_k(G_2)$)).
Otherwise, we may assume the edge in $E(\mathsf{TS}_k(H(G_1,G_2)))$ has as
endpoints an independent set $S_1$ in $G_1$ (but not $G_2$) and  an independent set $S_2$ in $G_2$
(but not $G_1$).
		We have $S_1 \cap S_2 \subset V(H)$
		and since $S_1$ and $S_2$ are adjacent $|S_1 \cap S_2| =k-1$. It follows that $|S_1 \cap V(H)| = |S_2 \cap V(H)| =k-1$
and so condition (\ref{star}) is satisfied. We have shown that each edge in
$E(\mathsf{TS}_k(H(G_1,G_2)))$ is either in $\mathsf{TS}_k(G_1)$,
 $\mathsf{TS}_k(G_2)$ or satisfies condition (\ref{star}), proving the proposition.
\end{proof}

For two $H$-consistent graphs $G_1$ and $G_2$, we say that $H(G_1,G_2)$ is \emph{$k$-crossing free} if there are no $k$-element independent
sets satisfying condition (\ref{star}) of Proposition~\ref{overlap}. 
	For example, one can verify that the graphs $H(G_1, G_2)$ in \figurename~\ref{fig:G1G2} and $H(G_4, G_5)$ in \figurename~\ref{fig:G4G5} are both $k$-crossing free, while the graph $H(G_1, G_3)$ in \figurename~\ref{fig:G1G3} is not.
The following
result will be used for constructing $\mathsf{TS}_k$-trees/forests.
\begin{corollary}\label{cor:overlap}
 Let $k \ge 2$ and let $G_1$ and $G_2$ be two $H$-consistent graphs. 
$H(G_1,G_2)$ is $k$-crossing free if and only if
\begin{equation}
\label{union}
\mathsf{TS}_k(H(G_1,G_2))= \mathsf{TS}_k(G_1) \cup
 \mathsf{TS}_k(G_2). 
\end{equation}
\end{corollary}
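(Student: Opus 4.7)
The plan is to derive this as an immediate consequence of Proposition~\ref{overlap}, which already does essentially all of the work. The proposition gives a precise description of $\mathsf{TS}_k(H(G_1,G_2))$: its vertex set is $V(\mathsf{TS}_k(G_1)) \cup V(\mathsf{TS}_k(G_2))$, and its edge set decomposes as $E(\mathsf{TS}_k(G_1)) \cup E(\mathsf{TS}_k(G_2)) \cup C$, where $C$ is the set of ``crossing edges,'' i.e., edges between pairs $(S_1, S_2)$ with $S_1$ a $k$-element independent set of $G_1$ and $S_2$ a $k$-element independent set of $G_2$ that together satisfy condition~(\ref{star}). By definition, $H(G_1,G_2)$ is $k$-crossing free precisely when no such pairs exist, i.e., when $C = \emptyset$.

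First, I would state explicitly that the vertex-set identity $V(\mathsf{TS}_k(H(G_1,G_2))) = V(\mathsf{TS}_k(G_1)) \cup V(\mathsf{TS}_k(G_2))$ already follows from the observation (made just before Proposition~\ref{overlap}) that the size-$k$ independent sets of $H(G_1,G_2)$ are exactly the union of those of $G_1$ and $G_2$. This holds regardless of whether $H(G_1,G_2)$ is $k$-crossing free, so the content of the corollary lies entirely at the level of edges.

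Next, for the forward direction I would argue: if $H(G_1,G_2)$ is $k$-crossing free, then Proposition~\ref{overlap} yields $E(\mathsf{TS}_k(H(G_1,G_2))) = E(\mathsf{TS}_k(G_1)) \cup E(\mathsf{TS}_k(G_2))$, and combined with the vertex-set identity above this gives~(\ref{union}). For the converse, assume~(\ref{union}) holds. If $H(G_1,G_2)$ were not $k$-crossing free, there would exist some pair $(S_1,S_2)$ satisfying~(\ref{star}); by Proposition~\ref{overlap} the corresponding edge lies in $E(\mathsf{TS}_k(H(G_1,G_2)))$. But this edge cannot belong to $E(\mathsf{TS}_k(G_1))$ (since $S_2 \notin V(\mathsf{TS}_k(G_1))$ as $S_2$ is not an independent set of $G_1$) nor to $E(\mathsf{TS}_k(G_2))$ by the symmetric reasoning, contradicting~(\ref{union}).

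There is essentially no obstacle here; the only point requiring any care is confirming that a crossing-edge endpoint $S_1$ genuinely fails to be a $k$-independent set of $G_2$ (and vice versa), which is needed to rule out the crossing edge appearing in $\mathsf{TS}_k(G_1) \cup \mathsf{TS}_k(G_2)$. This follows because condition~(\ref{star}) forces $|S_1 \setminus V(H)| = 1$, so $S_1$ contains a vertex lying in $V(G_1) \setminus V(H)$, which is not a vertex of $G_2$ at all, and symmetrically for $S_2$. Thus the proof reduces to a short two-sentence argument in each direction.
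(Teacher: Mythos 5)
Your proposal is correct and follows essentially the same route as the paper: the forward direction is read off from Proposition~\ref{overlap}, and the converse observes that a pair satisfying condition~(\ref{star}) yields an edge of $\mathsf{TS}_k(H(G_1,G_2))$ not present in $\mathsf{TS}_k(G_1)\cup\mathsf{TS}_k(G_2)$. Your extra remark verifying that such a crossing edge cannot lie in either $\mathsf{TS}_k(G_i)$ (because each endpoint contains a vertex outside $V(H)$, hence outside the other graph) is a small but welcome sharpening of the paper's terser argument.
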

\begin{proof}
If $H(G_1,G_2)$ is $k$-crossing free then (\ref{union}) follows
from Proposition \ref{overlap}. Otherwise their exist $k$-element independent sets
 $S_1$ is in $G_1$ and $S_2$ is in $G_2$ satisfying (\ref{star}).
This implies that $\mathsf{TS}_k(H(G_1,G_2))$ contains an additional
edge between $S_1$ and $S_2$.
\end{proof}
Therefore, if $H(G_1,G_2)$ is $k$-crossing free
and both
$\mathsf{TS}_k(G_1)$ and
 $\mathsf{TS}_k(G_2)$ are acyclic, then so is $\mathsf{TS}_k(H(G_1,G_2))$.
The reason for allowing $H$ to be empty in defining
an $H$-join is that the corollary then applies to
vertex disjoint graphs $G_1$ and $G_2$, since in this
case  $H(G_1,G_2)$ is trivially $k$-crossing free.
Therefore, we can create reconfiguration graphs 
that are forests from reconfiguration 
graphs that are trees (or forests).

The following result follows from the relationship between $H$-join
and $H$-decomposition discussed above.
\begin{corollary}
If $G$ can be $H$-decomposed into $G_1$ and $G_2$ 
and $H(G_1,G_2)$ is $k$-crossing free
then $\Tk(G)$ can be decomposed into $\mathsf{TS}_k(G_1) \cup \mathsf{TS}_k(G_2)$.
\end{corollary}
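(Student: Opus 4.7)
The plan is to chain together the two observations already established in the section: first the bidirectional equivalence between $H$-join and $H$-decomposition (noted just after Definition of $H$-decomposition), and then Corollary~\ref{cor:overlap} which computes $\mathsf{TS}_k$ of an $H$-join. In other words, I would treat this corollary as essentially a restatement of Corollary~\ref{cor:overlap} read from the $H$-decomposition side rather than from the $H$-join side.

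Concretely, the first step is to invoke the equivalence: since $G$ is $H$-decomposed into $G_1$ and $G_2$, we have $G = H(G_1,G_2)$, and $G_1$, $G_2$ are automatically $H$-consistent vertex-labelled graphs sharing the common induced subgraph $H$ on $W = V(G_1) \cap V(G_2)$. The second step is to apply Corollary~\ref{cor:overlap} directly: by hypothesis $H(G_1,G_2)$ is $k$-crossing free, so
\[
\mathsf{TS}_k(G) \;=\; \mathsf{TS}_k(H(G_1,G_2)) \;=\; \mathsf{TS}_k(G_1)\cup \mathsf{TS}_k(G_2).
\]

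The third step is to argue that this set-theoretic equality really is a decomposition of $\mathsf{TS}_k(G)$ in the sense defined in the paper. The two induced subgraphs are $\mathsf{TS}_k(G_1)$ and $\mathsf{TS}_k(G_2)$; their vertex sets are the size-$k$ independent sets of $G_1$ and $G_2$ respectively, and (as observed earlier in the section) these together give exactly the size-$k$ independent sets of $H(G_1,G_2) = G$. Their intersection consists precisely of those size-$k$ independent sets contained in $V(G_1)\cap V(G_2) = V(H)$, i.e.\ the size-$k$ independent sets of $H$; this intersection plays the role of the cut-set $W'$ in $\mathsf{TS}_k(G)$. Removing $W'$ leaves no edges between the two halves, because any such edge would correspond to a pair $(S_1,S_2)$ satisfying condition~(\ref{star}) with $S_1\subseteq V(G_1)\setminus V(H)\cup V(H)$ and $S_2\subseteq V(G_2)\setminus V(H)\cup V(H)$, contradicting the $k$-crossing free assumption.

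The main, and only, obstacle I anticipate is bookkeeping in Step 3: making sure that the empty-cut-set convention is correctly invoked when $V(H)$ contains no size-$k$ independent set (so that $\mathsf{TS}_k(G_1)$ and $\mathsf{TS}_k(G_2)$ are vertex-disjoint and the ``decomposition'' is of a disconnected $\mathsf{TS}_k(G)$ with $W'=\emptyset$), and that everything is consistent with the extended definition given just before the statement of the $H$-decomposition definition. Beyond that, the proof is a direct consequence of Corollary~\ref{cor:overlap}.
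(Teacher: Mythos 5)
Your proposal is correct and matches the paper's approach: the paper offers no explicit proof, simply noting that the corollary ``follows from the relationship between $H$-join and $H$-decomposition discussed above,'' i.e.\ exactly your Steps 1--2 of identifying $G$ with $H(G_1,G_2)$ and applying Corollary~\ref{cor:overlap}. Your Step 3, verifying that the graph-union equality really yields a decomposition in the cut-set sense (with $W'$ the size-$k$ independent sets of $H$, and no crossing edges by $k$-crossing freeness), is a careful elaboration the paper leaves implicit.
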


\section{Results on (Q2)}
\label{Q2}
We currently have no general necessary and sufficient conditions for
when a forest $F$ is a $\mathsf{TS}_k$-graph, but we present some partial
results in this section.
Firstly, we recall that in~\cite{AvisH22} it is shown that $P_n$
is a $\mathsf{TS}_k$-graph for all $n \ge 1$ and $k \ge 2$
and $K_{1,n}$ is a $\mathsf{TS}_k$-graph if and only if $n \le k$.
In this section, we show how to construct acyclic $\mathsf{TS}_k$-graphs 
from graphs that have a single edge using the join operation that
was introduced in Section~\ref{sec:H-join}.
We show that it gives an alternate method of constructing
$\mathsf{TS}_k$-graphs which are paths and stars.
Moreover, this operation can also be applied to construct
more general $\mathsf{TS}_k$ trees/forests, especially members of the classes $k$-ary trees and $D_{r,n,s}$.

\subsection{Paths and stars revisited}

Using just the base graphs and the $H$-join operation defined in Section~\ref{sec:H-join}, we can 
obtain large families of $\mathsf{TS}_k$ trees/forests.
We begin with paths.
For any $k \ge 2$, let
$J_k = \{b_1, \dots, b_k\}$ be an independent set of size $k$ and 
define the base graph $B_k^i = B_k(J_{k-2} \cup \{a_i,a_{i+1},a_{i+2}\},a_ia_{i+2})$
and let $G_2=B_k^i$.
\begin{proposition}
	\label{paths}
	For $ i \ge 2$,
	$G_{i}$ and $B_k^i$
	are $H$-consistent with $H$ being the independent set $J_{k-2} \cup \{a_i,a_{i+1}\}$.
	Define $G_{i+1}:=H(G_{i},B_k^i)$.
	Then 
	\[
	\mathsf{TS}_k(G_{i+1}) = \mathsf{TS}_k(G_{i}) \cup \mathsf{TS}_k(B_k^i) \simeq P_{i+1} .
	\]
\end{proposition}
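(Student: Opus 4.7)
The plan is to proceed by induction on $i\ge 2$, carrying a refined inductive hypothesis about the vertex set and neighborhood structure of $G_i$: namely $V(G_i)=J_{k-2}\cup\{a_1,\dots,a_{i+1}\}$, every vertex of $J_{k-2}$ is isolated from each $a_\ell$, the non-neighbors of every $a_j$ inside $\{a_1,\dots,a_{i+1}\}\setminus\{a_j\}$ are exactly $\{a_{j-1},a_{j+1}\}\cap\{a_1,\dots,a_{i+1}\}$, and $\mathsf{TS}_k(G_i)$ is the path with consecutive vertices $J_{k-2}\cup\{a_j,a_{j+1}\}$ for $j=1,\dots,i$. The base case $i=2$, with $G_2=B_k^1$ (whose unique edge is $a_1a_3$), follows by direct inspection: the two size-$k$ independent sets $J_{k-2}\cup\{a_1,a_2\}$ and $J_{k-2}\cup\{a_2,a_3\}$ form $K_2\simeq P_2$.

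For the inductive step, I first verify $H$-consistency: $V(G_i)\cap V(B_k^i)=J_{k-2}\cup\{a_i,a_{i+1}\}=V(H)$, the induced subgraph of $G_i$ on $V(H)$ has no edges by the neighborhood invariant (since $a_{i+1}$ lies among $a_i$'s prescribed non-neighbors and $J_{k-2}$ is isolated), and the induced subgraph of $B_k^i$ on $V(H)$ is also edgeless because its only edge $a_ia_{i+2}$ involves $a_{i+2}\notin V(H)$. Next, I argue that $H(G_i,B_k^i)$ is $k$-crossing free. The neighborhood invariant identifies the size-$k$ independent sets of $G_i$ as precisely $\{J_{k-2}\cup\{a_j,a_{j+1}\}:j=1,\dots,i\}$, while those of $B_k^i$ are exactly $J_{k-2}\cup\{a_i,a_{i+1}\}$ and $J_{k-2}\cup\{a_{i+1},a_{i+2}\}$. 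If some pair $(S_1,S_2)$ satisfied~\eqref{star}, the requirement $|S_2\cap V(H)|=k-1$ would exclude $S_2=J_{k-2}\cup\{a_i,a_{i+1}\}\subseteq V(H)$, forcing $S_2=J_{k-2}\cup\{a_{i+1},a_{i+2}\}$; then $|S_1\cap V(H)|=k-1$ together with $a_{i+2}\notin V(G_i)$ would force $S_1$ to meet $\{a_i,a_{i+1}\}$ in exactly one element, and since only consecutive pairs from $\{a_1,\dots,a_{i+1}\}$ are independent in $G_i$, the only possibility is $S_1=J_{k-2}\cup\{a_{i-1},a_i\}$. But then $S_1\cap S_2=J_{k-2}$, contradicting $|S_1\cap S_2|=k-1$.

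Since $H(G_i,B_k^i)$ is $k$-crossing free, Corollary~\ref{cor:overlap} yields $\mathsf{TS}_k(G_{i+1})=\mathsf{TS}_k(G_i)\cup\mathsf{TS}_k(B_k^i)$. By the inductive hypothesis, $\mathsf{TS}_k(G_i)\simeq P_i$ has endpoint $J_{k-2}\cup\{a_i,a_{i+1}\}$, which coincides with one endpoint of the single edge $\mathsf{TS}_k(B_k^i)$; the other endpoint $J_{k-2}\cup\{a_{i+1},a_{i+2}\}$ is new, so the union appends a pendant edge to $P_i$, producing $P_{i+1}$. A routine check---updating the neighborhood of each $a_j$ against the new vertex $a_{i+2}$ via the join rule ($a_{i+2}$ becomes adjacent to $\{a_1,\dots,a_{i-1}\}$) and the $B_k^i$-edge $a_ia_{i+2}$---shows that the neighborhood invariant propagates to $G_{i+1}$, closing the induction. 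The main obstacle is bookkeeping: the graph $G_i$ itself accumulates extra edges at each join step and is not a path, so the real work is maintaining the explicit non-neighbor invariant; once that is in hand, enumerating the size-$k$ independent sets and ruling out crossings become mechanical.
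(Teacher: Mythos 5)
Your proof is correct and follows essentially the same route as the paper's: induction on $i$, verification that $H(G_i,B_k^i)$ is $k$-crossing free by checking that condition~(\ref{star}) fails for every candidate pair, and then Corollary~\ref{cor:overlap} to append a pendant edge to the path. The only difference is that you explicitly carry the structural invariant that $G_i$ restricted to $\{a_1,\dots,a_{i+1}\}$ is the complement of a path (with $J_{k-2}$ isolated), which the paper leaves implicit in its inductive hypothesis and only records afterwards in the remark that $G_i \simeq \overline{P_{i+1}} \cup J_{k-2}$.
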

\begin{proof}
We will prove by induction, for $i \ge 2$, that $\mathsf{TS}_k(G_i)$ is the
path $P_i$ with vertices labelled $J_{k-2}\cup \{a_j,a_{j+1}\}, j=1,\dots,i$.
For the base case $i=2$, we observe that indeed
	$ \mathsf{TS}_k(B_k^i)$ is a $P_2$ 
	with vertices labelled $J_{k-2} \cup \{a_1,a_2\}$ and $J_{k-2} \cup \{a_2,a_3\}$.

For the inductive step we observe that, for $i \ge 2$, 
$G_{i}$ and $B_k^i$
are $H$-consistent with $H$ the independent set $J_{k-2} \cup \{a_i,a_{i+1}\}$.
To verify that $H(G_i, B^i_k)$ is $k$-crossing free, %
note that the only independent set we need to consider in
$B_k^i$ is $J_{k-2} \cup \{a_{i+1},a_{i+2}\}$. 
In the path $P_i$ 
which is $\mathsf{TS}_k(G_{i})$, the candidate independent sets are
$J_{k-2}\cup \{a_j,a_{j+1}\}, j=1,\dots,i$.
Their intersection with $B_k^i$ is $J_{k-2}$
which has cardinality $k-2$. Therefore condition (\ref{star}) of Proposition~\ref{overlap} is not satisfied, which indeed confirms that $H(G_i, B^i_k)$ is $k$-crossing free. %
We define $G_{i+1}:= H(G_{i},B_k^i)$.
By Corollary~\ref{cor:overlap}, %
$\mathsf{TS}_k(G_{i+1})$ is the union of the above labelled $P_i$ with
a $P_2$ with endpoints $J_{k-2} \cup \{a_i,a_{i+1}\}$ and $J_{k-2} \cup \{a_{i+1},a_{i+2}\}$.
This is the required $P_{i+1}$.
\end{proof}
An easy inductive argument based on the $H$-join in the proposition shows that,
for $i \ge 2$, $G_i$ is isomorphic to
$\overline P_{n+1} \cup J_{k-2}$, a result proved in Corollary 5(a) of \cite{AvisH22}.
(Observe that the vertex $a_{i+1}$ in $G_i$ is adjacent to every $a_j$ for $1 \leq j \leq i-1$.)

Next we consider graphs $G_i$ such that
$\mathsf{TS}_k(G_i)$ is the star
$K_{1,i}$. 
For $k \ge 2$ and $1 \le i \le k$, let
$I_k = \{a_1, \dots, a_k\}$ be an independent set of size $k$,
define the base graph $C_k^i = B_k(I_k+b_i,a_ib_i )$
and let $G_1=C_k^1$.
\begin{proposition}
        \label{stars}
        For $k \ge 2$ and $1 \le i \le k$,
        $G_{i}$ and $C_k^{i+1}$
        are $H$-consistent with $H$ being the independent set $I_k$.
        Define $G_{i+1}:=H(G_{i},C_k^{i+1})$.
        Then
        \[
        \mathsf{TS}_k(G_{i+1}) = \mathsf{TS}_k(G_{i}) \cup \mathsf{TS}_k(C_k^{i+1}) \simeq K_{1,i+1} .
        \]
\end{proposition}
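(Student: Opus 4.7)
The plan is to argue by induction on $i$, exactly mirroring the structure of the proof of Proposition~\ref{paths}, with the $H$-join operation serving as the constructive tool and Corollary~\ref{cor:overlap} supplying the $\mathsf{TS}_k$-graph identity once $k$-crossing freeness is verified. I maintain as inductive hypothesis the stronger labelled statement: the size-$k$ independent sets of $G_i$ are exactly $I_k$ together with the sets $(I_k - a_j) + b_j$ for $j=1,\dots,i$, and $\mathsf{TS}_k(G_i)$ is isomorphic to $K_{1,i}$ with centre $I_k$ and the remaining $i$ sets as leaves.

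For the base case $i=1$, $G_1=C_k^1=B_k(I_k+b_1,a_1b_1)$, so by the definition of a base graph, $\mathsf{TS}_k(G_1)=K_2\simeq K_{1,1}$ whose two vertices are $I_k$ and $(I_k-a_1)+b_1$. For the inductive step, I would first check $H$-consistency: the common vertex set of $G_i$ and $C_k^{i+1}$ is precisely $I_k$ (the $b_j$'s being introduced as distinct fresh vertices), and both graphs induce the empty graph on $I_k$, so $H$ is the independent set on $I_k$.

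Next I would verify that $H(G_i,C_k^{i+1})$ is $k$-crossing free. The size-$k$ independent sets of $C_k^{i+1}$ are $I_k$ and $S_2:=(I_k-a_{i+1})+b_{i+1}$, with only $S_2$ lying outside $V(H)$. For condition~(\ref{star}) of Proposition~\ref{overlap}, I would need some $S_1$ in $G_i$ with $|S_1\cap I_k|=k-1$ and $|S_1\cap S_2|=k-1$. By the induction hypothesis, $S_1=I_k$ fails the first requirement, while $S_1=(I_k-a_j)+b_j$ for $j\le i$ gives $S_1\cap S_2 = I_k-\{a_j,a_{i+1}\}$, which has only $k-2$ elements (the vertex $b_j\ne b_{i+1}$ is not shared). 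Thus~(\ref{star}) is never satisfied, and Corollary~\ref{cor:overlap} yields $\mathsf{TS}_k(G_{i+1})=\mathsf{TS}_k(G_i)\cup\mathsf{TS}_k(C_k^{i+1})$. Since $\mathsf{TS}_k(C_k^{i+1})$ is the edge between $I_k$ and $(I_k-a_{i+1})+b_{i+1}$, attaching it to the star $K_{1,i}$ centred at $I_k$ produces the claimed $K_{1,i+1}$, preserving the inductive labelling.

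The only genuine technical content is the crossing-freeness check, and the potential obstacle there is to be sure that the independent-set catalogue used in condition~(\ref{star}) is exhaustive. The $H$-join introduces complete bipartite edges between $V(G_i)\setminus I_k = \{b_1,\dots,b_i\}$ and $V(C_k^{i+1})\setminus I_k=\{b_{i+1}\}$, so any size-$k$ independent set of $H(G_i,C_k^{i+1})$ that is not contained in one of $V(G_i)$ or $V(C_k^{i+1})$ would have to use $b_{i+1}$ together with some $b_j$, $j\le i$, but these are now adjacent; hence every size-$k$ independent set lives inside one of the two factor graphs, which is exactly what the induction hypothesis already enumerates. With this observation the case analysis above is complete.
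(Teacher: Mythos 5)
Your proof is correct and follows essentially the same route as the paper's: induction on $i$ maintaining the labelled star $K_{1,i}$ centred at $I_k$, verifying $k$-crossing freeness by computing that the candidate intersections have cardinality $k-2$, and then invoking Corollary~\ref{cor:overlap}. Your extra checks (that $S_1=I_k$ fails condition~(\ref{star}) outright, and that the $H$-join creates no new size-$k$ independent sets) are sound refinements of details the paper leaves implicit.
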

\begin{proof}
We will prove by induction, for $i \ge 1$, that $\mathsf{TS}_k(G_i)$ is the
star $K_{1,i}$ with centre labelled $I_k$ and leaves labelled $I_k+b_j-a_j, j=1, \dots,i$.
For the base case $i=1$, we observe that indeed
        $ \mathsf{TS}_k(C_k^i)$ is a $K_{1,1}$
        with centre labelled $I_k$ and leaf labelled $I_k +b_1-a_1$.

For the inductive step we observe that, for $i \ge 1$,
$G_{i}$ and $C_k^{i+1}$
are $H$-consistent with $H$ the independent set $I_k$.
To verify that $H(G_i, C_k^{i+1})$ is $k$-crossing free, %
note that the only independent set we need to consider in
$C_k^{i+1}$ is $I_k + b_{i+1}-a_{i+1}$.
In the above labelled $K_{1,i}$
which is $\mathsf{TS}_k(G_{i})$, 
the candidate independent sets for condition (\ref{star}) of Proposition~\ref{overlap}
are  $I_k + b_{j}-a_j, j=1, \dots, i$. Their intersection with  $I_k + b_{i+1}-a_{i+1}$
has cardinality $k-2$. Therefore, condition (\ref{star}) is not satisfied.
We define $G_{i+1}:= H(G_{i},C_k^{i+1})$.
By Corollary~\ref{cor:overlap}, %
$\mathsf{TS}_k(G_{i+1})$ is the union of the above labelled $K_{1,i}$ and
a $K_{1,1}$ with centre also labelled $I_k$ and leaf labelled $I_k + b_{i+1}-a_{i+1}$.
This is the required $K_{1,i+1}$.
\end{proof}

\subsection{$k$-ary trees}

In this section, we show that for each $k \ge 2$, every $k$-ary tree 
is a $\Tkp$-graph (Proposition~\ref{k-ary tree}).
Next, we show that any tree $T$ is an induced subgraph of some \emph{$\mathsf{TS}_2$-forest} (Proposition~\ref{forestmerge}).
Moreover, we state and prove the necessary and sufficient conditions for $T$ to be an induced subgraph of some \emph{$\mathsf{TS}_2$-tree} (Proposition~\ref{prop:no-4tree}).
Additionally, when $T = K_{1,n}$, we describe a sufficient condition for $T$ to be an induced subgraph of some $\mathsf{TS}_k$-tree (Proposition~\ref{prop:tree-contain-K1n}).

We begin by defining a canonical vertex labelling.
In this subsection, for any integer $n$, define
$I_n := \{a_1,\dots,a_n\}$ and $J_n := \{b_1,\dots,b_n\}$.
\begin{definition}
\label{canon}
Let $k \ge 2$ and $G$ be a graph for which $T:=\Tkp(G)$ is a $k$-ary tree. 
We say that $G$ and $T$ are 
\emph{canonically labelled} if
\begin{itemize}
\item[(a)] the root of $T$ is labelled $I_{k+1}$,
\item[(b)] the $d \le k$ children of the root are labelled $I_{k+1}-a_i+b_i, i=1,\dots,d$, 
\item[(c)] the labels $b_j,~j=d+1,\dots,k$ (if any) are not used,
and 
\item[(d)] all other nodes in $T$ receive a label $S$ such that 
$|I_{k+1} \cap S| \le k-1$.
\end{itemize}
\end{definition}
It is clear that labelling $K_{1,d},~d \le k$ according to (a) and (b) with
root the centre of the star is
a canonical labelling. In this subsection, we will show that every $k$-ary tree
has canonical labelling hence proving it is a $\Tkp$-graph.
First, we give a lemma that shows how to combine
canonically labelled $k$-ary trees to get a larger $k$-ary tree that is canonically labelled.
\begin{lemma}
\label{kmerge}
For integers $k \ge 2$ and $1 \le i \le d \le k$, 
let $G_i$ be a graph for which $\Tkp(G_i)$ a canonically labelled $k$-ary tree.
We can construct a canonically labelled $k$-ary tree $T$ isomorphic to
the tree formed by 
choosing a new root and adjoining it to the root
of each $T_i$. 
\end{lemma}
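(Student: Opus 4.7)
The plan is to construct a graph $G$ realising the desired tree $T$ as $\mathsf{TS}_{k+1}(G)$ by combining a top-layer graph with appropriately renamed copies of each $G_i$ via iterated $H$-joins, exploiting the machinery of Section~\ref{sec:H-join}.

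First, I would apply the construction of Proposition~\ref{stars} (with $k$ replaced by $k+1$) to obtain a graph $G^*$ with vertex set $I_{k+1} \cup \{b_1, \ldots, b_d\}$ and edge set $\{a_i b_i : 1 \le i \le d\} \cup \{b_i b_j : 1 \le i < j \le d\}$, so that $\mathsf{TS}_{k+1}(G^*) \simeq K_{1,d}$ with centre labelled $I_{k+1}$ and leaves labelled $I_{k+1} - a_i + b_i$. This realises the top two levels of the canonical labelling of $T$. Second, for each $i \in \{1,\ldots,d\}$, I would produce $G_i'$ from $G_i$ by renaming the vertex of $G_i$ originally called $a_i$ to coincide with the shared vertex $b_i$ of $G^*$, and by renaming every other non-$I_{k+1}$ vertex of $G_i$ (including any vertex called $b_j$ in $T_i$'s canonical labelling) to fresh symbols disjoint from $V(G^*)$ and from the vertex sets of the other $G_\ell'$. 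A suitable preliminary choice of canonical labelling of each $T_i$---permuting the indexing so that the symbol $b_i$ is not used among the root-child labels of $T_i$, which is always achievable since $d_i \le k$---ensures that the root of $\mathsf{TS}_{k+1}(G_i') \simeq T_i$ becomes labelled $I_{k+1} - a_i + b_i$ while every other vertex of $\mathsf{TS}_{k+1}(G_i')$ receives a label $S$ with $|I_{k+1} \cap S| \le k-1$.

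Third, I would combine $G^*$ with each $G_i'$ iteratively via $H$-joins, taking $H_i$ to be the induced subgraph on the shared vertex set $(I_{k+1}-a_i) \cup \{b_i\}$, which is an independent set in both $G^*$ and $G_i'$. By Corollary~\ref{cor:overlap}, the resulting $\mathsf{TS}_{k+1}$-graph will be the union of $\mathsf{TS}_{k+1}(G^*)$ and the $\mathsf{TS}_{k+1}(G_i')$---precisely the desired tree $T$---provided each join is $k$-crossing-free. For this, I would check condition~(\ref{star}) of Proposition~\ref{overlap}: a violation would require two size-$(k+1)$ independent sets sharing $k$ vertices all lying in $V(H_i) \subset I_{k+1} \cup \{b_i\}$, and canonical condition (d) of $T_i$ together with the fresh renaming of private vertices rules out all such pairs except the intended token slide between the $i$-th leaf of $G^*$ and the root of $G_i'$. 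The main obstacle lies in step two: guaranteeing that every $T_i$ admits a canonical labelling whose root-child indices avoid the external index $i$, and simultaneously verifying that $k$-crossing-freeness survives the renaming across all $d$ joins; once these are in place, the four conditions of Definition~\ref{canon} hold for the resulting labelling of $T$ by construction.
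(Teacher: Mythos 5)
Your overall architecture is the same as the paper's: realise the top $K_{1,d}$ by a graph with centre labelled $I_{k+1}$ and leaves labelled $I_{k+1}-a_i+b_i$, relabel each $G_i$ so that the root of $T_i$ becomes $I_{k+1}-a_i+b_i$ while all of its private vertices get fresh names, and then attach the $T_i$ one at a time by $H$-joins over the independent set $I_{k+1}-a_i+b_i$, invoking Definition~\ref{canon}(d) to rule out condition~(\ref{star}). The gap is in your relabelling. Renaming only the vertex $a_i$ of $G_i$ to $b_i$, and everything else private to fresh symbols, breaks down precisely when the root of $T_i$ has a child labelled $I_{k+1}-a_i+b_i$ in its canonical labelling: that child does not contain $a_i$, so it is untouched by the substitution $a_i\mapsto b_i$, and after its own $b_i$ is given a fresh name $c$ it becomes $I_{k+1}-a_i+c$. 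This label has $k$ elements in common with $I_{k+1}$, so it violates Definition~\ref{canon}(d) for the tree you are building, and it genuinely crosses the centre: with $S_1=I_{k+1}$, $S_2=I_{k+1}-a_i+c$ and $V(H_i)=I_{k+1}-a_i+b_i$ one has $|S_1\cap V(H_i)|=|S_2\cap V(H_i)|=|S_1\cap S_2|=k$, and the $H$-join inserts the edge $a_ic$, so $\mathsf{TS}_{k+1}$ of the join acquires the edge $S_1S_2$ and hence a triangle through the leaf $I_{k+1}-a_i+b_i$.

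You anticipate this and propose to choose the canonical labelling of $T_i$ so that index $i$ does not occur among the root-children, claiming this is ``always achievable since $d_i\le k$.'' That justification fails when the root of $T_i$ has exactly $k$ children: a canonical labelling then uses all indices $1,\dots,k$, and $i\le d\le k$ cannot be avoided by permuting the $b$'s or the first $k$ of the $a$'s; you must bring $a_{k+1}$ into play. This is exactly what the paper's relabelling does: it sets $a_i\leftarrow a_{k+1}$ and $a_{k+1}\leftarrow b_i$, a cyclic substitution under which the root of $T_i$ still becomes $I_{k+1}-a_i+b_i$, but now every child of that root (each of which contains the old $a_{k+1}$) picks up $b_i$ and loses $a_i$, so every non-root label of $T_i$ ends up with at most $k-1$ elements of $I_{k+1}$, uniformly and with no case analysis. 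With that substitution in place of yours, the rest of your argument (fresh private names across the $d$ copies, crossing-freeness via Definition~\ref{canon}(d), and Corollary~\ref{cor:overlap}) goes through as in the paper.
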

\begin{proof}
The proof consists of showing that we can make a series of $H$-joins between the leaves
of a canonically labelled $K_{1,d}$ and the roots of the canonically labelled
trees $T_i, i=1,\dots,d$, after a suitable relabelling. 
Suppose the root of $T_i$ has $n_i \le k$
children.
We relabel the vertices in the underlying graphs as follows:
\begin{itemize}
\item[(i)] relabel vertices of the $G_i$ not in $I_{k+1} \cup J_k$ to be distinct, ie,
for $1 \le i \le j \le d$,
we have $V(G_i) \cap V(G_j) \subseteq I_{k+1} \cup J_k$,
\item[(ii)] for $i=1,\ldots d,~j=1, \dots,n_i$ set $b_j \leftarrow b_j^i$, where the $b_j^i$ were
previously unused, and
\item[(iii)] for $i=1,\ldots d$, set $a_i \leftarrow a_{k+1}$ and $a_{k+1} \leftarrow b_i$.
\end{itemize}
By an abuse of notation,
for simplicity we let for $i=1,\ldots,d$, $G_i$ and $T_i$ refer to the relabelled graphs
and trees.
Item (i) ensures that the only labels shared between two trees are in $I_{k+1} \cup J_k$,
(ii) ensures that all labels from $J_k$ in the $T_i$ are given unique labels to
avoid clashes, and (iii) gives the root of $T_i$ a correct label to be a child
of a new root labelled $I_k$. We note that after relabelling $b_i$ only appears
in $T_i$, $a_i$ does not appear in $T_i$ and
the only labels shared between the $T_i$ are in $I_k$.
Furthermore all tree vertices have unique labels.

Next take a canonically labelled graph $G^0$ such that $\Tkp(G^0) \simeq K_{1,d}$,
with the centre of the star labelled $I_{k+1}$.
For $i=1,\ldots, d$, we claim that the
$H$-join $G^i:=H(G^{i-1},G_i)$ is well-defined, %
$k$-crossing free,
and %
$\mathsf{TS}_{k+1}(G^i)$ is canonically labelled.
To see this, note at that iteration $i$, $V(G^{i-1}) \cap V(G_i) = I_{k+1}-a_i+b_i$
which is the label of the root of $T_i$ and a leaf of $\Tkp(G^{i-1})$.
Definition \ref{canon}(d) implies that condition (\ref{star}) of Proposition~\ref{overlap} %
is not satisfied. Therefore
by %
Corollary~\ref{cor:overlap},
$\Tkp(G^i)$ is obtained from $\Tkp(G^{i-1})$
by appending $T_i$ to the corresponding leaf in $\Tkp(G^{i-1})$.
The conditions of Definition \ref{canon} are satisfied so $\Tkp(G^i)$ is canonically labelled.
At the end of iteration $d$, $T:=\Tkp(G^d)$ is the required tree.
\end{proof}
The construction described in the proof is illustrated in \figurename~\ref{fig:kmerge}.
\begin{figure}[!ht]
\centering
\includegraphics[width=0.7\textwidth]{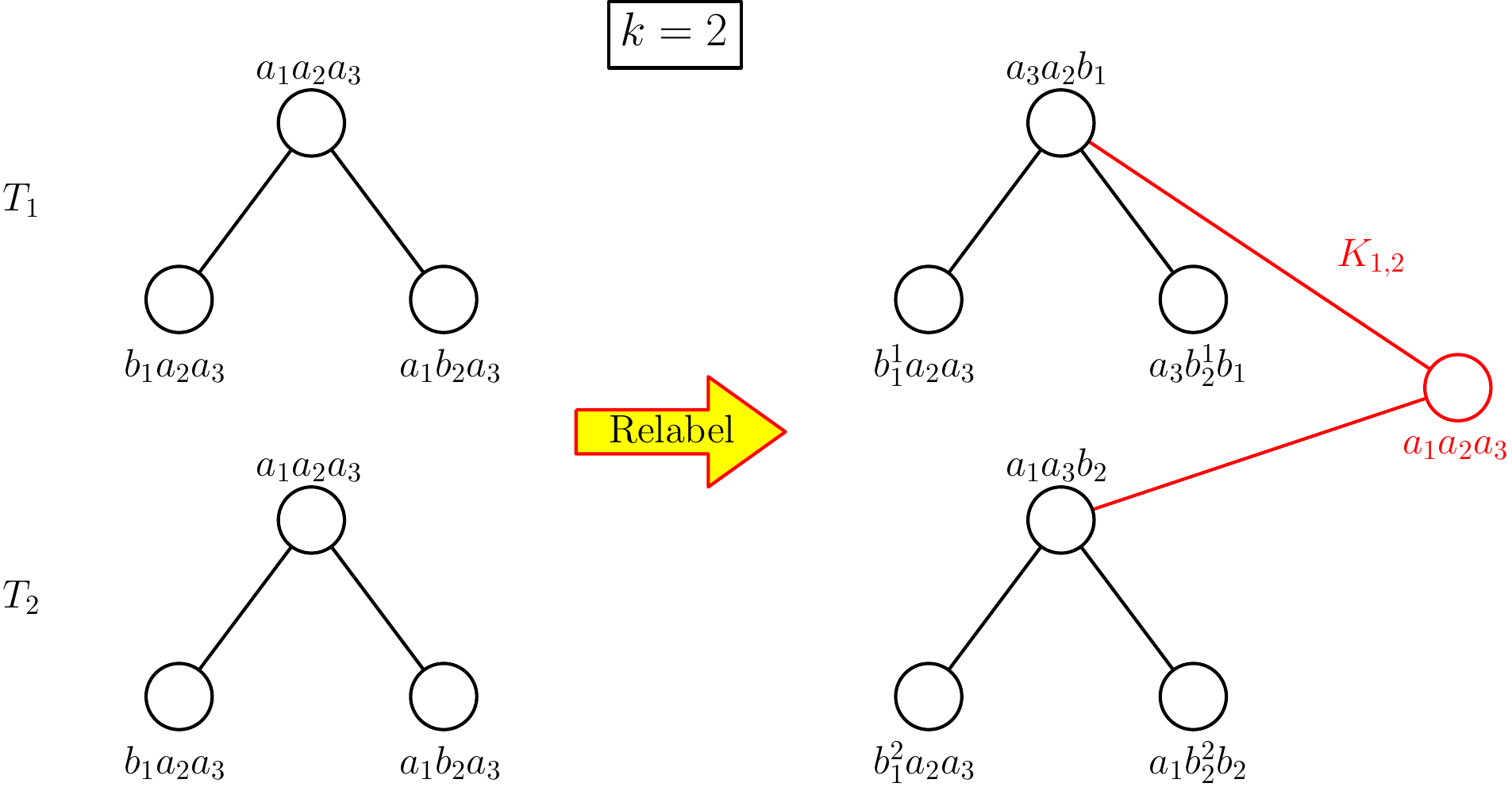}
                \caption{Construction of $D_{2,3,2}$ from two $K_{1,2}$.}
                \label{fig:kmerge}
\end{figure}
We may now prove the main result of this section.
\begin{proposition}\label{k-ary tree}
For every $k$-ary tree $T$, there is a canonically labelled graph $G$
such that $T \simeq \Tkp(G)$.
\end{proposition}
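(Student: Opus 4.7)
The plan is to prove the proposition by induction on the number of vertices of $T$, using Lemma~\ref{kmerge} as the combining step. The lemma already does the technical work of constructing canonically labelled joins; what remains is to peel off the root and recurse on the subtrees hanging from it.

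For the base case, I would take $T$ to be the single-vertex tree. Choose $G$ to be the edgeless graph on the vertex set $I_{k+1}=\{a_1,\dots,a_{k+1}\}$. Then the only size-$(k+1)$ independent set of $G$ is $I_{k+1}$ itself, so $\Tkp(G)$ is a single node labelled $I_{k+1}$, satisfying Definition~\ref{canon} vacuously (items (b)--(d) impose no condition when $d=0$).

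For the inductive step, let $T$ be a $k$-ary tree on $n\ge 2$ vertices with root $r$ having $d$ children, where $1\le d\le k$. Let $T_1,\dots,T_d$ be the subtrees of $T$ rooted at these children; each $T_i$ is itself a $k$-ary tree with strictly fewer than $n$ vertices, so by the inductive hypothesis there is a canonically labelled graph $G_i$ with $\Tkp(G_i)\simeq T_i$. Now apply Lemma~\ref{kmerge} to $G_1,\dots,G_d$: after the relabelling prescribed in the lemma and $H$-joining with a canonical $K_{1,d}$ through its $d$ leaves, we obtain a canonically labelled graph $G$ whose $\Tkp$-graph is isomorphic to the tree obtained by attaching a new root to the roots of $T_1,\dots,T_d$. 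This tree is isomorphic to $T$, completing the induction.

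The only real subtlety is making sure the hypotheses of Lemma~\ref{kmerge} are met at each step, and this is already handled by that lemma: because the canonical labelling forces every non-root, non-leaf node to share at most $k-1$ labels with $I_{k+1}$, condition~(\ref{star}) of Proposition~\ref{overlap} cannot be triggered along the merging leaves, so each successive $H$-join is $k$-crossing free. Thus no serious obstacle arises beyond invoking the lemma; the main conceptual point is simply recognising that the recursive structure of a $k$-ary tree matches exactly the combining operation Lemma~\ref{kmerge} performs.
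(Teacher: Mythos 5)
Your proof is correct and follows essentially the same route as the paper: peel off the root, apply the inductive hypothesis to the $d$ subtrees, and combine via Lemma~\ref{kmerge}. The only cosmetic differences are that you induct on the number of vertices rather than the height and take the single-vertex tree (rather than the star $K_{1,d}$) as the base case, both of which are harmless.
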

\begin{proof}
Suppose that the root $r$ of $T$ has $d \le k$ children.
We prove the proposition by induction on the height $t$ of $T$,
which is the length of the longest path to a leaf from the root.
If $t = 1$ then $T \simeq K_{1,d}$ and so has a canonically representation
as described following Definition \ref{canon}.
Otherwise, by deleting $r$ we obtain $d$ subtrees $T_i, i=1,\ldots,d$, which are
also $k$-ary trees, with height less than $t$. Therefore, by induction each
$T_i$ can be represented by a canonically labelled graph $G_i$.
It follows from Proposition \ref{kmerge} that we can perform $d$
$H$-joins to obtain a canonically labelled graph $G$ for which
$T \simeq \Tkp(G)$.
\end{proof}

As noted in Section~4 of \cite{AvisH22},
$K_{1,{k+1}}$ is an example of a $k$-ary tree that is not an $\Tk$-graph 
so the proposition is tight. 
Nevertheless, if we add a sufficient number
of isolated vertices to $K_{1,t}$, for 
$t > k$,
it becomes a $\Tw$-graph---a result we will now prove in general.
We will need a special labelling of a tree
that will be defined next.
\begin{definition}
\label{well}
A tree $T$ is
\emph{well-labelled} if
\begin{itemize}
\item[(a)] the root $r$ of $T$ is labelled $ab$,
\item[(b)] the $d$ children of $r$ have roots labelled
$r_i=bc_i, i=1,\dots,d-1$
and $r_d=ac_d$,
\item[(c)] the only labels containing $a$ and $b$ are 
$ab,ac_d,bc_i,1 \le i \le d-1$, 
and
\item[(d)] for $i=1,\ldots,d$ label $c_i$ only occurs in the subtree with root $r_i$.
\end{itemize}
\end{definition}
We note that there is nothing special about the
ordering of the subtrees of $r$. The subtree rooted at $r_i$ can
play the role of $r_d$ by relabelling those two subtrees with
the exchanges $a \leftrightarrow b$ and 
$c_i \leftrightarrow c_d$, which leaves $T$ well-labelled.
As an example, for $d \ge 1$ we can well-label $K_{1,d}$ simply by using (a) and (b).
Consider the graph $G$ defined by $V(G)=\{a,b\} \cup \{c_i:1 \le i \le d\}$
and $E(G)=\{ac_i,c_ic_d:1 \le i \le d-1\} \cup \{bc_d\}$. 
Furthermore let $J=\{c_ic_j : 1 \le i < j \le d-1\}$. Then it is not hard to 
verify that $\Tw(G) \simeq K_{1,d} + (d-1)(d-2)K_1$, where 
the $K_{1,d}$ is well-labelled and the $K_1$
are labelled by the set $J$. This motivates the following definition.
\begin{definition}
A tree $T$ is \emph{well-labelled by a labelled graph $G$} if there is an integer $n$
such that $\Tw(G) \simeq T + n K_1$ and $T$ is well-labelled.
\end{definition}
We now show the following general result.
\begin{proposition}
\label{forestmerge}
For every tree $T$ there is a 
graph $G$ and integer $n$ 
such that $T$ is well-labelled by $G$ and $\Tw(G) \simeq T + nK_1$.
\end{proposition}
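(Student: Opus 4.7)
The plan is to proceed by induction on the number of vertices of $T$, rooting $T$ at a chosen vertex and invoking the $H$-join machinery of Section~\ref{sec:H-join}.

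For the base case take $T \simeq K_{1,d}$ with $d \ge 0$. When $d = 0$ the tree is a single vertex and the edgeless graph on $\{a,b\}$ gives $\mathsf{TS}_2(G) = \{\{a,b\}\}$, trivially well-labelled by condition (a). For $d \ge 1$ the explicit construction described immediately before the proposition produces a graph $G$ with $\mathsf{TS}_2(G) \simeq K_{1,d} + n K_1$ in which the tree part is well-labelled by (a)--(b), and the isolated vertices are labelled by the set $J = \{c_i c_j : 1 \le i < j \le d-1\}$.

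For the inductive step I root $T$ at a vertex $r$ of degree $d \ge 1$ and consider the subtrees $T_1,\dots,T_d$ rooted at the children $r_1,\dots,r_d$ of $r$. By the inductive hypothesis each $T_i$ is well-labelled by some graph $G_i$ with $\mathsf{TS}_2(G_i) \simeq T_i + n_i K_1$. I would relabel the $G_i$ so that the root label of each $T_i$ becomes $bc_i$ for $i < d$ or $ac_d$ for $i = d$, and so that the remaining vertex names in $\bigcup_i V(G_i)$ are fresh (pairwise disjoint and disjoint from the vertex labels $a,b,c_1,\dots,c_d$ used by the base graph $G^0$ that well-labels $K_{1,d}$). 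I then build $G^i := H(G^{i-1}, G_i)$ iteratively for $i = 1,\dots,d$, with $V(H) = V(G^{i-1}) \cap V(G_i)$ equal to the two-vertex root label. Provided each $H$-join is shown to be $2$-crossing free, Corollary~\ref{cor:overlap} yields $\mathsf{TS}_2(G^i) = \mathsf{TS}_2(G^{i-1}) \cup \mathsf{TS}_2(G_i)$, so $G := G^d$ well-labels $T$ with $\mathsf{TS}_2(G) \simeq T + nK_1$ for some $n$.

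The main obstacle is precisely the $2$-crossing free check at each step. A violating pair would consist of size-$2$ independent sets $S_1$ in $G^{i-1}$ and $S_2$ in $G_i$ sharing a single vertex lying in $V(H)$, i.e., a pair satisfying condition~(\ref{star}). Condition (c) of well-labelling restricts labels containing the root letters $a$ and $b$ to a small explicit set, and (d) confines each $c_i$ to the subtree at $r_i$; together they rule out crossings involving only tree labels. The delicate point is the isolated-vertex labels $c_i c_j$ contributed by the base graph, which share a letter with the fresh labels of any $G_i$ for $i < d$. I would handle this by ordering the attachments so that any collision-prone subtree is placed at the $ac_d$ position (whose letter $c_d$ does not appear in $J$), and by strengthening the inductive hypothesis to record that the fresh letters introduced inside each subtree do not coincide with letters appearing in the existing isolated-vertex labels. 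With this bookkeeping in place the induction goes through and produces the required $G$.
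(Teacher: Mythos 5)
Your overall frame (induction on the number of vertices, stars as base cases, assembly via $H$-joins and Corollary~\ref{cor:overlap}) is the right one, but your inductive step uses a genuinely different decomposition from the paper's, and it does not work. You attach all $d$ subtrees $T_1,\dots,T_d$ to a well-labelled star $K_{1,d}$ by $d$ successive $H$-joins, each taken over the two-vertex, edgeless root label of $T_i$. These joins are not $2$-crossing free, and the failures are not confined to the isolated-vertex labels you flag. By Definition~\ref{well}(b), every child of the root $r_i$ of $T_i$ carries a label containing one of the two letters of $r_i$'s label; after your relabelling of that label to $bc_i$, if $r_i$ has at least two children then some child is labelled $\{b,c'\}$ with $c'$ fresh, no matter which of the two orientations of the relabelling you pick. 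Then $S_1=\{a,b\}$ (the global root, an independent set of $G^{i-1}$ only, since $a\notin V(G_i)$) and $S_2=\{b,c'\}$ (an independent set of $G_i$ only) satisfy condition~(\ref{star}) with respect to $V(H)=\{b,c_i\}$, so Proposition~\ref{overlap} forces an extra edge $ab$--$bc'$, which together with $ab$--$bc_i$--$bc'$ is a triangle. The identical computation with $V(H)=\{a,c_d\}$ shows the $ac_d$ slot is no safer, so reordering the subtrees cannot rescue the argument; the construction already fails for a height-two tree in which some child of the root has two children. Similarly, the crossings between the isolated labels $c_ic_j$ of the base star and the child labels $c_ic'$ inside $T_i$ involve the shared letter $c_i\in V(H)$ itself, so no strengthening of the inductive hypothesis about \emph{fresh} letters can remove them.

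The paper avoids all of this by never joining along a two-vertex edgeless overlap and by never performing several joins at the same root. It peels off a single subtree $T_d$ of height at least one, forms $T^1$ ($T$ with $T_d$ deleted and a pendant leaf added at $r$) and $T^2$ ($T_d$ with a pendant leaf added at its root), well-labels each by induction, relabels so that the two pendant edges carry matching labels, and performs one $H$-join over the three-vertex set $H=\{a,b,c_d\}$ inducing the edge $bc_d$. This identifies the two pendant edges into the tree edge $rr_d$, and conditions (c) and (d) of Definition~\ref{well}, together with the otherwise disjoint vertex sets, are what rule out condition~(\ref{star}). That pendant-edge identification is the idea your proposal is missing; without it, or some comparable mechanism controlling which independent sets meet $V(H)$ in exactly one vertex, the induction does not go through.
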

\begin{proof}
The proof is by induction on $N$, the number of nodes in a given tree $T$.
As noted above, the proposition is true for all stars $K_{1,t}$
and these act as base cases.
For the inductive step, assume the proposition is true for all trees on
$N$ nodes and consider a tree $T$ with $N+1$ nodes. If $T$ is a star
we are done.
Otherwise, let $r$ be the root of $T$ and assume $r$ has degree $d$
with its children $r_i$ being roots of subtrees $T_i, 1, \dots, d$. 
We may also assume that 
$T_d$ is a subtree of $T$ with height at least one.
We now construct two trees from $T$. The first, $T^1$ consists of $T$
with subtree $T_d$ deleted and a pendant
vertex added to its root $r$. The second, $T^2$ consists of $T_d$
with a pendant vertex added to its root $r_d$.
By induction, there are integers $n_1,n_2$ and graphs $G^1,G^2$ which
well-label $T^1$ and $T^2$ such 
that $\Tw(G^1) \simeq T^1 + n_1 K_1$ and $\Tw(G^2) \simeq T^2 + n_2 K_1$.
Apart from the vertex labels used in Definition \ref{well}, we may assume
the vertex labels in $G^1$ and $G^2$ are different.

We will show that $G^1$ and a relabelled $G^2$ 
can be $H$-joined and that this will identify
the pendant edges added to $T^1$ and $T^2$ to give us back $T$.
In $T^1$ we note that root $r$ is labelled $ab$, and by
relabelling subtree roots if necessary, that the added pendent vertex 
can be labelled $ac_d$. In $T^2$ the root $r_d$ is also labelled $ab$
and we can again assume the added pendant vertex is labelled $ac_d$.
In $T^2$ we interchange the labels $b \leftrightarrow c_d$ and
set $c_i \leftarrow c_i^{\prime} ,i=1,\ldots,d-1$, for labels $c_i^{\prime}$ that are unused in
either $T^1$ or $T^2$. 
Let $G^3$ and $T^3$ denote the relabelled $G^2$ and
$T^2$.
Setting $H=\{a,b,c_d \}$,
we have $V(G^1) \cap V(G^3)= H$.
$H$ induces the same
subgraph, containing the single edge %
$bc_d$,
in both $G^1$ and $G^3$.
$G^1$ and $G^3$ are $H$-consistent and
since $k=2$ and their vertex sets
are otherwise disjoint, condition (\ref{star}) of Proposition \ref{overlap}
is not satisfied. Let $G^4=H(G^1,G^3)$.
Applying Corollary~\ref{cor:overlap} %
we have that 
\[
T^4 := \Tw(G^4) \simeq \Tw(G^1) \cup \Tw(G^3) \simeq \{ T^1 + n_1 K_1\}
 \cup \{ T^3 + n_2 K_1\} \simeq T + (n_1+n_2) K_1.
\]
is well-labelled by $G^4$.
This proves the proposition. 
\end{proof}
The proof of the proposition is illustrated in Figure \ref{fig:starmerge}.
\begin{figure}[!ht]
\centering
\includegraphics[width=0.7\textwidth]{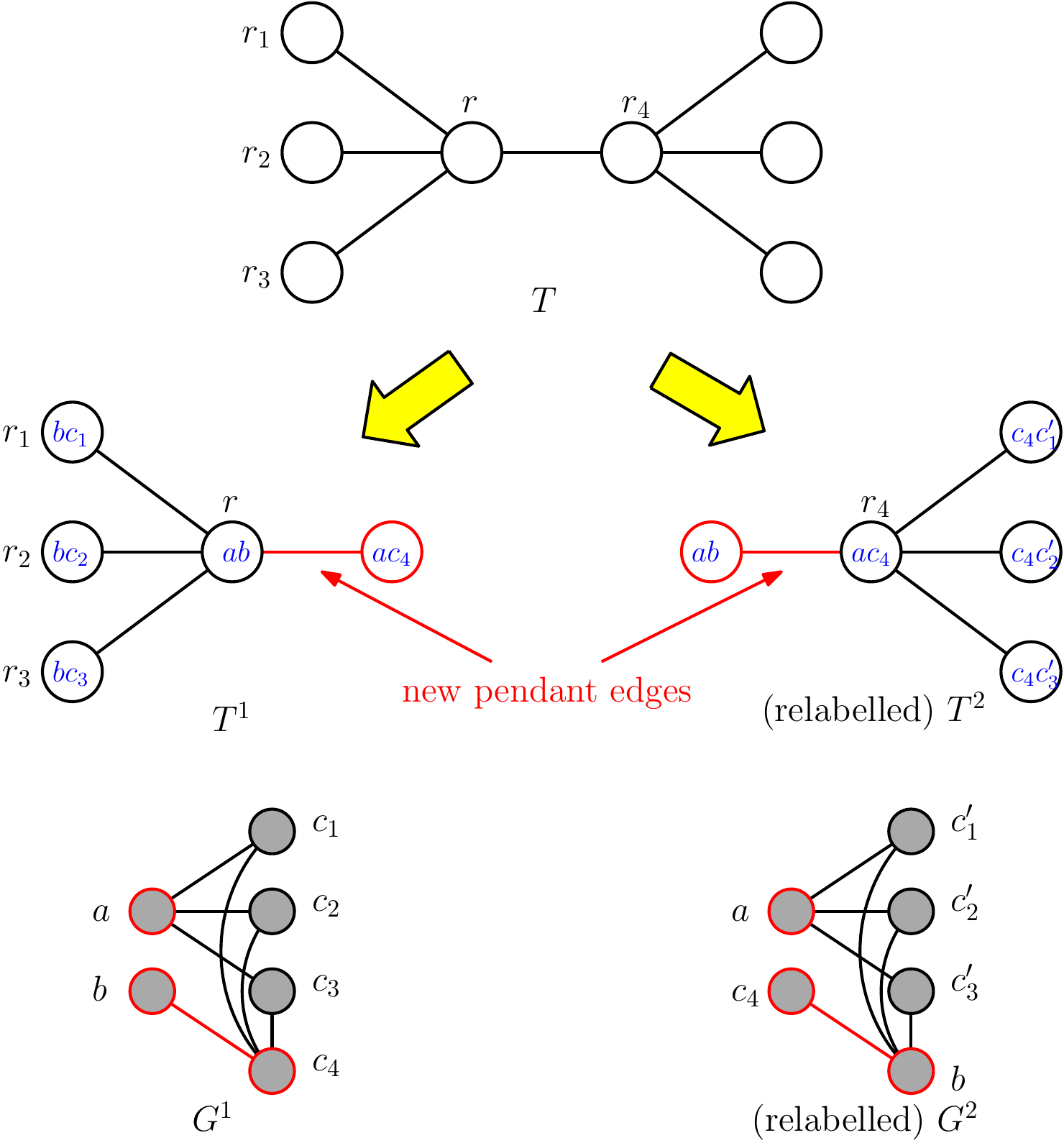}
\caption{Illustrating Proposition \ref{forestmerge}.}
\label{fig:starmerge}
\end{figure}
The proposition tells us that for every tree $T$ there is a
graph $G$ for which $\Tw(G)$ is forest containing $T$ as an
induced subgraph. 
Therefore, there can be no forbidden induced subgraph
characterization of which \emph{forests} are $\Tw$-graphs.
However, this does not imply that there can be no forbidden induced subgraph
characterization of which \emph{trees} are $\Tw$-graphs.
Indeed, in the next propositions, we present some of such characterizations.
\begin{figure}[!ht]
	\centering
	\includegraphics[width=0.7\textwidth]{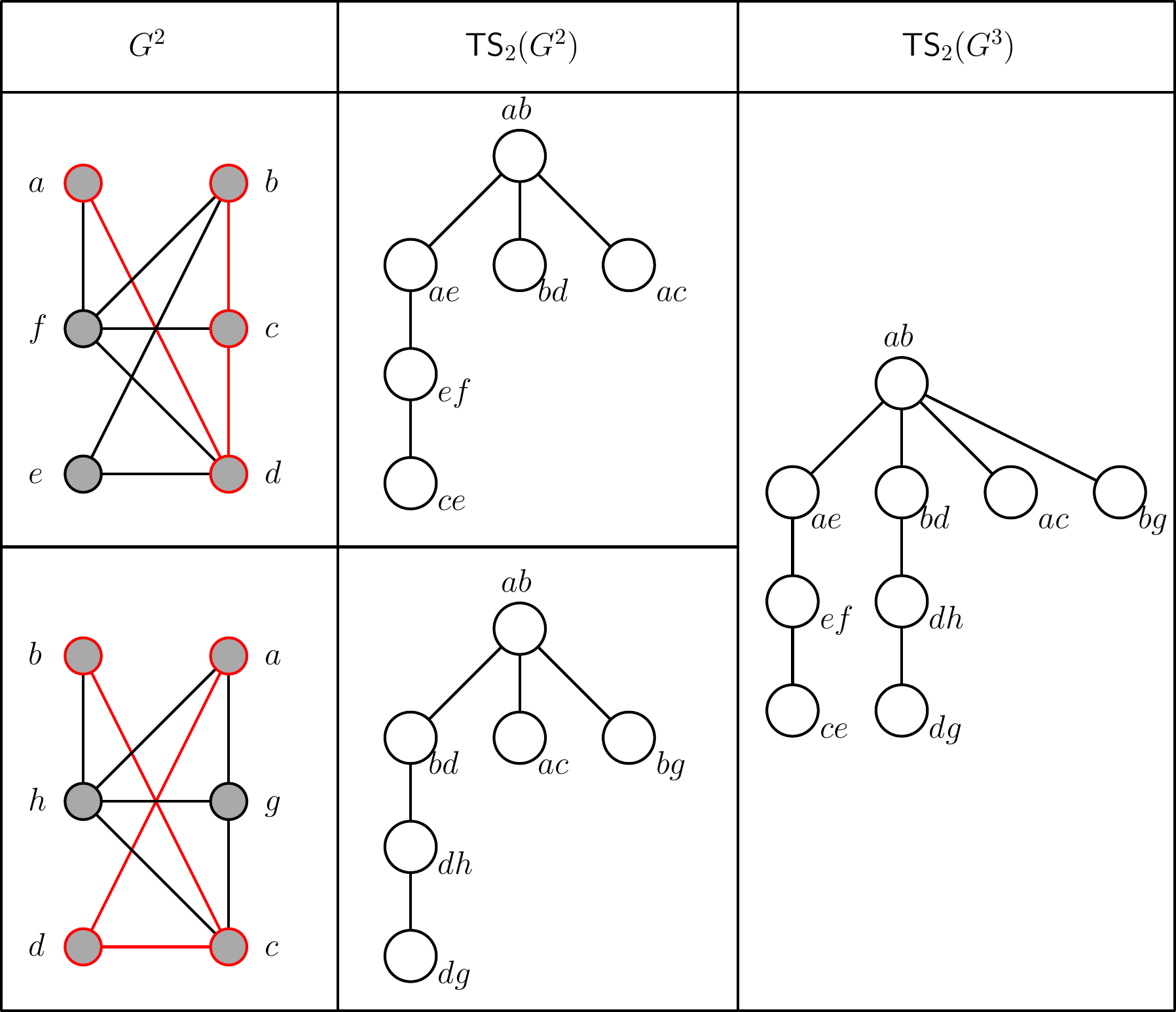}
	\caption{Taking $H$-join of two copies of $G^2$, where $H$ is the path $adcb$, results a graph $G^3$ such that $\Tw(G^3)$
		is isomorphic to a $P_7$ with two pendant vertices attached to the
		midpoint of the path.}
	\label{fig:3tree}
\end{figure}

\begin{proposition}\label{prop:no-4tree}
	Let $T$ be a tree.
	Then there exists a $\mathsf{TS}_2$-tree containing $T$ if and only if $T$ is a $3$-ary tree.
\end{proposition}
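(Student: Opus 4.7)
The plan is to split into the two directions. For the \emph{forward direction}, suppose $T$ is a $3$-ary tree, equivalently $\Delta(T)\le 4$. I would argue by induction on $|V(T)|$, mirroring Proposition~\ref{forestmerge} but refining the output from a forest to a tree. The base cases are stars $K_{1,d}$ with $d\le 4$: for $d\le 3$ these are $\Tw$-graphs by Proposition~\ref{stars}, and for $d=4$ the construction in \figurename~\ref{fig:3tree} produces a $\Tw$-tree whose midpoint has degree $4$, hence contains $K_{1,4}$ as an induced subgraph. For the inductive step, I would pick a vertex of $T$ of degree at most $3$, split $T$ into two smaller $3$-ary subtrees as in Proposition~\ref{forestmerge}, realize each piece by induction, and then $H$-join; after checking $2$-crossing freeness, Corollary~\ref{cor:overlap} guarantees that the resulting $\Tw$-graph is the union of two $\Tw$-trees glued at a single vertex, i.e., again a tree containing $T$ induced.

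For the \emph{reverse direction}, I argue the contrapositive: if $T$ has a vertex of degree at least $5$, then no $\Tw$-tree can contain $T$ as an induced subgraph. Since an induced subgraph of a tree preserves the degree of each vertex in the inherited neighborhood, it suffices to show $\Delta(\Tw(G))\le 4$ whenever $\Tw(G)$ is a tree. Suppose $S=\{x,y\}$ has $\deg_{\Tw(G)}(S)\ge 5$, and set $U = N(x)\setminus N[y]$, $V = N(y)\setminus N[x]$, so $|U|+|V|\ge 5$; WLOG $|U|\ge 3$. Since $\Tw(G)$ has no triangle, both $U$ and $V$ must be independent in $G$ (any edge $x_ix_j$ or $y_iy_j$ would place $S$ on a triangle).

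In the subcase $|V|\ge 1$, for each $x_i\in U$ and $y_j\in V$, non-adjacency $x_iy_j\notin E(G)$ would make $\{x,y\}, \{x_i,y\}, \{x_i,y_j\}, \{x,y_j\}$ a $4$-cycle in $\Tw(G)$; hence $U$ and $V$ span a complete bipartite subgraph of $G$. Consequently, sliding from any $\{x_i,x_j\}$ into $\{x\}\cup V$ produces no independent set, so $\{x_i,x_j\}$ can be $\Tw$-connected to the rest of the tree only through some ``external'' neighbor of $x_i$ or $x_j$ lying outside $\{x\}\cup V$. A case analysis shows that two such external neighbors $w\neq w'$ of distinct $x_i,x_j$ close up a $4$-cycle on $\{x_i,x_j\}, \{w,x_j\}, \{w,w'\}, \{x_i,w'\}$, whereas a single external neighbor attached to one $x_i$ cannot connect every pair $\{x_k,x_\ell\}$ with $k,\ell\neq i$, leaving such a pair disconnected from $\Tw(G)$. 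Either alternative contradicts $\Tw(G)$ being a tree. The remaining subcase $|V|=0$ with $|U|\ge 5$ is handled in parallel: the same $4$-cycle reasoning confines any common $G$-neighbor of distinct $x_i,x_j$ besides $x$ to $N(y)$, but sliding via such a vertex again fails to produce an independent set, so the same cycle-or-disconnection dichotomy applies.

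The main obstacle will be the reverse direction, in particular the careful bookkeeping that rules out every way to reconnect the quadratically many independent sets $\{x_i,x_j\}$ into $\Tw(G)$ without inducing a $4$-cycle. The construction of \figurename~\ref{fig:3tree} shows that degree $4$ is actually achievable, so the bound is tight.
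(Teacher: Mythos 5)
Your forward direction is essentially the paper's: both reduce to realizing the star base cases $K_{1,t}$, $t\le 4$, inside a $\Tw$-tree (via $D_{1,3,2}$ and the construction of \figurename~\ref{fig:3tree}) and then reuse the $H$-join machinery of Proposition~\ref{forestmerge}, at about the same level of detail, so nothing further to say there.

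The reverse direction is where you diverge, and it contains a genuine gap. Your reduction to ``every $\Tw$-tree has maximum degree at most $4$'' is fine, as are the pigeonhole step, the independence of $U$ and $V$, the complete bipartite join between them, and the observation that each pair $\{x_i,x_j\}\subseteq U$ is a vertex of $\Tw(G)$ reachable only via ``external'' neighbours. But the decisive step --- ``a case analysis shows'' that two external neighbours close the $4$-cycle $\{x_i,x_j\},\{w,x_j\},\{w,w'\},\{x_i,w'\}$ --- is asserted rather than proved, and that cycle does not exist in general: it needs $\{w,x_j\}$, $\{w,w'\}$ and $\{x_i,w'\}$ all to be independent sets, i.e.\ $w\notin N(x_j)$, $ww'\notin E(G)$ and $w'\notin N(x_i)$, none of which is guaranteed. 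The escape configurations (e.g.\ $ww'\in E(G)$, or a single external vertex adjacent to two or three of the $x_i$) are exactly where the difficulty sits, and handling them also forces you to track the further independent sets the external vertices create (pairs inside $V$, pairs of externals, externals paired with $y$, \dots) and to show each reconnection attempt yields a cycle or leaves something isolated. The paper sidesteps this explosion: it labels the high-degree star $ab,ac,ad,ae$, notes that $a$ is then non-adjacent to all of $b,c,d,e$ while $b$ is joined to $c,d,e$, shows $G$ has a unique nontrivial component $C$ (otherwise an induced $2K_2$ gives a $C_4$), and splits on whether $a\in V(C)$: if not, the independent sets containing $a$ form a separate component of $\Tw(G)$; if so, a short argument via the forbidden subgraphs $2K_2$ and $D_{2,2,2}$ of Proposition~\ref{keq2} forces a cycle or disconnection. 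To make your route work you would need to carry out the deferred case analysis in full; as written it is a plan, not a proof.
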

\begin{proof}
	\begin{itemize}
		\item[($\Leftarrow$)] 
			In the proof of Proposition \ref{forestmerge}, we see that isolated vertices
			are only added when the base case of a star appears as a subproblem.
			Therefore, it suffices to consider only the case $T=K_{1,t}, 1 \le t \le 4$.
			As we have noted, neither $K_{1,3}$ nor $K_{1,4}$ are $\Tw$-graphs.
			It is not hard to see that there is a $G^1$ such that 
$\Tw(G^1) \simeq K_{1,3} + K_1$. 
			However, by adding an extra vertex to $G^1$,
			we can construct a graph $G^2$ such that $\Tw(G^2) \simeq D_{1,3,2}$.
			Furthermore, we can construct a graph $G^3$ by applying $H$-join to two copies of $G^2$ with slightly different vertex-labellings such that $\Tw(G^3)$
			is isomorphic to a $P_7$ with two pendant vertices attached to the midpoint of the path. (See \figurename~\ref{fig:3tree}.)
			Thus, if follows that when $T=K_{1,t}, 1 \le t \le 4$, we can embed it as an induced subgraph of a tree $T^\prime=\Tw(G)$, for some graph $G$ (see Figure \ref{fig:3tree}).
			Our proof of the if direction is complete.
		\item[($\Rightarrow$)] We show that if $T$ is a $k$-ary tree but not a $3$-ary tree for $k \geq 4$ then there does not exist any $\mathsf{TS}_2$-tree $T^\prime$ containing $T$ (as an induced subgraph).
		(By definition, any $k$-ary tree is also a $\ell$-ary tree for $\ell \geq k$.)
		Let $x$ be a vertex of $T$ whose degree is at least five.
		(Since $T$ is a $k$-ary tree but not a $3$-ary tree, such a vertex $x$ exists.)
		
		Suppose to the contrary that $T^\prime$ exists, i.e., there exists a graph $G^\prime$ such that $T^\prime \simeq \mathsf{TS}_2(G^\prime)$ contains $T$.
		Without loss of generality, assume that $x$ is labelled by $ab$, where $\{a, b\}$ is a size-$2$ stable set of $G^\prime$.
		By the pigeonhole principle, we may further assume that three neighbors $x_1$, $x_2$, and $x_3$ of $x$ are labelled $ac$, $ad$, and $ae$, respectively.
		Since $T^\prime$ is a tree, it follows that $cd$, $ce$, and $de$ are respectively the labels of $y_1$, $y_2$, and $y_3$ where $y_i$ is not adjacent to any of $\bigcup_{j}\{x_j\} + x + \bigcup_{j \neq i}\{y_j\}$ for $1 \leq i, j \leq 3$.
		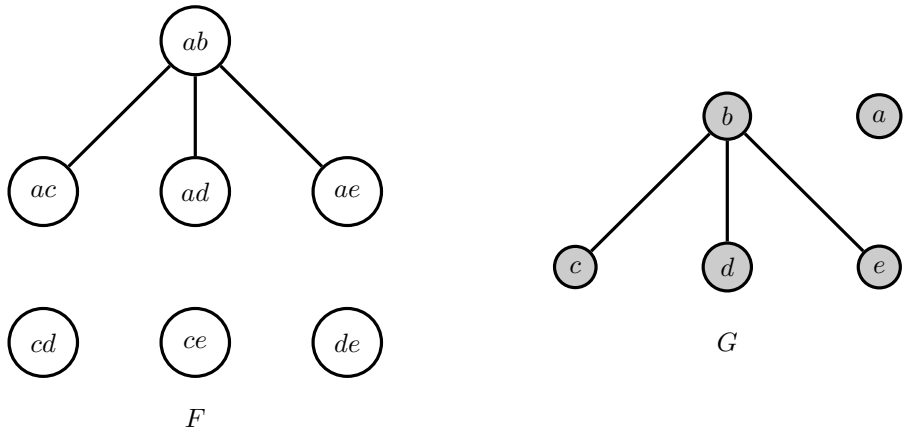
\begin{figure}[!ht]
			\centering
			\begin{adjustbox}{max width=\textwidth}
			\begin{tikzpicture}[every node/.style={draw, circle, very thick, fill=white, minimum width=9mm}]
				\begin{scope}
					\foreach \i/\x/\y in {ab/0/0,ac/-2/-2,ad/0/-2,ae/2/-2,cd/-2/-4,ce/0/-4,de/2/-4} {
						\node (\i) at (\x, \y) {$\i$};
					}
					\draw[very thick] (ab) -- (ac) (ab) -- (ad) (ab) -- (ae);
					\node[rectangle, draw=none, fill=none] at (0,-5) {$F$};
				\end{scope}
				\begin{scope}[shift={(7,-1)}]
					\foreach \i/\x/\y in {a/2/0,b/0/0,c/-2/-2,d/0/-2,e/2/-2} {
						\node[minimum width=5mm, fill=gray!40!white] (\i) at (\x, \y) {$\i$};
					}
					\draw[very thick] (b) -- (c) (b) -- (d) (b) -- (e);
					\node[rectangle, draw=none, fill=none] at (0,-3) {$G$};
				\end{scope}
			\end{tikzpicture}
			\end{adjustbox}
			\caption{The graphs $F$ and $G$ in the proof of Proposition~\ref{prop:no-4tree}.}
			\label{fig:no-4tree}
		\end{figure}
		It follows that $T^\prime$ contains the labelled graph $F \simeq K_{1,3} + 3K_1$ and therefore $G^\prime$ must contain the labelled graph $G \simeq K_{1,3} + K_1$, both described in \figurename~\ref{fig:no-4tree}, as an induced subgraph.
		
		Since $T^\prime \simeq \mathsf{TS}_2(G^\prime)$ is a tree and $G^\prime$ contains $G$, it follows that $G^\prime$ has exactly one non-trivial component $C$ (having more than two vertices) and $C$ contains $G$, otherwise $G^\prime$ must contain an induced $2K_2$ and by Proposition~\ref{keq2} its $\mathsf{TS}_2$-graph is not a tree, a contradiction.
		\begin{itemize}
			\item \textbf{Case~1: $a \in V(C)$.}
			By definition, the distance from $a$ to any of $b, c, d, e$ in $G^\prime$ must be at least two.
			If there is a path of length at least two between $a$ and one of $c, d, e$ not passing through $b$, the graph $G^\prime$ contains a $2K_2$, a contradiction.
			Thus, any path between $a$ and one of $c, d, e$ must go through $b$.
			Moreover, if there is a path of length at least three between $a$ and $b$ not passing through any of $c, d, e$, again the graph $G^\prime$ contains a $2K_2$, a contradiction.
			Since $a \in V(C)$, it follows that $a$ and $b$ must have a common neighbor in $G^\prime$, say $f$.
			Observe that for each $y \in V(C) - \{a,b,c,d,e,f\}$, $y$ must be adjacent to $b$ in $G^\prime$, otherwise $G^\prime$ either contains $2K_2$ or $D_{2,2,2}$ and again by Proposition~\ref{keq2} its $\mathsf{TS}_2$-graph is not a tree, a contradiction.
			However, this implies that $\mathsf{TS}_2(C)$ must be a forest and since $G^\prime$ has exactly one non-trivial component $C$, we have $\mathsf{TS}_2(G^\prime)$ is also a forest, a contradiction.
			
			\item \textbf{Case~2: $a \notin V(C)$.} 
			In this case, there are two types of size-$2$ stable sets of $G^\prime$: those containing $a$ and those do not.
			Since $G^\prime$ contains $G$, each type has at least one member.
			Moreover, since $a$ is isolated (the only non-trivial component is $C$ and $a$ is not in it), no member from one type is adjacent to a member from another type in $\mathsf{TS}_2(G^\prime)$, which means $\mathsf{TS}_2(G^\prime)$ is indeed disconnected, a contradiction.
		\end{itemize}
	
		In the above cases, we proved that some contradiction must happen.
		Our proof is complete.
	\end{itemize}
\end{proof}

Indeed, for $K_{1,n}$, in general we have

\begin{proposition}\label{prop:tree-contain-K1n}
	There exists a $\mathsf{TS}_k$-ary tree $T$ containing $K_{1,n}$ if $n \leq 2k$.
\end{proposition}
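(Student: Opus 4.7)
The plan is to prove the proposition by explicitly constructing, for each $k \geq 2$, a graph $G$ such that $\mathsf{TS}_k(G)$ is a tree containing $K_{1, 2k}$ as an induced subgraph. This suffices because $K_{1, n}$ is an induced subgraph of $K_{1, 2k}$ whenever $n \leq 2k$.

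The construction generalizes the $k=2$ case treated in Proposition~\ref{prop:no-4tree}. First I would build a graph $G^{(k)}$ whose $\mathsf{TS}_k$-graph is a tree containing a $K_{1, k+1}$ centred at some vertex $v$, where exactly two of $v$'s $\mathsf{TS}_k$-neighbours correspond to size-$k$ independent sets whose vertices lie in a designated induced subgraph $H \subseteq G^{(k)}$, and the remaining $k-1$ contain at least one vertex outside $V(H)$. For $k=2$ the graph $G^2$ of Proposition~\ref{prop:no-4tree} (with $\mathsf{TS}_2(G^2) \simeq D_{1,3,2}$) plays this role, with $H$ being the induced path $a\text{-}d\text{-}c\text{-}b$ on four vertices; there $v = ab$ has three $\mathsf{TS}_2$-neighbours $ac,bd,ae$, of which $ac,bd \subseteq V(H)$. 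For general $k$ the analogous $G^{(k)}$ should be obtainable by extending the star-realizing graph from Proposition~\ref{stars} with a path-like tail attached to the central independent set, choosing $H$ to contain that central set together with the two ``inside'' $\mathsf{TS}_k$-neighbours. I would then take a second copy $G^{(k)}_2$ of $G^{(k)}$ under a relabelling that leaves the induced subgraph on $V(H)$ unchanged but places the remaining $k-1$ neighbours of $v$ (those containing vertices outside $V(H)$) at labels disjoint from the first copy---mirroring the trick in Proposition~\ref{prop:no-4tree} where the second copy's analogue of $e$ is attached to $\{a,c\}$ instead of $\{b,d\}$.

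Applying Proposition~\ref{overlap} to the $H$-join $H(G^{(k)}_1, G^{(k)}_2)$, I would then verify that condition~(\ref{star}) fails for every pair of size-$k$ independent sets that lies partly outside $V(H)$: the disjoint-labelling choice ensures that such sets from different copies share at most $k-2$ vertices, so no $k$-crossing occurs. Corollary~\ref{cor:overlap} then yields
\[
\mathsf{TS}_k\!\left( H(G^{(k)}_1, G^{(k)}_2)\right) \;=\; \mathsf{TS}_k(G^{(k)}_1)\cup \mathsf{TS}_k(G^{(k)}_2),
\]
which is a tree in which the shared vertex $v$ inherits all $k+1$ neighbours from each copy, except the two neighbours inside $V(H)$ that appear in both. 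Hence $v$ acquires degree $2(k+1)-2 = 2k$, providing the desired induced $K_{1,2k}$.

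The main obstacle is the explicit construction of $G^{(k)}$ for general $k$ and the precise relabelling of the second copy needed to ensure condition~(\ref{star}) always fails outside $V(H)$. Extending $D_{1,3,2}$ to a higher-$k$ analogue requires attaching longer or wider star/path gadgets around the central independent set, and one must carefully track which of $v$'s $\mathsf{TS}_k$-neighbours lie inside $V(H)$ to control the intersections appearing in~(\ref{star}). Once the correct labelling is identified, the verification that the resulting $\mathsf{TS}_k$-graph is acyclic and connected reduces to a direct application of Corollary~\ref{cor:overlap}, so this combinatorial bookkeeping---rather than any deep new idea---is the crux of the argument.
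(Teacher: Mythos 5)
Your proposal is a plan rather than a proof: the step you yourself flag as ``the main obstacle''---constructing $G^{(k)}$ so that $\mathsf{TS}_k(G^{(k)})$ is a \emph{tree} containing a $K_{1,k+1}$ with exactly two of the centre's neighbours inside a designated $H$---is precisely where all the difficulty of Proposition~\ref{prop:tree-contain-K1n} lives, and it is not supplied. Concretely, if the centre $I$ of the star has $k+1$ neighbours, the pigeonhole principle forces two of them to have the form $I-a_i+x$ and $I-a_i+y$ for the same $a_i$; since $xy\notin E$ (else the tree has a triangle), each set $(I-a_i-a_j)+x+y$ with $j\neq i$ is an independent set of $G^{(k)}$, hence a node of $\mathsf{TS}_k(G^{(k)})$, and one checks it is adjacent to none of $I$, $I-a_i+x$, $I-a_i+y$. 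So a degree-$(k+1)$ centre automatically spawns $k-1$ extra nodes that must be wired into the tree without creating cycles; ``extending the star-realizing graph from Proposition~\ref{stars} with a path-like tail'' does not address this. The paper's proof is organized around exactly this obstruction: it first builds $G^0$ with $\mathsf{TS}_k(G^0)\simeq K_{1,n}+(n-k)(k-1)K_1$, where the isolated nodes are these forced sets, and then performs a sequence of $H$-joins with explicit star gadgets $G^i$ and edge gadgets $G^i_j$ to hang each forced node off the tree. Your two-copy doubling step is a genuinely different top-level idea (it matches what the paper does only in the $k=2$ case of Proposition~\ref{prop:no-4tree}), but it presupposes the hard part.

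There is also a concrete error in the crossing-freeness argument. You claim that relabelling the second copy so that it agrees with the first only on $V(H)$ ensures that any $S_1$ in $G^{(k)}_1$ and $S_2$ in $G^{(k)}_2$ lying partly outside $V(H)$ share at most $k-2$ vertices. This is false in general: take $S_1$ with $k-1$ vertices inside $V(H)$ and one private vertex, and $S_2$ with the \emph{same} $k-1$ vertices of $V(H)$ and one private vertex of the other copy; then $|S_1\cap V(H)|=|S_2\cap V(H)|=|S_1\cap S_2|=k-1$ and condition (\ref{star}) of Proposition~\ref{overlap} holds, so Corollary~\ref{cor:overlap} does not apply and a crossing edge (potentially a cycle) appears. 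Avoiding this requires controlling which $(k-1)$-subsets of $V(H)$ occur inside independent sets of each copy---this is the role of the ``slightly different vertex-labelling'' in the $k=2$ construction of Proposition~\ref{prop:no-4tree}, where the analogue of $e$ in the second copy is reattached precisely so that no two private-vertex sets agree on $k-1=1$ common vertex. Without an explicit $G^{(k)}$ and an explicit relabelling for which these intersections are verified, the argument does not go through.
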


\begin{proof}
	From either \cite{AvisH22} or Proposition~\ref{stars}, the proposition holds for $n \leq k$.
	(Indeed, in this case, $T = K_{1,n}$.)
	Thus, it suffices to consider $k+1 \leq n \leq 2k$.
	For each $i \in \{1, \dots, n-k\}$, let $A_i = \{1, \dots, k\} - i$.
	
	Let $I_k = \{a_1, \dots, a_k\}$ and $B_n = \{b_1, \dots, b_n\}$.
	We construct a graph $G^0$ such that $\mathsf{TS}_k(G^0) \simeq K_{1,n} + (n-k)(k-1)K_1$.
	Let $I_k = \{a_1, \dots, a_k\}$ and $B_n = \{b_1, \dots, b_n\}$.
	Let $V(G) = I_k + B_n$.
	Vertices in $B_n$ form a graph $K_n - M$ where $M$ is the matching that contains $b_ib_{k+i}$ for $1 \leq i \leq n - k$.
	Additionally, for each $i \in \{1, \dots, k\}$, we add an edge in $G^0$ between $a_i$ and both $b_i$ and $b_{k+i}$.
	Observe that $V(\mathsf{TS}_k(G^0))$ consists of $I_k$, the sets $I_k - a_i + b_i$ ($1 \leq i \leq k$), $I_k - a_i + b_{k+i}$ ($1 \leq i \leq n - k$), and $(I_k - a_i + b_i) - a_j + b_{k+i}$ ($1 \leq i \leq n-k$ and $j \in A_i$).
	Moreover, one can verify that the independent sets $(I_k - a_i + b_i) - a_j + b_{k+i}$ are isolated in $\mathsf{TS}_k(G^0)$ and the remaining independent sets form a $K_{1,n}$ in which $I_k$ is adjacent to every other set.
	In short, $G^0$ is indeed our desired graph.
	
	For each $i \in \{1, \dots, n-k\}$, we construct a graph $G^i$ whose $\mathsf{TS}_k$-graph is a star $K_{1,k-1}$ as follows.
	Let $V(G^i) = (I_k - a_i + b_i) + \bigcup_{j \in \{1, \dots, k\} - i}\{c^i_j\}$.
	Vertices in $\bigcup_{j \in A_i}\{c^i_j\}$ form a clique in $G^i$ of size $k-1$.
	We also add an edge in $G^i$ between $a_j$ and $c^i_j$ for each $j \in A_i$. 
	From either~\cite{AvisH22} or Proposition~\ref{stars}, one can verify that $\mathsf{TS}_k(G^i) \simeq K_{1,k-1}$ as desired.
	For each $i \in \{1, \dots, n-k\}$ and $j \in A_i$, we construct a graph $G^i_j$ whose $\mathsf{TS}_k$-graph is a $K_2$ as follows.
	Let $V(G^i_j) = (I_k - a_i + b_i) - a_j + b_{k+i} + c^i_j$.
	The only edge in $G^i_j$ is the one joining $c^i_j$ and $b_{k+i}$.
	From either~\cite{AvisH22} or Proposition~\ref{paths}, one can verify that $\mathsf{TS}_k(G^i_j) \simeq K_2$ as desired. 
	
	\begin{figure}[!ht]
		\centering
		\includegraphics[width=0.7\textwidth]{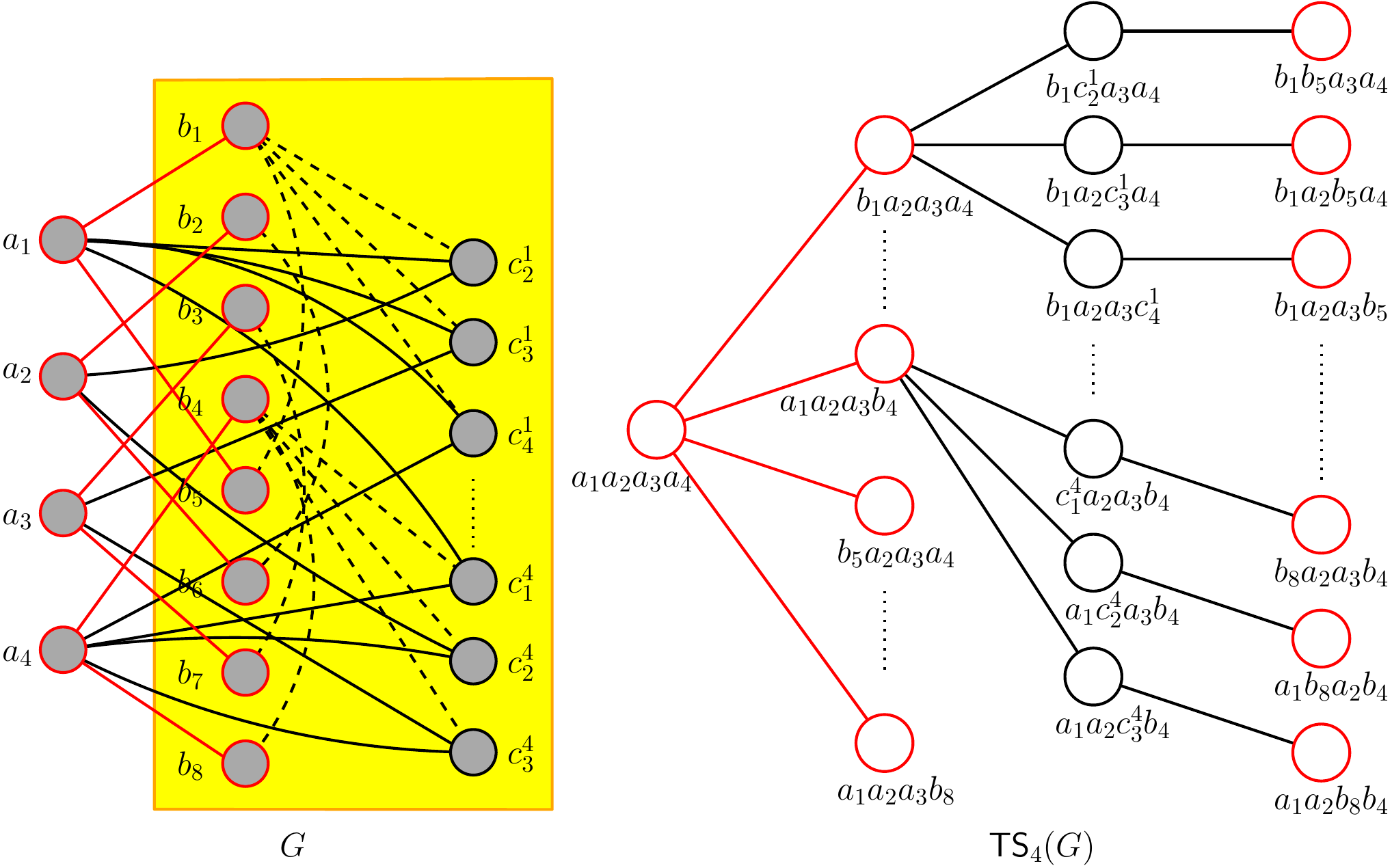}
		\caption{Construction of a graph $G$ such that $\mathsf{TS}_4(G)$ is a tree containing $K_{1,8}$. Vertices of $G$ in the yellow box form a clique having all dashed edges removed. The red induced subgraph of $G$ forms a graph $G^0$ whose $\mathsf{TS}_4(G^0) \simeq K_{1,8} + 12K_1$.}
		\label{fig:K18-in-TS4tree}
	\end{figure}
	
	Now, we construct a graph $G$ whose $\mathsf{TS}_k$-graph is a tree containing $K_{1,n}$ as follows. 
	For convenience, we assume that for each $i \in \{1, \dots, n-k\}$ the set $A_i = \{1, \dots, k\} - i$ can be enumerated as $\{j_1, \dots, j_{k-1}\}$.
	We define $\mathcal{K}^i_{j_0} = G^i$ and $\mathcal{K}^i_{j_p} = H_{j_p}(\mathcal{K}^i_{j_{p-1}}, G^i_{j_p})$ for $j_p \in A_i$ where $H_{j_p}$ is the stable set $(I_k - a_i + b_i) - a_{j_p} + c^i_{j_p}$ for $p \in \{1, \dots, k-1\}$.
	Observe that the graphs $\mathcal{K}^i_{j_{p-1}}$ and $G^i_{j_{p}}$ are $H_{j_p}$-consistent, which implies that $\mathcal{K}^i_{j_p}$ are well-defined.
	Moreover, one can also directly verify that the sets $(I_k - a_i + b_i) - a_j + c^i_j$ and $(I_k - a_{i^\prime} + b_{i^\prime}) - a_{j^\prime} + c^{i^\prime}_{j^\prime}$ always differ in at least two members, which means the condition (\ref{star}) of Proposition~\ref{overlap} is not satisfied.
	In short, for each $i \in \{1, \dots, n-k\}$, we obtain the graph $\mathcal{K}^i_{j_{k-1}}$ whose $\mathsf{TS}_k$-graph is isomorphic to the one obtained from $K_{1,k-1}$ by replacing each edge with a $P_3$.
	Next, we define $\mathcal{K}^0 = G^0$ and $\mathcal{K}^i = H^i(\mathcal{K}^{i-1}, G^{i})$ where $i \in \{1, \dots, n-k\}$ and $H^i$ is the subgraph induced by $(I_k - a_i + b_i) + b_{k+i}$.
	Observe that the graphs $\mathcal{K}^i$ are well-defined because $\mathcal{K}^{i-1}$ and $G^i$ are $H^i$-consistent.
	Moreover, we have $I_k$ and each $(I_k - a_i + b_i) - a_j + c^i_j$ for $1 \leq i \leq n-k$ and $j \in A_i$ always differ in at least two members.
	It follows that the condition (\ref{star}) of Proposition~\ref{overlap} is not satisfied.
	In short, we finally obtain the graph $G = \mathcal{K}^{n-k}$ whose $\mathsf{TS}_k$-graph is indeed a tree containing $K_{1,n}$ as desired.
\end{proof}

Unfortunately, we have not been able to show whether the reverse statement of Proposition~\ref{prop:tree-contain-K1n} also holds.
We conclude this section with the following open problems:

\begin{problem}
For every $k\ge 3$ and tree $T$, is there a graph $G$ such that 
$\Tk(G)$ is a forest containing $T$ as an induced subgraph?
\end{problem}
\begin{problem}
For every $k \ge 3$ and $(k+1)$-ary tree $T$, is there a graph $G$ such that $\Tk(G)$ is a tree
containing $T$ as an induced subgraph?
\end{problem}
\begin{problem}
	Does there exist a $\mathsf{TS}_k$-tree $T$ containing $K_{1,n}$ for $n > 2k$?
\end{problem}

\subsection{$D_{r,n,s}$}
We now consider graphs in the $D_{r,n,s}$ family for whose $\mathsf{TS}_k$-graphs
are trees and show how they can be
constructed by the $H$-join operation.
We remark that when $n = 1$, $D_{r,n,s}$ is nothing but a star $K_{1,r+s}$ and this case was considered in~\cite{AvisH22} and revisited in Proposition~\ref{stars}.
Furthermore, it follows from Proposition~\ref{k-ary tree} that
for $n,k \ge 2$ and $1 \leq r \leq s \le k-1$,
$D_{r,n,s}$ is a $k$-ary tree and so by Proposition~\ref{k-ary tree}
it is a $\Tk$-graph.
The reverse statement does not hold in general: there exists a $\mathsf{TS}_k$-graph $D_{r,n,s}$ even when $s \geq k$.
For example, one of such graphs, as already proved in~\cite{AvisH22}, is $D_{1,3,2}$ ($r = 1$, $s = k = 2$, and $n = 3$).
(See also \figurename~\ref{D132}.)
Indeed, as we will see in Proposition~\ref{prop:D1n2-TS2}, it is the unique $\mathsf{TS}_2$-graph among all trees $D_{1,n,2}$ for $n \geq 1$.
Additionally, for the sake of completeness, we will also show in Proposition~\ref{Dr2s} that the reverse
statement indeed holds when $n = 2$.

We are now characterizing which $D_{1,n,2}$-graphs are $\mathsf{TS}_2$-graphs and show
that this property is non-hereditary for
this simple class of trees. We then consider the $D_{r,2,s}$-graphs characterizing those that
are $\mathsf{TS}_k$-graphs.

Assume for some $G$, $\mathsf{TS}_2(G)$ is a forest containing a $K_{1,3}$. There are four stable sets
in $G$ corresponding to the vertices of the $K_{1,3}$. 
There are two ways of labelling the $K_{1,3}$ but in each case there are five vertices,
say $a, \dots, e$, of $G$ involved.
Up to permutations of the labels, the corresponding
stable sets in $G$ are either $\{ab,ac,bd,ae\}$ or $\{ab,ac,ad,ae\}$.
Using these definitions we have the following lemma.

\begin{lemma}
\label{H}
Let $H$ be the subgraph of $G$ induced by $a, b, \dots, e$.
The edges of $H$ are
\begin{itemize}
\item[(a)] $ad,de,eb,bc,cd$, if the $K_{1,3}$ is labelled $\{ab,ac,bd,ae\}$, or
\item[(b)] $bc,bd,be$ if the $K_{1,3}$ is labelled $\{ab,ac,ad,ae\}$.
\end{itemize}
\end{lemma}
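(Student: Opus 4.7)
The plan is to handle the two cases separately by first pinning down which of the four stable sets is the center of the $K_{1,3}$, then using the token-sliding adjacency rule to force edges and non-edges of $G$ among the five vertices $a,\dots,e$, and finally using the hypothesis that $\mathsf{TS}_2(G)$ is a forest to settle the status of the remaining pairs in $H$.

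For case (a), with stable sets $\{ab,ac,bd,ae\}$, the set $bd$ shares no vertex with either $ac$ or $ae$, so in $\mathsf{TS}_2(G)$ it can be adjacent only to $ab$. Since the $K_{1,3}$ has a unique vertex of degree three, that vertex must be $ab$. The three slides $ab\to ac$, $ab\to bd$, $ab\to ae$ then force $bc,ad,be\in E(G)$. Stability of the four given sets rules out $ab,ac,bd,ae\in E(G)$, and the $K_{1,3}$ being induced (which it must be, since $\mathsf{TS}_2(G)$ is a forest) forbids the only possible extra edge $ac\sim ae$, giving $ce\notin E(G)$. Only the status of $cd$ and $de$ remains. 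If $cd\notin E(G)$, then $\{c,d\}$ is itself a stable set which, via $ad\in E$ and $bc\in E$, is adjacent in $\mathsf{TS}_2(G)$ to both $ac$ and $bd$, producing the $4$-cycle $ab\text{-}ac\text{-}cd\text{-}bd\text{-}ab$. The symmetric argument with $\{d,e\}$ shows $de\in E(G)$. Since $\mathsf{TS}_2(G)$ is a forest, both $cd$ and $de$ must lie in $E(G)$, yielding the claimed edge set $\{ad,de,eb,bc,cd\}$.

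For case (b), with stable sets $\{ab,ac,ad,ae\}$, the four sets are symmetric (each pair shares exactly the vertex $a$), so after permuting labels we may take $ab$ as the center. The three $K_{1,3}$-edges force $bc,bd,be\in E(G)$, while stability forbids $ab,ac,ad,ae\in E(G)$. Finally, each of $cd,ce,de$ must be a non-edge, for otherwise the corresponding pair of leaf sets would be adjacent in $\mathsf{TS}_2(G)$ (e.g.\ $ac\sim ad$ if $cd\in E(G)$), contradicting that the $K_{1,3}$ is induced. This fully determines $H$ with edge set $\{bc,bd,be\}$.

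The only step where real care is needed is case (a), where eliminating the two potential extra stable sets $\{c,d\}$ and $\{d,e\}$ amounts to a small case check of $\mathsf{TS}_2$-adjacencies; this is the place where the forest hypothesis gets used, beyond the immediate "induced $K_{1,3}$" consequence.
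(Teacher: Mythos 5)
Your proof is correct and follows essentially the same route as the paper's: read off the forced edges and non-edges from the $K_{1,3}$ adjacencies and stability, kill $ce$ (resp.\ $cd,ce,de$ in case (b)) via the induced-$K_{1,3}$ requirement, and force $cd,de\in E(G)$ in case (a) by exhibiting the $4$-cycles through the would-be stable sets $\{c,d\}$ and $\{d,e\}$. The only differences are cosmetic: you justify explicitly why $ab$ is the center in case (a), and your $4$-cycle $ab$--$ac$--$cd$--$bd$--$ab$ is in fact the correct reading of the cycle the paper writes (slightly garbled) as ``$ab,bd,cd,ad$''.
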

\begin{figure}[htpb]
	\centering
	\includegraphics[width=0.4\textwidth]{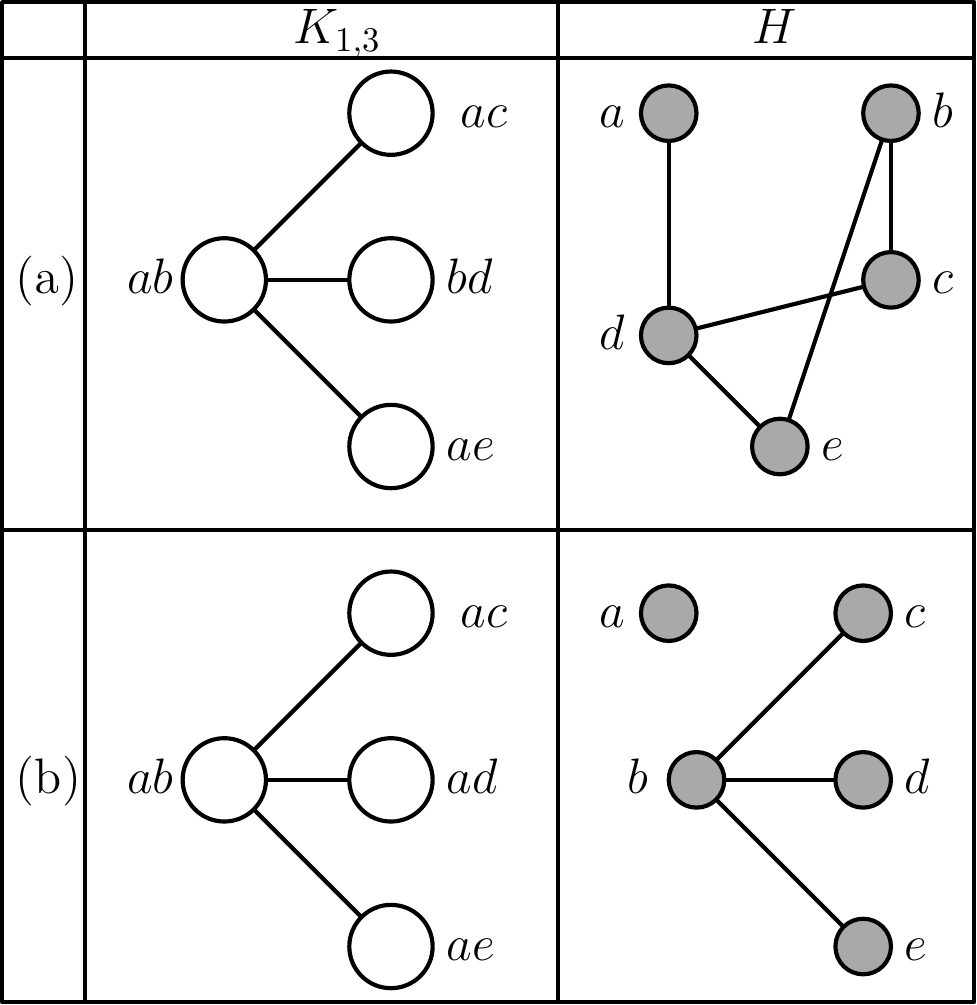}
	\caption{If $\mathsf{TS}_2(G)$ is a forest containing a $K_{1,3}$ then $G$ must contain
	one of the induced subgraphs $H$.}
	\label{fig:label-K13-TS2}
\end{figure}
\begin{proof}
\begin{itemize}
\item[(a)] 
This labelling of $K_{1,3}$ immediately gives edges $ad,bc,be$ and non-edges $ab,ac,ae,bd$. 
That leaves three edges of $H$ to be decided:
\begin{itemize}
\item[(i)] $ce$ must be a non-edge else there is an edge $ae,ac$ in the $K_{1,3}$.
\item[(ii)] $cd$ is an edge else there is a cycle $ab,bd,cd,ad$ in $\mathsf{TS}_2(G)$,
so it is not a tree.
\item[(iii)] $de$ is an edge else there is a cycle $de,bd,ab,ae$ in $\mathsf{TS}_2(G)$.
\end{itemize}
Note that $ce$ must also be a vertex in $\mathsf{TS}_2(G)$.
\item[(b)] 
This labelling of $K_{1,3}$ immediately gives edges $bc,bd,be$ and non-edges $ab,ac,ad,ae$.
There are no other edges in $H$ as $c,d,e$ form a stable set. This implies that
$\mathsf{TS}_2(G)$ must also contain vertices $cd,ce$ and $de$.
\end{itemize}
\end{proof}
Using the lemma we show that precisely one of the $D_{1,n,2}$-graphs is a $\mathsf{TS}_2$-graph,
incidentally proving the non-hereditary property mentioned above for this class of graphs.

\begin{proposition}\label{prop:D1n2-TS2}
	$D_{1,n,2}$ is a $\mathsf{TS}_2$-graph if and only if $n = 3$.
\end{proposition}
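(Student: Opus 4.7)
For the $(\Leftarrow)$ direction, Figure~\ref{D132} already exhibits a graph $G$ with $\mathsf{TS}_2(G) \simeq D_{1,3,2}$, so the ``if'' direction is immediate.

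For $(\Rightarrow)$, suppose $\mathsf{TS}_2(G) \simeq D_{1,n,2}$ for some graph $G$. I would first dispose of the boundary case $n=1$: since $D_{1,1,2} \simeq K_{1,3}$ and (as recalled in Section~\ref{Q2}) $K_{1,t}$ is a $\mathsf{TS}_2$-graph only for $t \le 2$, no such $G$ exists. For $n \ge 2$, the plan is to apply the preceding Lemma~\ref{H} to the unique induced $K_{1,3}$ of $D_{1,n,2}$, centered at $p_n$ with leaves $p_{n-1}, v_1, v_2$, and then analyze the two structural possibilities it provides for the five-vertex induced subgraph $H := G[\{a,b,c,d,e\}]$.

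Before the case analysis, I would compute $\mathsf{TS}_2(H)$ directly in both cases. In case~(a) of the lemma, $\mathsf{TS}_2(H) \simeq K_{1,3} + K_1$, where the isolated vertex is the stable set $ce$; in case~(b), $\mathsf{TS}_2(H) \simeq K_{1,3} + 3K_1$, with isolated vertices $cd, ce, de$. Since $D_{1,n,2}$ is connected, $G$ must have extra vertices outside $H$ that simultaneously (i) connect the isolated stable sets of $\mathsf{TS}_2(H)$ into the central $K_{1,3}$ and (ii) extend the tree to produce the path $p_{n-1}, \dots, p_1, u_1$. My first goal is to rule out case~(b): any extension that connects $cd, ce, de$ to the central $K_{1,3}$ without producing a cycle in $\mathsf{TS}_2(G)$ should force an induced $2K_2$ or $D_{2,2,2}$ in $G$, contradicting Proposition~\ref{keq2}.

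In case~(a), the $c \leftrightarrow e$ symmetry of $H$, together with the requirement that $v_1, v_2$ have degree~$1$ in $\mathsf{TS}_2(G)$, forces $\{v_1, v_2\} \leftrightarrow \{ac, ae\}$ and $p_{n-1} \leftrightarrow bd$. The degree-$1$ condition on $ac$ and $ae$ severely restricts the external neighborhoods of $a$, $c$, $e$ in $G$, and the isolated stable set $ce$ of $\mathsf{TS}_2(H)$ must appear on the path as some $p_i$. The key calculation is then to trace the path back from $bd$ toward $u_1$, enforcing the degree constraints of $D_{1,n,2}$ (degree~$2$ on interior path vertices, degree~$1$ on $u_1$) and the absence of cycles in $\mathsf{TS}_2(G)$ at each step; I expect this to force precisely one additional vertex in $G$ and yield $n = 3$ together with (essentially) the graph of Figure~\ref{D132}. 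The main obstacle is precisely this case~(a) bookkeeping: enumerating the allowed external neighborhoods while simultaneously enforcing the tree property and the forbidden-subgraph conditions from Section~2 is intricate, and most of the work of the proof is likely to concentrate there.
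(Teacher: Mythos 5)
Your overall plan follows the paper's route: both reduce to Lemma~\ref{H} applied to the unique induced $K_{1,3}$ of $D_{1,n,2}$ and then analyze the two possible labellings. The $(\Leftarrow)$ direction and the elimination of $n=1,2$ are fine. But there is a concrete error at the heart of your case~(a) analysis: you claim the degree constraints and the $c\leftrightarrow e$ symmetry force $\{v_1,v_2\}\leftrightarrow\{ac,ae\}$ and $p_{n-1}\leftrightarrow bd$. This is false, and it is contradicted by the unique realization in \figurename~\ref{D132}: there the degree-$2$ neighbour of the centre $ab$ is $ae$ (adjacent onward to $ef$), while the two pendant leaves are $ac$ and $bd$. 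The automorphism of $H$ swapping $c$ and $e$ exchanges $ac$ with $ae$ but fixes $bd$, so it can never be used to decide whether $bd$ or one of $ac,ae$ plays the role of $p_{n-1}$; both positional cases must be analyzed. Tracing the path ``back from $bd$'' therefore starts from the configuration that does \emph{not} occur, and if the bookkeeping were carried out it would either miss the true configuration (failing to prove anything for it) or appear to rule out $n=3$ as well, contradicting your own $(\Leftarrow)$ direction. The paper avoids this trap by splitting instead on the distance in $D_{1,n,2}$ from the forced extra vertex $ce$ to the set $\{ac,bd,ae\}$, which covers all placements of $p_{n-1}$ uniformly.

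Two further points. First, your elimination of case~(b) is only a hope (``should force an induced $2K_2$ or $D_{2,2,2}$ in $G$''): Proposition~\ref{keq2} is stated for trees, and $G$ here is typically not a forest (the graph in \figurename~\ref{D132} contains triangles), so you would have to invoke the general fact that an induced $2K_2$ or $D_{2,2,2}$ forces a cycle in $\mathsf{TS}_2$; more importantly, it is not established that such a subgraph is forced at all. The paper's actual argument in both cases is different and more direct: it shows that any attempt to attach further vertices of $D_{1,n,2}$ creates a forced edge or cycle in $\mathsf{TS}_2(G)$ (e.g., via a vertex $f$ with $af$ forced to be a stable set adjacent to two existing nodes) that is incompatible with $D_{1,n,2}$ being a tree of the prescribed shape. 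Second, essentially all of the $n\ge 4$ work is deferred in your proposal (``I expect this to force\dots''), so even setting aside the misidentification, the proof is not yet there.
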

\begin{proof}
We first consider $1 \leq n \leq 3$ and show that $D_{1,3,2}$ is a $\mathsf{TS}_2$-graph while $D_{1,1,2} = K_{1,3}$ and $D_{1,2,2}$ are not.
(We note that the results for the first two graphs have also been proved in~\cite{AvisH22}.)
According to Lemma \ref{H}, if $D_{1,n,2}$ is a $\mathsf{TS}_2$-graph of some graph $G$, the unique star $K_{1,3}$ in $D_{1,n,2}$ can be labelled in one of two ways.
However, we may immediately eliminate the possibility of the labelling in Lemma \ref{H}(b).
This is because, as pointed out in the proof, there must be additional vertices in $D_{1,n,2} = \mathsf{TS}_2(G)$
labelled $cd,ce$ and $de$ which are non-adjacent since $c,d,e$ form a stable set in $G$.
This implies that $n \ge 6$. So we may assume that if $D_{1,n,2}$ is a $\mathsf{TS}_2$-graph, the $K_{1,3}$
must be labelled as in Lemma \ref{H}(a) with corresponding induced subgraph $H$ of $D_{1,n,2}$. 
From the proof of Lemma \ref{H}(a) there must be an additional vertex $ce$ in 
$D_{1,n,2}$ however this cannot be adjacent to any of the other four vertices.
This implies that $n \ge 3$ and so neither
$D_{1,1,2}$ nor $D_{1,2,2}$ can be  $\mathsf{TS}_2$-graphs. However we may extend $H$ to $G$ by adding a vertex
$f$ adjacent to all vertices except $e$, as illustrated in \figurename~\ref{D132}. 
This introduces the new stable set $ef$ which is adjacent
to both $ae$ and $ce$. Therefore $D_{1,3,2}$ is isomorphic to $\mathsf{TS}_2(G)$. 
We note that $G$ is the unique graph (up to label permutations) for which this is true, due to the
uniqueness of the labelling of $K_{1,3}$.

It remains to consider $n \geq 4$ and show that $D_{1,n,2}$ is not a $\mathsf{TS}_2$-graph.
Suppose to the contrary that there exists a graph $G$ such that $D_{1,n,2} = \mathsf{TS}_2(G)$.
Again, $D_{1,n,2}$ must contain a copy of $K_{1,3}$ with exactly two ways of labelling (up to label permutations) by size-$2$ independent sets of $G$.
\begin{itemize}
\item \textbf{Case 1: $K_{1,3}$ is labelled $\{ab, ac, bd, ae\}$.}
		Since $ac$ and $ae$ are not adjacent, $ce$ must be a vertex of $D_{1,n,2} = \mathsf{TS}_2(G)$.
		We consider the following cases:
		\begin{itemize}
			\item \textbf{Case 1.1: the distance between $ce$ and any vertex of $\{ac, bd, ae\}$ is at least three.}
			Since the roles of $c$ and $e$ are equal, we assume without loss of generality that $ce$ is adjacent to some vertex $cf$.
			Observe that $a$ and $f$ are not adjacent in $G$, otherwise $ac$ and $cf$ are adjacent, which means the distance between $ac$ and $ce$ is two, a contradiction.
			Since $ce$ and $cf$ are adjacent, so are $ae$ and $af$.
			Moreover, $bf$ must be a vertex, otherwise there is an edge between $ab$ and $af$ in $D_{1,n,2} = \mathsf{TS}_2(G)$ which creates a $C_3$ having $\{ab, ae, af\}$ as its vertex-set, a contradiction.
			Since $ab$ and $ac$ are adjacent, so are $cf$ and $bf$.
			Now, $df$ must be a vertex, otherwise $bd$ and $bf$ are adjacent which contradicts $D_{1,n,2} = \mathsf{TS}_2(G)$.
			Since $ab$ and $bd$ are adjacent, so are $af$ and $df$.
			From the proof of Lemma \ref{H}(a)(ii) $c$ and $d$ are adjacent in $G$, so $df$ and $cf$ are adjacent, which again contradicts $D_{1,n,2} = \mathsf{TS}_2(G)$.

		\item \textbf{Case 1.2: the distance between $ce$ and one of $\{ac, bd, ae\}$ is exactly two.} 
			Observe that $bd$ and $ce$ has no common neighbor, otherwise that neighbor must be labelled as one of $\{bc, be, dc, de\}$: the first two can be ignored because $ab$ and $ac$ (resp., $ab$ and $ae$) are adjacent, the last two can be ignored because $ab$ and $bd$ are adjacent.
			Again, since the roles of $c$ and $e$ are equal, we assume without loss of generality that $ae$ and $ce$ has a common neighbor $ef$.
			Since $n \geq 4$, $ce$ must have another neighbor which is different from $ef$, which can be either $cg$ or $eg$ for some vertex $g$ of $G$.
			\begin{itemize}
			\item If it is $cg$ then $ag$ must be a vertex, otherwise $cg$ and $ac$ must be adjacent, which creates a $C_6$ whose vertex-set is $\{ac, ab, ae, ef, ce, cg\}$, a contradiction.
				Since $ce$ and $cg$ are adjacent, so are $ae$ and $ag$, which contradicts $D_{1,n,2} = \mathsf{TS}_2(G)$.

			\item If it is $eg$ then $ag$ must be a vertex, otherwise $eg$ and $ae$ must be adjacent, which creates a $C_4$ whose vertex-set is $\{ae, ef, ce, eg\}$, a contradiction.
			Since $ce$ and $eg$ are adjacent, so are $ag$ and $ac$, which contradicts $D_{1,n,2} = \mathsf{TS}_2(G)$.

			\end{itemize}
		\end{itemize}
		
\item \textbf{Case 2: $K_{1,3}$ is labelled $\{ab, ac, ad, ae\}$.}
	As before, $cd$, $ce$, and $de$ must be vertices in $D_{1,n,2}$.
	Without loss of generality, since the roles of $c, d, e$ are equal, we may assume that only $ae$ is adjacent to another vertex of $D_{1,n,2}$. As shown in the proof of 
	Lemma \ref{H}(b), $D_{1,n,2}$ must also contain vertices $cd,ce,de$.
	Let $P$ be the path between $ae$ and $cd$.
	Since the roles of $c$ and $d$ are equal, we can assume without loss of generality that $cd$ is adjacent to a vertex $cf$ in $P$.
	Observe that if $af$ is not a vertex $ac$ and $cf$ are adjacent contradicting the choice of $ae$. So $af$ is a vertex and
	since $cd$ and $cf$ are adjacent so are $ad$ and $af$, which contradicts $D_{1,n,2} = \mathsf{TS}_2(G)$.
\end{itemize}
\end{proof}
We remark that if we add a vertex $g$ to $G$ in \figurename~\ref{D132} joining it to all
vertices except $d$ the corresponding $\mathsf{TS}_2$-graph is obtained by adding the edge
between $bd$ and $dg$ to $\mathsf{TS}_2(G)$. Note that this tree is not in the class $D_{r,n,s}$.

In the next proposition we consider two arbitrary stars whose centers are connected
by an edge.
\begin{proposition}
\label{Dr2s}
	$D_{r,2,s}$ ($1 \leq r \leq s$) is a $\mathsf{TS}_k$-graph if and only if $s \leq k-1$.
\end{proposition}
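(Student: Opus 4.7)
The proof naturally splits into the two directions. For sufficiency ($s \leq k-1$), the plan is short: since $r \leq s \leq k-1$, the maximum degree of $D_{r,2,s}$ is $s+1 \leq k$, so rooting it at a leaf exhibits $D_{r,2,s}$ as a $(k-1)$-ary tree, and Proposition~\ref{k-ary tree} (applied with $k-1$ in place of $k$) immediately yields that $D_{r,2,s}$ is a $\mathsf{TS}_k$-graph.

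For necessity ($s \geq k$) I plan to argue by contradiction: assume $\mathsf{TS}_k(G) \simeq D_{r,2,s}$ for some $G$ and construct a size-$k$ independent set of $G$ that cannot occupy any vertex position in $D_{r,2,s}$. Let $I$ label the central vertex $p_2$; its $s+1 \geq k+1$ $\mathsf{TS}$-neighbors $J_0, J_1, \dots, J_s$ (with $J_0$ labelling $p_1$ and each $J_i$ for $i \geq 1$ labelling the leaf $v_i$) are each of the form $I - x + y$ with $x \in I$, $y \notin I$, and $xy \in E(G)$. By pigeonhole on the removed-token $x$, some pair of these neighbors, which I will denote $I - x^* + y_a$ and $I - x^* + y_b$ with $y_a \neq y_b$, share a common $x^* \in I$. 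Since $p_2$ together with its $s+1$ neighbors induces a $K_{1,s+1}$ in $\mathsf{TS}_k(G)$, we must have $y_a y_b \notin E(G)$; moreover the independence of $I - x^* + y_a$ and $I - x^* + y_b$ forces both $y_a$ and $y_b$ to have no neighbor in $I - x^*$. Fixing any $z \in I - x^*$ (possible since $k \geq 2$), I introduce the candidate $M := I - \{x^*, z\} + \{y_a, y_b\}$; a short check using the three properties just listed shows $M$ is an independent set of size $k$, hence a vertex of $\mathsf{TS}_k(G) \simeq D_{r,2,s}$, and therefore must coincide with one of $I, J_0, J_1, \dots, J_s, u_1, \dots, u_r$.

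The heart of the argument is then to rule out every option. Since $M \triangle I = \{x^*, z, y_a, y_b\}$ has size $4$, we immediately get $M \neq I$ and $M \neq J_i$ (because $|J_i \triangle I| = 2$). More crucially, the identity $M \triangle J_i = (M \triangle I) \triangle (J_i \triangle I)$ gives $|M \triangle J_i| = 6 - 2\,|\{x_i, y_i\} \cap \{x^*, z, y_a, y_b\}|$, so $M$ can be $\mathsf{TS}_k$-adjacent to $J_i$ only when $\{x_i, y_i\} \subseteq \{x^*, z, y_a, y_b\}$. Among the four formal possibilities, those with $x_i = z$ are incompatible with $x_i y_i \in E(G)$ since $y_a, y_b$ have no neighbor in $I - x^*$, and for the remaining two $J_i \in \{I - x^* + y_a, I - x^* + y_b\}$ the required swap edges ($z y_b$ and $z y_a$ respectively) are absent for the same reason. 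Thus $M$ is $\mathsf{TS}_k$-adjacent to none of $I, J_0, \dots, J_s$, forcing $M = u_i$ for some $i$; but $u_i$'s unique $\mathsf{TS}_k$-neighbor is $J_0 \in \{J_0, \dots, J_s\}$, the desired contradiction. I expect the bookkeeping of the symmetric-difference case analysis to be the main hurdle; the identity above is the simplification that reduces it to a handful of lines.
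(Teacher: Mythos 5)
Your proof is correct and follows essentially the same route as the paper's: the sufficiency direction invokes Proposition~\ref{k-ary tree} exactly as the paper does, and the necessity direction uses the same pigeonhole on the $s+1\ge k+1$ neighbours of the centre followed by the same auxiliary independent set $I-\{x^*,z\}+\{y_a,y_b\}$ (the paper's $(I-a_i-a_j)+x+y$). The only difference is presentational: your symmetric-difference identity rules out all adjacencies of $M$ uniformly, where the paper instead localizes the new vertex to a leaf at $p_1$ and eliminates the four candidate labels for $f(p_1)$ by a short case split.
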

\begin{proof}
	\begin{itemize}
		\item[($\Leftarrow$)] It follows directly from Proposition~\ref{k-ary tree}.
	
		\item[($\Rightarrow$)] Suppose that $D_{r,2,s}$ ($r \leq s$) is obtained from $P_2 = p_1p_2$ by attaching $r$ leaves $u_1, \dots, u_r$ at $p_1$ and $s$ leaves $v_1, \dots, v_s$ at $p_2$ for some $s \geq k$.
		We show that this graph is not a $\mathsf{TS}_k$-graph for any fixed $k \geq 2$.
		Suppose to the contrary that there exists a graph $G$ such that $D_{r,2,s} \simeq \mathsf{TS}_k(G)$, i.e., there exists a bijective mapping $f: V(D_{r,2,s}) \to V(\mathsf{TS}_k(G))$ such that $uv \in E(D_{r,2,s})$ if and only if $f(u)f(v) \in E(\mathsf{TS}_k(G))$.
		Without loss of generality, let $f(p_2) = I = \{a_1, \dots, a_k\}$, where $I$ is a size-$k$ independent set of $G$.
		Since $p_2$ has $s + 1$ neighbors, from the pigeonhole principle, it follows that there must be some $i \in \{1, \dots, k\}$ such that $f(u) = I - a_i + x$ and $f(v) = I - a_i + y$, where $u, v \in N(p_2)$.
		Observe that $J = (I - a_i - a_j) + x + y \notin \{f(p_2), f(u), f(v)\}$ must be a size-$k$ independent set of $G$, where $j \in \{1, \dots, k\} - i$ and therefore there exists $z \in V(D_{r,2,s}) - \{p_2, u, v\}$ such that $f(z) = J$.
		We consider the following cases:
		\begin{itemize}
			\item \textbf{Neither $u$ nor $v$ is $p_1$.}
			In this case, we must have $z \notin N(p_2)$, otherwise it must be adjacent to $p_2$, but then $f(z) = J$ and $f(p_2) = I$ must be adjacent in $\mathsf{TS}_k(G)$, a contradiction.
			It follows that $z \in N(p_1) - p_2$ and thus $f(p_1)$ must be in $\{I - a_i + x, I - a_i + y, I - a_j + x, I - a_j + y\}$.
			Since neither $u$ nor $v$ is $p_1$, the first two can be ignored.
			Now, if $f(p_1) = I - a_j + x$, the vertices $x$ and $a_j$ must be adjacent in $G$, which contradicts the fact that $f(u) \in \mathsf{TS}_k(G)$.
			A similar contradiction can be derived for the case $f(p_1) = I - a_j + y$.
			Thus, $f(p_1)$ cannot be defined.
			\item \textbf{$u$ is $p_1$.} Again, $z \notin N(p_2)$. 
			Thus, $z \in N(p_1) - p_2$, which implies that $y$ and $a_j$ must be adjacent in $G$.
			This contradicts $f(v) \in \mathsf{TS}_k(G)$.
			Thus, $f(z)$ cannot be defined.
		\end{itemize}
	In both cases, we showed that some contradiction must occur.
	Our proof is complete.
	\end{itemize}
\end{proof}

\section{Conclusions}
In this paper, we 
considered two token sliding problems for trees and forests.
The two questions studied seem remarkably complicated, even for 
this simple class of graphs.
For the first question, finding necessary and sufficient conditions on $G$ for $\Tk(G)$ to be a 
forest, we could only get a complete solution for $k=2,3$.
For the second question, finding necessary and sufficient conditions for a tree or forest to
be a token sliding graph, we could get more general results. Nevertheless, as noted in
Section \ref{Q2}
several interesting important questions remain.
We expect the join and decomposition operations introduced there will be of use for
similar questions for more general graphs.

\section*{Acknowledgments}

Avis' research is partially supported by the Japan Society for the Promotion of Science (JSPS) KAKENHI Grants JP18H05291, JP20H00579, and JP20H05965 (AFSA) and Hoang's research by JP20H05964 (AFSA).

\bibliography{refs.bib}
\end{document}